\newcommand{\commentout}[1]{}
\newcommand{\Ind}{\mathrm{Ind}}
\newcommand{\Vol}{\mathrm{Vol}_\R}
\newcommand{\Gr}{\mathrm{Gr}}
\newcommand{\w}{\widetilde}
\renewcommand{\det}{\mathrm{det}}
\newcommand{\diag}{{\rm diag}}
\theoremstyle{plain}
\numberwithin{equation}{section}
\newtheorem{theorem}{Theorem}[section]
\newtheorem{corollary}[theorem]{Corollary}
\newtheorem{lemma}[theorem]{Lemma}
\theoremstyle{definition}
\newtheorem{remark}[theorem]{Remark}
\def\text#1{\;\;\;\;{\rm \hbox{#1}}\;\;\;\;}
\def\qquad{\quad\quad}
\def\msy#1{{\mathbb #1}}
\def\C{{\msy C}}
\def\N{{\msy N}}
\def\H{{\msy H}}
\def\K{{\msy K}}
\def\R{\mathbb{R}}
\def\ga{\alpha}
\def\gb{\beta}
\def\e{\epsilon}
\def\gD{\Delta}
\def\fa{{\mathfrak a}}
\def\fb{{\mathfrak b}}
\def\fg{{\mathfrak g}}
\def\fh{{\mathfrak h}}
\def\fk{{\mathfrak k}}
\def\fl{{\mathfrak l}}
\def\fm{{\mathfrak m}}
\def\fn{{\mathfrak n}}
\def\fp{{\mathfrak p}}
\def\fq{{\mathfrak q}}
\def\fs{{\mathfrak s}}
\def\fz{{\mathfrak z}}
\def\to{\rightarrow}
\def\Re{{\rm Re}\,}
\def\Im{{\rm Im}\,}
\def\LB{\Lambda^+(\cB )}
\def\Ad{{\rm Ad}}
\def\GL{\mathrm{GL}}
\newcommand{\SL}{\mathrm{SL}}
\def\Sp{\mathrm{Sp}}
\def\rM{\mathrm{M}}
\def\rG{\mathrm{G}}
\def\rI{\mathrm{I}}
\newcommand{\rS}{\mathrm{S}}
\newcommand{\rU}{\mathrm{U}}
\newcommand{\rO}{\mathrm{O}}
\def\Id{{\rm I}}
\def\ad{{\rm ad}}
\def\pr{{\rm pr}}
\newcommand{\id}{\mathrm{id}}
\def\cA{{\mathcal A}}
\def\cB{{\mathcal B}}
\def\cF{{\mathcal F}}
\def\cH{{\mathcal H}}
\def\cY{{\mathcal Y}}
\newcommand{\CL}{\mathcal{C}^\lambda}
\newcommand{\wKL}{\widehat{K}_L}
\newcommand{\Cos}{\mathrm{Cos}}
\newcommand{\CosL}{\mathrm{Cos}^{\lambda}}
\newcommand{\SinL}{\mathrm{Sin}^{\lambda}}
\newcommand{\SLa}{\mathcal{S}^\lambda}
\newcommand{\Res}{\mathrm{Res}}
\newcommand{\Kf}{\langle\, \cdot \, ,\, \cdot \, \rangle}
\newcommand{\oN}{\overline{N}}
\newcommand{\oP}{\overline{P}}
\def\rmO{{\rm O}}
\def\rmS{{\rm S}}
\def\rmU{{\rm U}}
\def\Tr{\mathrm{Tr}}
\def\SU{\rmS\rmU}
\def\SO{\rmS\rmO}
\def\sideremark#1{\ifvmode\leavevmode\fi\vadjust{\vbox to0pt{\vss
 \hbox to 0pt{\hskip\hsize\hskip1em%
 \vbox{\hsize2cm\tiny\raggedright\pretolerance10000
 \noindent #1\hfill}\hss}\vbox to8pt{\vfil}\vss}}}
\begin{document}

\title[Intertwining Operators and the cos${}^\lambda$-Transform]{The $\CosL$ and $\SinL$ Transforms\\ as Intertwining Operators between \\ generalized principal series Representations\\ of $\SL (n+1,\K)$}\author{Gestur {\'O}lafsson}
\address{Department of Mathematics, Louisiana State University, Baton Rouge, LA 70803}
\email{olafsson@math.lsu.edu}
\author{Angela Pasquale}
\address{Laboratoire de Math\'ematiques et Applications de Metz (UMR CNRS 7122),
Universit\'e Paul Verlaine Metz, F-57045 Metz, France.}
\email{pasquale@math.univ-metz.fr}
\thanks{The research by {\'O}lafsson was supported by NSF grant DMS-0801010 and the 
University Paul Verlaine of Metz}
\subjclass[2000]{Primary  22E45, 22E46. Secondary 53C35, 43A80}
\keywords{Intertwining operators, representations of semisimple Lie groups, $\CosL$ and $\SinL$-transform, Grassmanian manifolds.}

\begin{abstract}
In this article we connect topics from convex and integral geometry with well known topics in representation theory of semisimple Lie groups by showing that the $\CosL$ and $\SinL$-transforms on the Grassmann manifolds $\Gr_p(\K)=\SU (n+1,\K)/\rS (\rU (p,\K)\times \rU (n+1-p,\K))$ are standard intertwining operators between certain generalized principal series representations induced from a maximal parabolic subgroup $P_p$ of $\SL (n+1,\K)$. The index ${}_p$ indicates the dependence of the parabolic on $p$. The general results of Knapp and Stein and Vogan and Wallach then show that both transforms have meromorphic extension to $\C$ and are invertible for generic $\lambda\in\C$. Furthermore, known methods from representation theory combined with a Selberg type integral allow us to determine the $K$-spectrum of those operators.
\end{abstract}
\maketitle
\section*{Introduction}
\setcounter{section}{0}
\noindent
The $\CosL$ and $\SinL$ transforms have been widely studied during the last few years. One of the reasons is their connection to convex geometry and to some classical integral transforms, like the Funk and Radon transform on the sphere and their generalizations to Grassmann manifolds. We will not discuss the history of these transforms but refer to  \cite{Rubin1998a,Rubin1998b,Rubin2008,Rubin2010} and references therein for further information.

For the sphere $\rS^n$, the $\CosL$ transform $\CL$ is defined on $L^2(\rS^n)$ by
\begin{equation}\label{eq-CosLIntro}
\CL  (f)(\omega )=\int_{\rS^n} |(x,\omega )|^{\lambda - \rho}\, f(x)\, d\sigma (x)
\end{equation}
where $d\sigma$ is the normalized rotational invariant measure on $\rS^n$, $(\cdot,\cdot)$ stands for the usual inner product on $\R^n$ and $\rho =(n+1)/2$. The name $\CosL$ transform comes from the fact that for non-zero $x$ and $y$ the quantity $(x,y)/\|x\|\|y\|$ is the cosine of the angle between $x$ and $y$. It is clear that $\CL$ is in fact an integral transform on $L^2 (\cB)$, where $\cB:=\Gr_1(\R)$ is the Grassmann manifold of lines in $\R^{n+1}$. 

Denote by $\Gr_n(\R)$ the Grassmann manifold of $n$-dimensional subspaces of $\R^{n+1}$.
The $\SinL$-transform $\SLa$ maps functions on $\cB$ to functions on $\Gr_n(\R)$ and is given by
\[\SLa  (f)(\omega) =\CL  (f)(\omega^\perp )\, .\]
Note that $\Gr_n(\R)\simeq \cB$ so that one can also view $\SLa $ as an integral transform on $L^2(\cB)$.

As we will see later, the factor $\rho$ in the exponent of (\ref{eq-CosLIntro}) is chosen so that $\CL$ and $\SLa$ coincide with will known integral transforms from representation theory, i.e., standard intertwining operators between certain principal series representations of $\SL (n+1,\R)$.

We note that $\CL (f)$ exists for all $f\in L^2(\rS^n)$ as long as $\lambda \in \C_\rho:=\{z\in\C\mid \Re (z)\ge \rho\}$ and $\lambda \mapsto 
\CL (f)$ is holomorphic on $\C_\rho^o$, the interior of $\C_\rho$. The transform $\CL : C^\infty (\rS^n)\to C^\infty (\rS^n)$ extends meromorphically to $\C$. One way to determine the singularities of this extension is to determine the $\SO (n+1)$-spectrum of $\CL$. The group $K=\SO (n+1)$ acts naturally on $L^2(\rS^n)$ and, as a representation of $K$,  the space $L^2 (\rS^n)$ decomposes into a direct sum
\[L^2(\rS^n)\simeq_K \bigoplus_{m\in \N_0} \cY^m\]
where
\[\cY^m=\{p|_{\rS^n}\mid p \text{a homogeneous polynomial of degree} m \text{and} \Delta p=0\}\, .\]
Here $\Delta=\partial_1^2+\cdots +\partial_{n+1}^2$ denotes the Laplace operator on $\R^{n+1}$. 
Likewise, by considering $\CosL$ as a transform on $L^2(\cB)$, we have the decomposition
\[L^2(\cB)\simeq_K \bigoplus_{m\in 2\N_0} \cY^m\, .\]
As these are multiplicity-one decompositions and $\CL$ commutes with rotations, it follows from Schur's Lemma that for each $m\in 2\N_0$ there exists a holomorphic  function $\eta_m : \C_\rho \to \C$ such that
\[\CL |_{\cY^m}=\eta_m(\lambda )\id_{\cY^m}\, .\]
Information about $\eta_m $ gives more detailed information about the meromorphic extension of $\CL$ as well as the image and the kernel of the $\CosL$-transform, as clearly both of those spaces are $K$-invariant. 

The functions $\eta_m(\lambda)$ can be for instance determined using Funk-Hecke type formula \cite{Rubin1998a,Rubin1998b}, and one gets for $m\in 2\N_0$
\[\eta_m(\lambda ) = (-1)^{m/2}\frac{\Gamma (\rho)}{\Gamma (1/2)}\, \frac{\Gamma \left (\frac{1}{2}(\lambda -\rho +1)\right)\Gamma \left(\frac{1}{2}(-\lambda +\rho + m)\right)}{\Gamma \left(\frac{1}{2}(\lambda +\rho +m)\right)\Gamma \left(\frac{1}{2}(-\lambda +\rho)\right)}\, .\]
Note that
\[\frac{\Gamma \left(\frac{1}{2}(-\lambda +\rho + m)\right)}{\Gamma \left(\frac{1}{2}(-\lambda +\rho)\right)}=\left\{\begin{matrix} 1 &\text{if} m=0\\
2^{-m/2}(-\lambda +\rho)(-\lambda+\rho+2) \cdots (-\lambda +\rho +m-2) &\text{if}m\in 2\N\end{matrix}\right.\]
is a polynomial of degree $m/2$. 

The first pole is at $\lambda = (n-1)/2$ and the Funk transform
\[\cF (f)(\omega )=\int_{x\cdot \omega =0} f(x)\]
is then given, up to a constant that can be determined, by  $\cF(f)(\omega )=c \Res_{\lambda = (n-1)/2} \CL  (f)(\omega)$.
A similar discussion relates the Radon transform with the singularities of the $\SinL$-transform.

All of these integral transforms have been generalized to the Grassmann manifolds over $\R$, $\C$ and $\H$. We refer to \cite{AB2004,OR06,Rubin2008,Rubin2009,Zhang09} for more details and discussions.

The integral transform in (\ref{eq-CosLIntro}) appeared independently in representation theory of $\SL (n+1,\K)$, where $\K=\R,\C$ or $\H$, around the same time as the $\CosL$-transform. The real case was studied in \cite{DM1999}, the complex case in \cite{DZ1997}, and finally the quaternionic case in \cite{P99}. In these articles it was shown that the integral (\ref{eq-CosLIntro}) defines an intertwining operator between generalized principal series representations induced from a maximal parabolic subgroup in $\SL (n+1,\K)$. The $K$-spectrum was determined. Among the applications was the composition series, some embeddings of the complementary series and the study of the so-called canonical representations on rank-one Riemannian symmetric spaces of the noncompact type, \cite{DHjfa,DH,DP}. The connections to convex geometry and to the Funk and Radon transforms was neither discussed nor mentioned.

Let $\K=\R,\C$ or $\H$ and let $\Gr_p(\K)=\SU (n+1,\K)/\rS (\rU (p,\K)\times \rU (q,\K))$, with  $q=n+1-p$, be the Grasmmann manifold of $p$-dimensional subspaces of $\K^{n+1}$.
Our main aim in this article is to show that the $\CosL$ and $\SinL$-transforms for $\Gr_p(\K)$ are always canonical intertwining operators between generalized  principal series representations induced from maximal parabolic subgroups of $\SL (n+1,\K)$. We then use the spectrum generating method developed in \cite{BOO} to determine the $K$-spectrum of the $\CosL$ and $\SinL$-transform. 
 
The article is organized as follows. In Section \ref{Section1} we recall needed notations and fact on simple Lie groups and Lie algebras. In Section \ref{section2} we introduce the generalized spherical principal series representations. These are representations induced from a character $\chi_\lambda$ of a parabolic subgroup $P=MAN$. The elements of the representation space can be viewed as $L^2$ sections of the $G$-homogeneous line bundle over $\cB=G/P$ corresponding to $\chi_\lambda$. We recall the results by Knapp and Stein and Vogan and Wallach on the meromorphic continuation of standard intertwining operators $J(\lambda)$ between spherical generalized principal series representations, \cite{KnappStein1,KnappStein2,VoganWallach}. 
The material of this section is presented in the general setting of semisimple Lie groups rather than for the special case considered later in this article. Indeed, the analytic properties of the intertwining operators which we discuss here possess a rather unified structure for the different semisimple Lie groups. On the other hand, we will associate a geometric meaning to these operators only for a specific class of semisimple Lie groups. It is therefore a natural question whether one can provide a geometric meaning to the standard intertwining operators for other groups or, more generally, to intertwining operators between generalized principal series representations induced from other parabolic subgroups. We expect that the answer will be positive and that higher rank $\CosL$-transforms are in fact given by such intertwining operators.

In Section \ref{section3} we recall the method from \cite{BOO} on how to find the eigenvalues of the intertwining operators on each of the $K$-types in case $P$ is a maximal parabolic subgroup and the $K$-types have multiplicity one. As the
situation considered in this article is less general then the one in \cite{BOO}, the results and their proofs are simpler. The principal result is Theorem \ref{th-3.6}, which provides us with an inductive way to compute the spectrum.

We specialize to the case of the Grassmann manifolds $\Gr_p(\K)$ in Section \ref{se-Grassman}. We introduce the $\CosL$-transform $\CL$ and 
show that $\CL =J(\lambda)$. This main result has several applications. Using the well-known fact that the generalized principal series are irreducible for almost all $\lambda\in \C$, we obtain that there exists a meromorphic function $c_P(\lambda )=J(\lambda )1=\CL  (1)$ such that $\mathcal{C}^{-\lambda}\circ \CL = \CL\circ \mathcal{C}^{-\lambda }=c_P(-\lambda ) c_P (\lambda )\id$ for almost all $\lambda$. 
Moreover, together with the method from \cite{BOO} outlined in Section \ref{section3}, the equality  $\CL =J(\lambda)$ allows us to calculate the $K$-spectrum of $\CL$. This computation is done in Section \ref{sec:Kspectrum}. The main result is Theorem \ref{th:etaMu}.  Finally, a similar discussion about the $\SinL$-transform is then carried out in Theorem \ref{th:KspectrumSL}.

\section{Semisimple Lie Groups and Parabolic Subgroups}\label{Section1}
\noindent
In this section we introduce the notation and basic facts on semisimple Lie groups, Lie algebras and parabolic subgroups.   The material can be found in any standard textbook on semisimple
Lie group, e.g. \cite{Knapp}.

{}From now on $G$ will stand for a noncompact connected semisimple Lie group with finite
center.  The Lie algebra of $G$ is denoted by $\fg$. In general Lie groups will be denoted by upper case Latin letters and their Lie algebras by the corresponding German letter.

An involution $\theta :G \to G$ is a Cartan involution if $K=G^\theta=\{a\in G\mid \theta (a)=a\}$ is a maximal compact subgroup of $G$. In that case $K$ is connected. From now on $\theta$ will denote a fixed Cartan involution on $G$ and $K$ the corresponding maximal compact subgroup. Our standard example will be $G=\SL (n+1,\K)$, $\K=\R$, $\C$, or $\H$, and $\theta (x)=(\overline{x}^t)^{-1}$ where $\bar{\hbox to 0.5em{}}$ denotes the canonical conjugation in $\K$ which fixes $\R$. In this case $K=\{x\in G\mid \overline{x}^t=x\}$. The general structure theory discussed in this section is specialized to this case in Section \ref{se-Grassman}.

The derived involution
$\dot{\theta} :\fg\to \fg$ and its complex linear extension to
$\fg_\C=\fg\otimes_\R \C$ will for simplicity also be denoted by $\theta$. Then $\fg=\fk\oplus\fs$ where
$\fk=\fg^\theta$ is the Lie algebra of $K$ and $\fs=\{X\in\fg\mid \theta (X)=-X\}$. We have $[\fs,\fs]\subseteq \fk$,  $[\fk,\fs]\subseteq \fs$, and $\Ad (K)\fs =\fs$. We denote the representation of $K$ on $\fs_\C$ by $\Ad_\fs$.

Denote by $\Kf$ the Killing form on $\fg$, given by $\langle X,Y\rangle=\Tr (\ad (X)\ad (Y))$. Then
\[(X,Y)\mapsto \langle X,Y\rangle_\theta:=\langle X, \theta (Y)\rangle\]
is an inner product on $\fg$. If $X\in \fg$
then $\ad (X)^*=-\ad (\theta (X))$. In particular, if $X\in\fs$ then
$\ad  (X )$ is a symmetric operator and hence diagonalizable over the reals. Let
$\fa\subset\fs$ be abelian. For $\alpha \in \fa^*$ let
\[\fg_\alpha =\{X\in \fg\mid (\forall H\in\fa)\, [H,X]=\alpha (H) X\}\]
and let
\[\fm=\{X\in \fg \mid (\forall H\in\fa)\, [H,X]=0\text{ and } X\perp \fa\}\, .\]
Then $\fm\oplus \fa=\fg_0$. We will always assume that the 
\[\fa =\fz (\fm)\cap \fs=\{X\in \fm\oplus \fa\mid [X,\fm\oplus \fa]=\{0\}\}\, .\]

Let $\Delta = \Delta (\fg,\fa):=\{\alpha \in \fa^*\setminus \{0\}\mid \fg_\alpha\not=\{0\}\}$.
The elements in $\Delta$ are called roots. We have
\[\fg=\fm \oplus \fa\oplus \bigoplus_{\alpha \in\Delta}\fg_\alpha\, .\]
As $[\fg_\alpha,\fg_\beta]\subseteq \fg_{\ga +\gb}$ for $\alpha,\beta\in\Delta\cup \{0\}$
it follows that $[\fm\oplus \fa,\fg_\ga]\subseteq \fg_\ga$
for all $\ga\in\gD$.

Let $\fa_r:=\{H\in\fa\mid (\forall \alpha \in \gD)\, \alpha (H)\not=0\}$. For a fixed $H\in\fa_r$
let $\gD^+:=\{\alpha \in\gD\mid \ga (H)>0\}$. Then $\gD^+$ is system of positive roots, i.e., $\Delta = \Delta^+\cup -\Delta^+$, $\Delta^+\cap -\Delta^+ =\emptyset$, and $(\Delta^+ + \Delta^+)\cap \Delta \subset \Delta^+$. In particular, 
\[\fn:=\bigoplus_{\ga\in\gD^+}\fg_\alpha\]
is a nilpotent subalgebra of $\fg$ normalized by the Lie algebra
$\fp:=\fm\oplus \fa\oplus \fn$.
In fact
\[\fp=\fn_{\fg}(\fn)=\{X\in \fg\mid [X,\fn]\subseteq \fn\}\, .\]
$\fp$ is a parabolic subalgebra of $\fg$. It is maximal, if $\dim \fa=1$.

Let $P=N_G (\fp)=\{g\in G\mid \Ad (g)\fp \subset \fp\}$. Then $P$ is a closed subgroup of $G$ with
Lie algebra $\fp$. Let $A=\exp \fa$ respectively $N= \exp \fn$ denote the analytic
subgroup of $G$ with Lie algebra $\fa$ respectively $\fn$. Then $A$ and $N$ are closed, $A$ is
abelian and $N$ is nilpotent. Let $M_o$ denote the analytic subgroup of
$G$ with Lie algebra $\fm$ and $M:=Z_K(A)M_o$. Then $M$ is a closed subgroup of $G$ with finitely many
connected components, $Z_G (A)=MA$ and  the product map $M\times A\times N\to P$,
$(m,a,n)\mapsto man$, is an analytic diffeomorphism. Let $L=K\cap M$ and $\cB := K/L$. Then
\begin{equation}\label{G=KP}
G= KP\quad \text{and}\quad K\cap P=L\, .
\end{equation}
Thus $\cB =G/P$ and $G$ acts on $\cB$. To describe this action, write
$g=k_P(g)m_P(g)a_P(g)n_P(g)$ with $(k_P(g),m_P(g),a_P(g),n_P(g))\in K\times M\times A\times N$. If the role of $P$ is clear, then we leave out the index ${}_P$. The map $G\ni g\mapsto (a(g),n(g))\in A\times N$ is analytic and $g\mapsto a(g)$ is right $MN$-invariant. We can
therefore view $a(\, \cdot \, )$ as a map $G/MN\to A$.
The elements $k(g)$ and $m(g)$ are not uniquely defined.
However, the maps
\[g\mapsto k(g)L\in \cB \text{ and }
g\mapsto L m(g)\in L\backslash M\]
are well defined and analytic. 
Let $b_o=eL$. Then we can describe the $G$ action on $\cB$ by $g\cdot (k\cdot b_o) =k(gk)\cdot b_o$.
If $b=k\cdot b_o\in\cB$ and $g\in G$, then we set $a(g b):= a(gk)$.

Let $\oN:= \theta (N)$. The Lie algebra of $\oN$ is $\bar \fn=\theta (\fn)=\bigoplus_{\ga\in -\gD^+}
\fg_\alpha$. Furthermore, $\fg=\bar\fn\oplus \fm\oplus \fa\oplus \fn=\bar\fn \oplus \fp$, and
the map $\oN\times M\times A\times N\to G$, $(\bar n,m, a,n)\mapsto \bar n man$, is
an analytic diffeomorphism onto an open and dense subset $\oN P$ of $G$ of full measure.

Let $\oP = MA\oN=\theta (P)$. Then as a $K$-manifold $\cB = G/\oP$ but the $G$-action in general is different. Denote this action for the moment by $\,\bar{\cdot}\,$. 

\begin{lemma}\label{le-BarAction} 
Let $g\in G$ and  $b\in \cB$. Then $a_{\oP} (gb) = a_P (\theta (g)b)^{-1}$, $k_{\oP} (gb)= k_{P} (\theta ( g)b)$ and $g\,\bar{\cdot}\, b=\theta(g)\cdot b$.
\end{lemma}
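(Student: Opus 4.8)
The plan is to derive all three identities from one computation: apply the Cartan involution $\theta$ to the factorization $g=k_P(g)\,m_P(g)\,a_P(g)\,n_P(g)$ and recognize the result as the corresponding factorization relative to $\oP=MA\oN$. The ingredients are the $\theta$-behavior of the four factors. Since $K=G^\theta$, we have $\theta(k)=k$ for $k\in K$; since $\fa\subseteq\fs$, we have $\theta(a)=a^{-1}$ for $a\in A$; and $\oN=\theta(N)$ by definition. Finally $M=Z_K(A)M_o$ is $\theta$-stable: $Z_K(A)\subseteq K$ is fixed pointwise, while $\fm$, being the orthogonal complement of the $\theta$-stable subspace $\fa$ inside the $\theta$-stable subspace $\fg_0=\fm\oplus\fa$ with respect to the $\theta$-invariant Killing form, is itself $\theta$-stable, so $\theta(M_o)=M_o$. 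Consequently $\oP=\theta(P)$ factors as $MA\oN$ in that order, $G=\theta(KP)=K\oP$, and $K\cap\oP=\theta(L)=L$; thus $G/\oP=K/L=\cB$ as a $K$-space, and $a_{\oP}(\cdot)$, $k_{\oP}(\cdot)$ are defined exactly as their $P$-counterparts, with global (not merely generic) validity by \eqref{G=KP}.

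Applying $\theta$ to the factorization then gives
\[
\theta(g)=k_P(g)\,\theta(m_P(g))\,a_P(g)^{-1}\,\theta(n_P(g)),
\]
where $k_P(g)\in K$, $\theta(m_P(g))\in M$, $a_P(g)^{-1}\in A$ and $\theta(n_P(g))\in\oN$. This is precisely the $K\cdot M\cdot A\cdot\oN$ decomposition of $\theta(g)$. I would then read off the $A$- and $K$-parts, noting that the $A$-part is unambiguous (the residual $L$-ambiguity $g=(k\ell^{-1})(\ell p)$ with $\ell\in L\subseteq M$ leaves the $A$-component of $p$ unchanged) while the $K$-part is well defined modulo right multiplication by $L$, hence as an element of $\cB$. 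This yields, for every $h\in G$,
\[
a_{\oP}(\theta(h))=a_P(h)^{-1},\qquad k_{\oP}(\theta(h))\,L=k_P(h)\,L .
\]

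To pass from group elements to points of $\cB$, I would write $b=k\cdot b_o$ with $k\in K$, recall the conventions $a_{\oP}(gb):=a_{\oP}(gk)$ and $k_{\oP}(gb):=k_{\oP}(gk)$ (and likewise for $P$), and apply the two boxed identities with $h:=\theta(g)k$, for which $\theta(h)=\theta(\theta(g)k)=g\,\theta(k)=gk$ since $\theta(k)=k$. This gives $a_{\oP}(gb)=a_{\oP}(gk)=a_P(\theta(g)k)^{-1}=a_P(\theta(g)b)^{-1}$, the first assertion, and $k_{\oP}(gb)=k_P(\theta(g)b)$ in $\cB$, the second. The third then drops out from the descriptions of the two actions:
\[
g\,\bar{\cdot}\,b=k_{\oP}(gk)\cdot b_o=k_P(\theta(g)k)\cdot b_o=\theta(g)\cdot b .
\]

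The computation is routine; the only delicate points are the bookkeeping of the $L$-ambiguity—so that the $k$-identities are understood as equalities in $\cB$ rather than in $K$—and the verification that $\oP$ genuinely factors as $MA\oN$ in the same order used to define $a_{\oP},k_{\oP}$, which is exactly what the $\theta$-stability of $M$ and $A$ guarantees. Once these are settled, the three assertions follow simultaneously from the single displayed factorization of $\theta(g)$.
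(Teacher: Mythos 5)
Your proof is correct and takes essentially the same route as the paper: the paper also applies $\theta$ to the $P$-decomposition, writing $gk=\theta(\theta(g)k)$ and reading off the resulting $K\cdot M\cdot A\cdot\oN$ factorization using the $\theta$-stability of $M$, $\theta(a)=a^{-1}$ on $A$, and $\theta(N)=\oN$. Your version merely isolates the identity $a_{\oP}(\theta(h))=a_P(h)^{-1}$, $k_{\oP}(\theta(h))L=k_P(h)L$ for general $h$ before substituting $h=\theta(g)k$, and spells out the well-definedness bookkeeping (that $\oP=MA\oN$, $G=K\oP$, $K\cap\oP=L$, and the $L$-ambiguity) that the paper leaves implicit.
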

\begin{proof} Let $k\in K$ be such that $b=k\cdot b_o$. Then $g\,\bar{\cdot}\, b=k_{\oP}(gk)L$. We have
\begin{eqnarray*} gk &=&\theta (\theta (g)k)\\
&=&\theta (k_P (\theta (g)k)m_P(\theta (g)k)a_P(\theta (g)k)n_P(\theta (g)k)) \\
 &=&k_P (\theta (g)k)\theta( m_P(\theta (g)k))a_P(\theta (g)k)^{-1}\theta (n_P(\theta (g)k))\, .
 \end{eqnarray*}
 The Lemma now follows as $M$ is $\theta$-stable and $\theta (n_P(\theta (g)k))\in \oN$.
\end{proof}

We note for later use that the the above argument shows that
\begin{equation}\label{eq-APbar}
a_{\oP}(gk)=a_P(\theta (g)k)^{-1}\, .
\end{equation}

We normalize the invariant measure on $\cB$ and compact groups so that the total measure is one. If
$f\in C(\cB)$ then
\[\int_\cB f(b)\, db=\int_K f(k\cdot b_o)\, dk\, .\]

For $\lambda\in\fa_\C^*$ define a homomorphism $\chi_\lambda : P\to \C$ by
\[\chi_\lambda (m\exp H n)=e^{\lambda (H)}\, ,\quad m\in M,\, H\in \fa\, , \text{and}\, n\in N\, .\]
We will also use the shorthand notation $p^\lambda $ for $\chi_\lambda (p)$. For
$\alpha \in \gD$ let $m_\alpha :=\dim \fg_\alpha$ and let
$\rho_P : =\frac{1}{2}\sum_{\alpha\in\gD^+}m_\alpha \alpha
\in\fa^*$. As $P$ is fixed most of the time we simply write
$\rho$ instead of $\rho_P$. We have $p^{2\rho}=|\det \Ad (p)|_{\fn}|$.

Normalize the Haar measure $d\bar n$ on $\oN$ such that
\[\int_{\oN} a(\bar n)^{-2 \rho } \, d\bar n = 1\]
and let $dn=\theta (d\bar n)$. We normalize the invariant measure on $A$ respectively $\fa$ such that the inversion formula for the abelian Fourier transform holds without additional constants, i.e., if $f\in C^\infty_c(A)$ and $\widehat{f}(\lambda )=\int_{A} f(a)a^{-\lambda}\, da$, then $f(a) =\int_{\fa^*} \widehat{f}(i\lambda )a^{i\lambda }\, d\lambda$. 

\begin{lemma} Let the notation be as above.
\begin{enumerate}
\item We can normalize the Haar measure on $G$ and $M$ such that for all $f\in L^1(G)$
\begin{eqnarray*}
\int_G f(x)\, dx & =&\int_K\int_M\int_A \int_N f(kman)a^{2\rho }\, dkdmdadn\\
&=&\int_{\oN}\int_M \int_A \int_N f(\bar n man)a^{2\rho}\, d\bar ndmdadn\, .
\end{eqnarray*}
\item The set $\oN \cdot b_o\subset \cB $ is open, dense and of full measure. If $f\in L^1(\cB)$ then
\begin{equation}\label{intKNbar}
\int_\cB f(b)\, db=\int_{\oN} f(\bar n\cdot b_o)\, a (\bar n)^{-2\rho }\, d\bar n\, .
\end{equation}
\item Let $f\in L^1(\cB)$ and $x\in G$. Then
\begin{equation}\label{eq-RadonNikonB}
\int_{\cB}  f(x\cdot b)\, a(xb)^{-2\rho} \, db=\int_\cB f(b)\, db\, .
\end{equation}
\end{enumerate}
\end{lemma}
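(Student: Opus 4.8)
The three assertions are the standard integration formulas attached to the parabolic $P=MAN$, and the plan is to establish (1) first and then derive (2) and (3) from it. For (1) I would obtain the two displayed formulas from the structure already recorded above rather than reproving an Iwasawa decomposition. The $\oN MAN$-formula comes from the analytic diffeomorphism $\oN\times M\times A\times N\to\oN P$ onto the full-measure open set $\oN P$: transporting the Haar measure of $G$ through this chart yields the modular factor of $P$, which by the identity $p^{2\rho}=|\det\Ad(p)|_{\fn}|$ recorded above equals $a^{2\rho}$. The $KMAN$-formula I would get from $G=KP$ and $K\cap P=L$; it is the same Haar measure written in $KMAN$-coordinates, the $L$-redundancy $K\cap M=L$ being absorbed into the normalizations, the factor $a^{2\rho}$ again being the modular contribution, and the overall constant pinned down by setting all total masses equal to one.

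For (2), the openness and density of $\oN\cdot b_o$ are immediate from those of $\oN P$ under the open projection $G\to\cB$, and its full measure follows from the integral identity below (take $\phi\equiv1$ and use $\int_{\oN}a(\bar n)^{-2\rho}\,d\bar n=1$). To prove that identity I would feed a single, honestly $G$-defined test function
\[
f(g)=\phi(g\cdot b_o)\,u(m(g))\,h(a(g))\,w(n(g))
\]
into \emph{both} forms of (1). Here $u$ is taken left-$L$-invariant, so that $f$ is well defined in spite of the $L$-ambiguity of $m(g)$ (the quantities $a(g)$, $n(g)$ and $g\cdot b_o$ being unambiguous), and $u,h,w$ are normalized by $\int_M u\,dm=\int_A h(a)\,a^{2\rho}\,da=\int_N w\,dn=1$. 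The $KMAN$-formula collapses the $M,A,N$-integrals and returns $\int_K\phi(k\cdot b_o)\,dk=\int_\cB\phi\,db$. Evaluating the same $f$ with the $\oN MAN$-formula, I write $g=\bar n man$, use $a(\bar n man)=a(\bar n)a$ and $m(\bar n man)=m(\bar n)m$ modulo $L$, and integrate out $m,a,n$: the $N$- and $M$-integrals collapse to one by left-invariance, while the substitution $a\mapsto a(\bar n)^{-1}a$ turns the modular weight $a^{2\rho}$ into $a(\bar n)^{-2\rho}$. Comparing the two evaluations yields (\ref{intKNbar}).

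For (3) I would first record the cocycle identity
\[
a\big((g_1g_2)\cdot b\big)=a\big(g_1\cdot(g_2\cdot b)\big)\,a(g_2\cdot b),
\]
which follows from $a(xp)=a(x)a(p)$ for $p\in P$ (a consequence of $M$ centralizing $A$ and $A$ normalizing $N$) together with $a(x\cdot b)=a(xk)$ for $b=k\cdot b_o$. Combining this with (2) rewrites the left-hand side of (\ref{eq-RadonNikonB}) as $\int_{\oN}F(x\bar n)\,d\bar n$, where $F(g):=f(g\cdot b_o)\,a(g)^{-2\rho}$ satisfies $F(gman)=F(g)\,a^{-2\rho}$, while the right-hand side equals $\int_{\oN}F(\bar n)\,d\bar n$. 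Thus (3) reduces to the invariance
\[
\int_{\oN}F(x\bar n)\,d\bar n=\int_{\oN}F(\bar n)\,d\bar n\qquad(x\in G).
\]
I would prove this by multiplying $F$ by a normalized cutoff $\Theta(\bar n man)=\beta(m)\gamma(a)\delta(n)$ to form $\Phi\in L^1(G)$ with $\int_G\Phi=\int_{\oN}F\,d\bar n$ (the factors $a^{-2\rho}$ and $a^{2\rho}$ cancelling in the $\oN MAN$-formula), then applying left-invariance of the Haar measure to $\int_G\Phi(xg)\,dg$ and re-expanding $x\bar n=\bar n^\sharp m^\sharp a^\sharp n^\sharp$ in the $\oN MAN$-chart; the inner $M,A,N$-integrals decouple and collapse, leaving $\int_{\oN}F(\bar n^\sharp)\,(a^\sharp)^{-2\rho}\,d\bar n=\int_{\oN}F(x\bar n)\,d\bar n$ by the covariance of $F$.

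The main obstacle throughout is the Jacobian bookkeeping in the two reorganizations of $\bar n man$ (respectively $x\bar n man$) into $\oN MAN$-coordinates. One must verify that, after the substitution in the $A$-variable, every contribution except the single modular power $a^{2\rho}=|\det\Ad(a)|_{\fn}|$ cancels, and in particular that the $N$-integral is left unchanged by the translations of the $N$-variable by elements depending on $m,a,\bar n$ that appear when the decomposition is reordered. These steps are elementary but are precisely where the chosen normalizations and the non-unimodularity of $P$ must be tracked with care.
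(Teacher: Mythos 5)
Your proposal is correct, but it is worth noting that the paper does not actually prove this lemma: its entire proof is the citation ``See \cite{Knapp}, pp.~137--140 and p.~170, or \cite{O87}, \S 7.'' What you have written is in essence a self-contained reconstruction of precisely the standard arguments being outsourced there, and the bookkeeping you flag as the main obstacle does close. In (2), writing $\bar n\, m a n' = k(\bar n)\,\bigl(m(\bar n)m\bigr)\,\bigl(a(\bar n)a\bigr)\,\bigl(((ma)^{-1}n(\bar n)(ma))\,n'\bigr)$ shows that the shifts in the $M$- and $N$-variables are left translations (harmless, since $M$ and $N$ are unimodular and $MA$ normalizes $N$), while the shift $a\mapsto a(\bar n)^{-1}a$ in the $A$-variable extracts exactly the factor $a(\bar n)^{-2\rho}$ from the weight $a^{2\rho}$; your left-$L$-invariance requirement on $u$ is exactly what makes the test function well defined despite the ambiguity of $m(g)$. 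Similarly, in (3) the reduction to the covariance $F(gman)=F(g)a^{-2\rho}$, followed by the cutoff $\Theta$ and left invariance of Haar measure, is the standard proof of quasi-invariance: after $x\bar n=\bar n^\sharp m^\sharp a^\sharp n^\sharp$ the substitution $a\mapsto (a^\sharp)^{-1}a$ leaves $\int_A\gamma\,da$ intact and produces $(a^\sharp)^{-2\rho}$, i.e.\ $F(x\bar n)$. Two small repairs. First, in (2) taking $\phi\equiv 1$ only yields $1=1$; to get that $\oN\cdot b_o$ has full measure, take instead $\phi=\mathbf{1}_{\oN\cdot b_o}$ (or the indicator of its complement), for which the right-hand side of (\ref{intKNbar}) is $\int_{\oN}a(\bar n)^{-2\rho}\,d\bar n=1$ (respectively $0$). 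Second, in (3) you should record that $x\bar n\in\oN P$ for almost every $\bar n$ before expanding in the $\oN MAN$-chart: this holds because $\oN P$ is right-$P$-invariant and of full measure, and left translation by $x^{-1}$ preserves null sets. Finally, the $KMAN$-formula in (1) is the one ingredient you quote rather than derive (the $L$-fibration of $K\times M\times A\times N$ over $G$); that is a fair division of labor, since the lemma's statement explicitly grants the freedom to normalize $dx$ and $dm$, and the paper itself cites \cite{Knapp} for exactly this point. With these remarks your argument is complete and matches the cited references.
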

\begin{proof} See \cite{Knapp}, pp. 137--140 and p. 170, or \cite{O87}, \S 7.\end{proof}

\section{Generalized Principal Series Representations}\label{section2}
\noindent
We now  introduce the spherical generalized principal series
representations and their intertwining operators. We refer to
\cite{KnappStein1,KnappStein2,VoganWallach} for proofs. Later we will show that those intertwining operators coincide with the $\CosL$-transform.

For $\lambda\in\fa_\C^*$ define a continuous representation of $G$  on $L^2(\cB)$ by 
\begin{equation}\label{piLambda}
[\pi_\lambda (x)f] (b):= a(x^{-1}b )^{-\lambda-\rho}f(x^{-1}\cdot b)\, .
\end{equation}
Then $\pi_\lambda =\pi_{P,\lambda}=\Ind_P^G \chi_\lambda$. It follows easily by (\ref{eq-RadonNikonB}) that $\pi_\lambda$ is unitary if and only if $\lambda\in i\fa^*$. 

\begin{theorem}[Vogan-Wallach]\label{th-irredu} There exists a countable collection $\{p_n\}_{n}$ of non-zero holomorphic polynomials on $\fa_\C^*$ such that if $\lambda\in\fa_\C^*$ and $p_n(\lambda)\not= 0$, then $\pi_\lambda$ is irreducible. In particular, $\pi_\lambda$ is irreducible for almost all $\lambda\in \fa_\C^*$.
\end{theorem}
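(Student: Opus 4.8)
The plan is to obtain generic irreducibility from the invertibility of the standard long intertwining operator attached to $P$ and its opposite $\oP=\theta(P)$, treating the meromorphic continuation recalled above as known. First I would introduce the standard intertwining operator $J(\lambda)\colon\pi_{P,\lambda}\to\pi_{\oP,\lambda}$, defined in the induced picture for $\Re\lambda$ in the open positive chamber by integrating a section over $\oN$; convergence in that cone is controlled by the normalization $\int_{\oN}a(\bar n)^{-2\rho}\,d\bar n=1$ together with the growth of $a(\bar n)$. By the Knapp--Stein and Vogan--Wallach results recalled in this section, $J(\lambda)$ extends to a meromorphic family of $G$-equivariant maps on all of $\fa_\C^*$, and the same holds for the companion operator $\bar J(\lambda)\colon\pi_{\oP,\lambda}\to\pi_{P,\lambda}$ in the opposite direction. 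The idea is that whenever both operators are regular at $\lambda$ and their composite is a nonzero scalar, $J(\lambda)$ is an isomorphism, and this will force irreducibility.

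The central step is the scalar composition identity $\bar J(\lambda)\circ J(\lambda)=\gamma(\lambda)\,\id$, where $\gamma$ is a scalar meromorphic function. I would establish scalarness via Harish-Chandra's theory of intertwining integrals: the composite is a meromorphic family of $G$-endomorphisms, and the standard reciprocity (Maass--Selberg type) relations identify it with multiplication by the product $c_P(\lambda)c_P(-\lambda)$ of Harish-Chandra $c$-functions, independently of whether $\pi_\lambda$ is reducible. Here $c_P(\lambda)=J(\lambda)1$ is the value on the spherical vector, and the Gindikin--Karpelevich product formula expresses $c_P$ as a finite product of quotients of $\Gamma$-factors, each evaluated at an affine-linear form $\langle\lambda,\alpha^\vee\rangle$ with $\alpha\in\gD^+$.

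Granting this, the poles of $J(\lambda)$ and $\bar J(\lambda)$ and the zeros of $\gamma(\lambda)$ all lie on the hyperplanes $\{\langle\lambda,\alpha^\vee\rangle=c\}$ determined by the poles and zeros of the $\Gamma$-quotients, i.e. for $\alpha\in\gD^+$ and $c$ ranging over a locally finite set of real numbers. Each such hyperplane is the zero locus of a degree-one, hence nonzero, holomorphic polynomial on $\fa_\C^*$; label these $\{p_n\}_n$. If $p_n(\lambda)\neq0$ for all $n$, then $J(\lambda)$ and $\bar J(\lambda)$ are regular and $\gamma(\lambda)\neq0$, so $J(\lambda)$ is invertible with inverse $\gamma(\lambda)^{-1}\bar J(\lambda)$; thus $\pi_{P,\lambda}\cong\pi_{\oP,\lambda}$. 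I would then invoke the Langlands--Bruhat submodule structure: $\pi_{P,\lambda}$ has a unique irreducible submodule and $\pi_{\oP,\lambda}$ a unique irreducible quotient, so an isomorphism between them forces socle and cosocle to coincide, whence $\pi_\lambda$ is irreducible. Since a countable union of hyperplanes in $\fa_\C^*$ has measure zero, this yields irreducibility for almost all $\lambda$.

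The hard part is the input summarized in the second step: the scalar composition formula and the precise location of the singularities of the intertwining operators, i.e. the $c$-function computation, which is where the real representation-theoretic content sits. The bookkeeping turning the singular set into a countable family of polynomial equations, and the measure-zero conclusion, should then be routine. A secondary delicate point is the irreducibility criterion itself, namely that a bijective long intertwining operator forces irreducibility; making this rigorous relies on the Langlands classification and the uniqueness of the irreducible submodule and quotient, which I would cite rather than reprove. An alternative route, avoiding the $c$-function, would be to bound reducibility directly by the vanishing of the determinants of $J(\lambda)$ on each finite block of $K$-types and assemble the resulting holomorphic conditions, but the $c$-function argument is cleaner and dovetails with the $K$-spectrum computation carried out later in the paper.
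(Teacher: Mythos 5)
The paper does not reprove this statement: its entire proof is the citation to Lemma~5.3 of \cite{VoganWallach}, where generic irreducibility is detected through the $U(\fg)$-action in the compact picture --- the matrix coefficients of $\pi_\lambda(u)$, $u\in U(\fg)$, between $K$-isotypic subspaces depend polynomially on $\lambda$, and the condition that each $K$-type generate every other one under this action is encoded in countably many such polynomials, none identically zero because each is nonzero at some point of irreducibility. Your route through invertibility of the long intertwining operator is genuinely different, and its central step is circular. The scalar identity $\bar J(\lambda)\circ J(\lambda)=\gamma(\lambda)\,\id$ is \emph{not} available ``independently of whether $\pi_\lambda$ is reducible'': an endomorphism of a reducible admissible representation need not be scalar, and Maass--Selberg-type relations do not supply scalarness for free. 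In the literature the logic runs the other way --- Knapp--Stein \cite{KnappStein1} obtain their scalar $\gamma$-factor from Bruhat's generic irreducibility on the unitary axis together with meromorphic continuation, and in the present paper Theorem~\ref{th-Inverse} deduces $J(-\lambda)J(\lambda)=c_P(\lambda)c_P(-\lambda)\,\id$ \emph{from} Theorem~\ref{th-irredu}, not conversely. Taking the scalar composition formula as an input to prove irreducibility therefore smuggles in the conclusion.

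There is a second gap in the Langlands step. Uniqueness of the irreducible quotient of $\pi_{P,\lambda}$ (and of the irreducible submodule of $\pi_{\oP,\lambda}$) is available only when $\Re\langle\lambda,\alpha\rangle>0$ for the roots $\alpha$ of $\fn$; exchanging $P$ and $\oP$ covers the opposite open chamber, but on the walls $\Re\langle\lambda,\alpha\rangle=0$ --- in particular on the whole unitary axis $i\fa^*$ --- your argument says nothing. The walls form a set of real codimension one which is not contained in the zero locus of any countable family of nonzero holomorphic polynomials on $\fa_\C^*$, so at best you obtain irreducibility off the union of countably many complex hypersurfaces \emph{and} the walls: enough for the ``almost all'' corollary, but not for the theorem as stated, whose polynomial form is precisely what guarantees generic irreducibility along $i\fa^*$ as well. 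Finally, a repairable slip: an isomorphism $\pi_{P,\lambda}\simeq\pi_{\oP,\lambda}$ yields a module with simple socle and simple cosocle, which does not force irreducibility (a uniserial module of length two has both); the correct argument is that for $\Re\lambda$ dominant the image of $J(\lambda)$ \emph{is} the irreducible Langlands quotient, so invertibility of $J(\lambda)$ forces $\pi_{\oP,\lambda}$, and hence $\pi_{P,\lambda}$, to be irreducible.
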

\begin{proof} This is Lemma 5.3 in \cite{VoganWallach}.\end{proof}

Replace $P$ by $\oP$, realize $\cB$ as $G/\oP$ and then induce the character determined by $\lambda$ from
$\oP$ to $G$. We denote the corresponding representation by $\pi_{\oP, \lambda}$.

\begin{lemma} Let $\pi^\theta_\lambda :=\pi_{P,\lambda}\circ \theta$. Then
$ \pi_{\oP,\lambda}=\pi^\theta_{-\lambda} $ as a representation of $G$ acting on
$L^2(\cB)$.
\end{lemma}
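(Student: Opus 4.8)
The plan is to unwind both sides of the claimed identity $\pi_{\oP,\lambda}=\pi^\theta_{-\lambda}$ by writing out the definition of each as an induced representation and comparing them on a common realization space. Both representations act on $L^2(\cB)$, but the key point is that $\cB$ is realized as $G/P$ for the left-hand side via the $\,\bar\cdot\,$-action (induction from $\oP$) and as $G/P$ via the ordinary $\cdot$-action on the right-hand side (induction from $P$, precomposed with $\theta$). First I would write, for $f\in L^2(\cB)$ and $x\in G$,
\[
[\pi_{\oP,\lambda}(x)f](b)=a_{\oP}(x^{-1}b)^{-\lambda-\rho}\,f(x^{-1}\,\bar\cdot\, b)\,,
\]
which is the analogue of (\ref{piLambda}) with $P$ replaced by $\oP$. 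The $-\rho$ shift is legitimate because $\theta(\fn)=\bar\fn$ gives $\rho_{\oP}=-\rho_P$ on $\fa$, but in fact the cleaner route is to note that the half-sum of roots for $\oP$ is $-\rho$, so the modular normalization combines with the character to produce exactly $-\lambda-\rho$ after the sign conventions are tracked; I would verify this shift carefully rather than assume it.

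The heart of the argument is Lemma \ref{le-BarAction}, which supplies the two translation formulas
\[
a_{\oP}(gb)=a_P(\theta(g)b)^{-1}\,,\qquad g\,\bar\cdot\, b=\theta(g)\cdot b\,.
\]
Substituting $g=x^{-1}$ and using $\theta(x^{-1})=\theta(x)^{-1}$, the right-hand factor becomes $x^{-1}\,\bar\cdot\, b=\theta(x)^{-1}\cdot b$ and the cocycle becomes $a_{\oP}(x^{-1}b)=a_P(\theta(x)^{-1}b)^{-1}$. Plugging these in,
\[
[\pi_{\oP,\lambda}(x)f](b)=a_P(\theta(x)^{-1}b)^{\lambda+\rho}\,f(\theta(x)^{-1}\cdot b)
=a_P(\theta(x)^{-1}b)^{-(-\lambda)-\rho}\,f(\theta(x)^{-1}\cdot b)\,,
\]
and the last expression is exactly $[\pi_{P,-\lambda}(\theta(x))f](b)=[\pi^\theta_{-\lambda}(x)f](b)$ by the definition of $\pi_{P,-\lambda}$ in (\ref{piLambda}) together with $\pi^\theta_\mu=\pi_{P,\mu}\circ\theta$. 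This establishes the pointwise equality of the two operators on every $f$ and every $b$, hence equality of the representations.

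I expect the main obstacle to be bookkeeping with the $\rho$-shift and the inversion of the cocycle: one must be sure that the modular character built into the unitary normalization of induction from $\oP$ really produces $a_{\oP}(x^{-1}b)^{-\lambda-\rho}$ and not some other sign combination, since $\rho$ here always denotes $\rho_P$. The cleanest way to dispel any doubt is to check that both sides are unitary precisely on $i\fa^*$ (via (\ref{eq-RadonNikonB}) and its $\oP$-analogue) and that the underlying spaces of $L$-fixed or measure-theoretic structure match; the algebraic identity of the cocycles then follows from Lemma \ref{le-BarAction} as above. A minor secondary point worth a remark is that $k(g)$ and $m(g)$ are only defined up to $L$, so one should confirm the formulas are evaluated on the well-defined quantities $a_P(\cdot)$ and $k_P(\cdot)L$; Lemma \ref{le-BarAction} is already phrased in those terms, so no additional care is needed.
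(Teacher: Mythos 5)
Your overall route is exactly the paper's: unwind both sides in the compact picture and convert the $\oP$-cocycle into the $P$-cocycle via Lemma \ref{le-BarAction} and equation (\ref{eq-APbar}). But the sign bookkeeping---the very point you yourself flag as the main obstacle---comes out wrong in your displays. Normalized induction from $\oP$ carries the shift by $\rho_{\oP}$, the half-sum of roots of $\fa$ in $\bar\fn$, and since $\rho_{\oP}=-\rho_P=-\rho$ the compact-picture formula reads
\[
[\pi_{\oP,\lambda}(x)f](b)=a_{\oP}(x^{-1}b)^{-\lambda-\rho_{\oP}}\,f(x^{-1}\,\bar{\cdot}\,b)
=a_{\oP}(x^{-1}b)^{-\lambda+\rho}\,f(x^{-1}\,\bar{\cdot}\,b)\,,
\]
not $a_{\oP}(x^{-1}b)^{-\lambda-\rho}f(x^{-1}\,\bar{\cdot}\,b)$ as in your first display (you state explicitly that $\rho$ always denotes $\rho_P$). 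Your justifying sentence, that the modular normalization ``produces exactly $-\lambda-\rho$,'' is precisely backwards: $\rho_{\oP}=-\rho_P$ forces the exponent $-\lambda+\rho$.

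This is not cosmetic, because with your exponent the argument fails. Converting via $a_{\oP}(x^{-1}b)=a_P(\theta(x)^{-1}b)^{-1}$, your starting formula gives $a_P(\theta(x)^{-1}b)^{\lambda+\rho}f(\theta(x)^{-1}\cdot b)=[\pi_{P,-\lambda-2\rho}(\theta(x))f](b)$, so it would ``prove'' $\pi_{\oP,\lambda}=\pi^\theta_{-\lambda-2\rho}$ rather than the lemma; you recover the stated conclusion only through the false identity $\lambda+\rho=-(-\lambda)-\rho$ in your second display, whose right-hand side is $\lambda-\rho$. With the correct shift $-\lambda+\rho$, the same two ingredients yield $a_P(\theta(x)^{-1}b)^{\lambda-\rho}f(\theta(x)^{-1}\cdot b)=[\pi_{P,-\lambda}(\theta(x))f](b)$, which is exactly the paper's three-line computation. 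Incidentally, the unitarity cross-check you propose would have caught the slip: with exponent $-\lambda-\rho$, the $\oP$-analogue of (\ref{eq-RadonNikonB}) makes $\pi_{\oP,\lambda}$ unitary on the affine set $\Re\lambda=-2\rho$ rather than on $i\fa^*$, contradicting the symmetry you expect of both sides.
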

\begin{proof} This follows easily from Lemma \ref{le-BarAction}, equation (\ref{eq-APbar}) and $\rho_{\oP}=-\rho_P$:
\begin{align*}
[\pi_{\oP,\lambda} (x)f](b)&=a_{\oP}(x^{-1}b)^{-\lambda -\rho_{\oP}}f(x^{-1}\,\bar{\cdot}\, b)\\
&=a_P(\theta (x)^{-1}b)^{\lambda -\rho_P}f(\theta (x)^{-1} \cdot b)\\
&= [\pi_{P,-\lambda}(\theta (x))f](b)\, . \tag*{\qedhere}
\end{align*}
\end{proof}

For $f\in C^\infty (\cB)$ and $b=k\cdot b_o$ define the standard intertwining operator $J (\lambda )=J_{\oP|P} (\lambda )$ by 
\begin{equation}\label{defJ}
J(\lambda)f(b):=
\int_{\oN} a (\bar n)^{-\lambda -\rho}f ( k \bar n\cdot b_o)\, d\bar n
\end{equation}
whenever the integral exists. We note that this is well defined. Indeed, if $m\in L$ then $a(m\bar n m^{-1})= a(\bar n)$, and the Haar measure on $\oN$ is invariant under conjugation by $m$. Let $b=k\cdot b_o=k_1\cdot b_o$. Then $k=k_1m $ for some  $m\in L$. Thus
\begin{eqnarray*}
\int_{\oN} a(\bar n)^{-\lambda -\rho}f(k\bar n\cdot b_o)\, d\bar n &=&
\int_{\oN} a(\bar n)^{-\lambda -\rho}f(k_1m \bar n\cdot b_o)\, d\bar n\\
&=&
\int_{\oN} a(m \bar n m^{-1})^{-\lambda -\rho}f(k_1 (m \bar n m^{-1})\cdot b_o)\, d\bar n\\
&=&
\int_{\oN} a(\bar n)^{-\lambda -\rho}f(k_1 \bar n\cdot b_o)\, d\bar n \, .
\end{eqnarray*}

Recall that the space of smooth vectors for $\pi_\lambda$ is $C^\infty (\cB )$. In particular, it is independent of $\lambda$. If $X\in\fg$ then we define as usually $\pi^\infty_\lambda (X) : C^\infty (\cB)\to C^\infty (\cB)$ by
\[\lim_{t\to 0} \frac{\pi_\lambda (\exp (tX))f-f}{t}=\left.\dfrac{d}{dt}\right|_{t=0}\pi_\lambda (\exp (tX))f\]
where the derivative is taken in $L^2(\cB)$. Then $\pi_\lambda^\infty $ is a representation of $\fg$ in $C^\infty (\cB)$ and extends to a representation of the universal enveloping algebra $U(\fg )$. If $u\in U(\fg )$ then $\pi_\lambda^\infty (u) : C^\infty (\cB)\to C^\infty (\cB)$ is a differential operator.
If $\lambda \in\fa_\C^*$ write $\lambda =\lambda_R+i\lambda_I$ with $\lambda_R,\lambda_I\in\fa^*$.

\begin{theorem}[Vogan-Wallach] 
\label{thm:VW}
The following holds:
\begin{enumerate}
\item There exists a constant $a_P$ such that if
\[\lambda \in \fa_\C^* (a_P):=\{\mu \in \fa_\C^*\mid (\forall \alpha \in \Delta^+)\,\,
\langle \mu_R ,\ga \rangle \ge a_P\}\]
then the integral (\ref{defJ}) converges absolutely. Furthermore, there exists a constant $C>0$ such
that for all $\lambda \in \fa_\C^* (a_P)$ and $f \in C^\infty(\cB)$
\begin{equation}\label{jlambdafsup}
\|J(\lambda )f\|_\infty\le C \|f\|_\infty\, .
\end{equation}
\item If $f\in C^\infty (\cB)$ then
\[\fa_\C^* (a_P)\ni \lambda \mapsto J(\lambda )f\in C^\infty (\cB)\]
is continuous and holomorphic on the interior $\{\mu \in \fa_\C^*\mid (\forall \alpha \in \Delta^+)\,\,
\langle \mu_R ,\ga \rangle > a_P\}$.
\item There exists polynomial functions $b : \fa_\C^*\to \C$ and $D : \fa_\C^*\to U(\fg )^K$
such that if $\lambda \in \fa_\C^* (a_P)$ and $f\in C^\infty (\cB)$, then
\[b (\lambda )J(\lambda )f= J(\lambda +4\rho )\pi_{\lambda +4\rho} (D (\lambda ))f\, .\]
In particular $\lambda \mapsto J(\lambda )f$ extends to a meromorphic function on $\fa_\C^*$.
\item The operator $J (\lambda)$ intertwines $\pi_\lambda$ and $\pi_{-\lambda}^\theta$. 
Thus, if $x\in G$ and $f \in C^\infty(\cB)$, then
\begin{equation}\label{eq-Intertwining}
J (\lambda) (\pi_\lambda (x)f)=
\pi_{-\lambda}^\theta (x) J (\lambda)f\, .
\end{equation}
\end{enumerate}
\end{theorem}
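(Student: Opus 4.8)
The plan is to work in the induced picture. Identify $C^\infty(\cB)$ with the space of smooth functions $\tilde f$ on $G$ satisfying $\tilde f(gman)=a^{-\lambda-\rho}\tilde f(g)$ through $\tilde f(k)=f(k\cdot b_o)$; in this model $\pi_\lambda$ is left translation $[L_x\tilde f](g)=\tilde f(x^{-1}g)$ and (\ref{defJ}) becomes $J(\lambda)\tilde f(g)=\int_{\oN}\tilde f(g\bar n)\,d\bar n$. I would establish the four assertions in the order (1), (2), (4), (3): the first two are soft analytic estimates, the intertwining relation (4) is essentially formal once the integral converges, and the meromorphic continuation (3) is the genuinely hard step.

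For (1), compactness of $\cB$ gives $|J(\lambda)f(b)|\le\|f\|_\infty\int_{\oN}a(\bar n)^{-\Re\lambda-\rho}\,d\bar n$, so everything reduces to finiteness of the Gindikin--Karpelevich integral $\int_{\oN}a(\bar n)^{-\mu-\rho}\,d\bar n$ for $\mu=\Re\lambda$ deep in the positive chamber. I would get this by comparison with the normalized integral $\int_{\oN}a(\bar n)^{-2\rho}\,d\bar n=1$: writing $-\mu-\rho=-2\rho+(\rho-\mu)$, it suffices that $a(\bar n)^{\rho-\mu}$ be bounded on $\oN$. This holds once $\langle\mu,\alpha\rangle$ is large enough for every $\alpha\in\Delta^+$, because the Iwasawa $A$-projection of $\oN$ satisfies $\alpha(\log a(\bar n))\ge 0$ for all positive roots, so $a(\bar n)^{\rho-\mu}\le 1$ there. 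The resulting value of the integral is the constant $C$ in (\ref{jlambdafsup}). For (2) the integrand $\lambda\mapsto a(\bar n)^{-\lambda-\rho}$ is entire for each fixed $\bar n$, and the domination from (1), uniform on compacta in the interior, justifies differentiating under the integral sign (or Morera plus Fubini); continuity up to the boundary follows the same way.

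For (4), in the induced picture the group acts by left translation while $J(\lambda)$ is right-integration over $\oN$, and the two commute outright: $J(\lambda)(L_x\tilde f)(g)=\int_{\oN}\tilde f(x^{-1}g\bar n)\,d\bar n=[L_xJ(\lambda)\tilde f](g)$. The only thing to verify is that $J(\lambda)\tilde f$ transforms on the right under $\oP$ by the character of $\lambda$, i.e.\ lies in the model of $\pi_{\oP,\lambda}=\pi^\theta_{-\lambda}$; this is a change of variables $\bar n\mapsto\bar m\bar n$ together with the modular factor, using $\rho_{\oP}=-\rho_P$ and Lemma \ref{le-BarAction} with (\ref{eq-APbar}). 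Translating back to $\cB$ via $b=k\cdot b_o$ gives (\ref{eq-Intertwining}) on the convergence domain, and it then persists on all of $\fa_\C^*$ by the continuation from (3).

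The main obstacle is (3), which I would drive by a Bernstein--Sato type functional equation. In the maximal-parabolic case at hand $\dim\fa=1$ and $\oN$ is a Euclidean space on which $a(\bar n)^{-2\rho}$ is a power of the explicit polynomial $\det(\rI+X^*X)$, so $a(\bar n)^{-\lambda-\rho}$ is a complex power of a single polynomial $P$. Bernstein's theorem on the analytic continuation of $P^s$ then furnishes a nonzero $b(\lambda)$ and a differential operator with polynomial-in-$\lambda$ coefficients implementing a shift of the exponent; transporting this operator to $U(\fg)$ and averaging over $K$ yields $D(\lambda)\in U(\fg)^K$, while the bookkeeping of how the $A$-parameter is displaced accounts for the specific increment, here $4\rho$. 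This produces $b(\lambda)J(\lambda)f=J(\lambda+4\rho)\,\pi_{\lambda+4\rho}(D(\lambda))f$ on the convergence region; since the right-hand side is holomorphic wherever $\lambda+4\rho$ lies in the convergence domain, the identity \emph{defines} the continuation of $\lambda\mapsto J(\lambda)f$ one block further, and iterating the shift covers $\fa_\C^*$ with poles confined to the zero sets of the translates of $b$. Producing $D(\lambda)$ with the required $K$-invariance and pinning down $b(\lambda)$ (equivalently, the exact shift) is where the real work lies; in the general semisimple setting this is exactly the content of the Knapp--Stein and Vogan--Wallach construction.
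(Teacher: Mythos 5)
Your parts (1) and (2) are essentially the paper's own argument: the same constant $a_P=\max_{\alpha\in\Delta^+}\langle\alpha,\rho\rangle$, the same appeal to $\langle \log a(\bar n),\alpha\rangle\ge 0$ (Lemma A.3.3 of \cite{VoganWallach}), and domination of $|a(\bar n)^{-\lambda-\rho}|$ by the normalized integrand $a(\bar n)^{-2\rho}$, with $C$ coming from $\int_{\oN}a(\bar n)^{-2\rho}\,d\bar n=1$ and holomorphy from uniform convergence on $\fa_\C^*(a)$, $a>a_P$. (One small gloss, shared with the paper: when $\dim\fa>1$, the inequality $a(\bar n)^{\rho-\mu_R}\le 1$ uses that $\mu_R-\rho$ is dominant and that two elements of the same closed Weyl chamber pair nonnegatively.) For (4) you take a genuinely different, though standard, route: you work in the induced (noncompact) picture, where $J(\lambda)\tilde f(g)=\int_{\oN}\tilde f(g\bar n)\,d\bar n$ commutes formally with left translation, and the actual content is the right $\oP$-equivariance of the output (change of variables plus $\rho_{\oP}=-\rho_P$ and (\ref{eq-APbar})). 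The paper instead stays in the compact picture: it first rewrites $J(\lambda)$ as convolution against the kernel $\alpha_P(k^{-1}h)^{\lambda-\rho}$ (Lemma \ref{lemma-JK}) and then runs the cocycle identity $\alpha(k^{-1}h)=\alpha\bigl(k^{-1}x\,k(x^{-1}h)\bigr)a(x^{-1}h)$ together with Lemma \ref{le-BarAction}. The two arguments are equivalent; yours is the cleaner formal verification, while the paper's buys the kernel formula that it needs anyway to identify $J(\lambda)$ with the $\CosL$-transform in Section 4.

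The substantive divergence is (3). The paper's primary route, following \cite{VoganWallach} (Theorem 1.5 there, which it cites rather than reproves), is tensoring with finite-dimensional representations; the Bernstein-polynomial argument you adopt is mentioned in the paper only as an alternative, with references to \cite{BD92,O87,OP}. Your route does yield meromorphic continuation of $\lambda\mapsto J(\lambda)f$: since $a(\bar n)^{-\lambda-\rho}$ is a complex power of an explicit polynomial on $\oN$ (in the maximal-parabolic, abelian-$\fn$ case), Bernstein--Sato continues the family of distributions and hence, via (\ref{defJ}), the operator. But be clear about what it does \emph{not} automatically give: Bernstein's functional equation produces a differential operator on $\oN$ with polynomial coefficients, and your step of ``transporting this operator to $U(\fg)$ and averaging over $K$'' does not by itself produce something of the form $\pi_{\lambda+4\rho}(D(\lambda))$ with $D(\lambda)\in U(\fg)^K$, nor does it pin the shift to exactly $4\rho$; that precise structural statement is what the Vogan--Wallach tensoring construction delivers, and you correctly flag it as the remaining work. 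Note also that your (3) is written for $\dim\fa=1$ with $\oN$ Euclidean, whereas the theorem as stated covers a general parabolic; since the paper itself only sketches (1)--(2), proves (4) in full, and defers (3) to \cite{VoganWallach}, your proposal reaches the same level of completeness, just through the alternative route the paper hints at.
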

\begin{proof} Referring to \cite{VoganWallach}, we have that (1) is  Lemma 1.2, (2) is Lemma 1.3, (3) is Theorem 1.5, and (4) is Lemma 1.2 for $\lambda \in \fa_\C^* (a_P)$. It extends to $\mathfrak a_\C^*$ by meromorphic continuation.
\end{proof}

Our situation is simpler than the general case considered in \cite{VoganWallach}, therefore--and for completeness--we describe the main idea behind the proof. Let
\begin{equation}\label{defaP}
a_P :=\max_{\alpha \in \Delta^+}\langle \alpha ,\rho\rangle > 0\, .
\end{equation}
According to Lemma A.3.3 in \cite{VoganWallach} we have $\langle \log a(\bar n),\alpha \rangle \ge 0$ for all $\alpha\in\Delta^+$. Thus for $\lambda \in \fa_\C^*(a_P)$
\[|a(\bar n )^{-\lambda -\rho}|\le a(\bar n)^{-2\rho}\]
and $\bar n \mapsto a(\bar n)^{-2\rho }$ is integrable. Thus, the integral defining $J(\lambda)$ exists and $\lambda \mapsto J(\lambda )f$ is holomorphic and also (\ref{jlambdafsup}) clearly follows. Note that (\ref{jlambdafsup}) says in particular that $J(\lambda )f\in C^\infty (\cB)$.

Part (2) follows by the above estimates (\ref{jlambdafsup}) that show that the integral defining $J(\lambda )f$ converges uniformly on any domain of the form $\fa_\C^* (a)$ with $a>a_P$.

The proof of (3) uses tensoring with finite dimensional representations of $G$. Another way of showing that $\lambda \mapsto J (\lambda )f$ extends to a meromorphic function on $\fa_\C^*$ is to use the
last part of (\ref{defJ}) and ideas from \cite{BD92, O87,OP} using the Bernstein polynomial for $\bar n \mapsto a(\bar n)^{-\lambda -\rho}$ and equation (\ref{defJ}).

We will show that $J (\lambda )$ is an intertwining operator after having proved Lemma \ref{lemma-JK} below.

\begin{lemma} Let $a_P$ be as in (\ref{defaP}) and let $\lambda \in \fa_\C^* (a_P)$. Let $1\le p\le \infty$ and let $f\in L^p(\cB)$. Then the integral (\ref{defJ}) exists and
$|J(\lambda )f(k)|\le \|f\|_p$.
In particular, $J(\lambda )f\in L^p(\cB)$ and $\|J(\lambda )f\|_p\le \|f\|_p$.
\end{lemma}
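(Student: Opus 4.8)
The plan is to dominate the integral kernel $a(\bar n)^{-\lambda-\rho}$ pointwise by the probability density $a(\bar n)^{-2\rho}$, convert the resulting integral over $\oN$ into an integral over $\cB$ by the change of variables (\ref{intKNbar}), and then exploit the $K$-invariance of $db$ together with the fact that $db$ is a probability measure. The only genuine analytic input, the domination of the kernel, is already available from the discussion following Theorem \ref{thm:VW}, so the argument reduces to elementary measure-theoretic bookkeeping.

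First I would invoke the estimate recorded after Theorem \ref{thm:VW}: for $\lambda\in\fa_\C^*(a_P)$ one has $|a(\bar n)^{-\lambda-\rho}|\le a(\bar n)^{-2\rho}$ for all $\bar n\in\oN$. Writing $b=k\cdot b_o$ and pulling the absolute value inside the integral (\ref{defJ}) gives
\[
|J(\lambda)f(k)|\le \int_{\oN} a(\bar n)^{-2\rho}\,|f(k\bar n\cdot b_o)|\,d\bar n\, .
\]
Since the $G$-action on $\cB=G/P$ satisfies $k\bar n\cdot b_o=k\cdot(\bar n\cdot b_o)$, applying (\ref{intKNbar}) to the function $b\mapsto|f(k\cdot b)|$ rewrites the right-hand side as $\int_\cB |f(k\cdot b)|\,db$. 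The normalization $\int_\cB g(b)\,db=\int_K g(k'\cdot b_o)\,dk'$ and the left invariance of Haar measure on $K$ show that $db$ is a $K$-invariant probability measure, whence $\int_\cB |f(k\cdot b)|\,db=\|f\|_1$. Thus the integral (\ref{defJ}) converges absolutely and $|J(\lambda)f(k)|\le\|f\|_1$ for every $k$.

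To conclude, I would use that $(\cB,db)$ has total mass one, so that $\|f\|_1\le\|f\|_p$ for every $p\in[1,\infty]$ by H\"older's inequality; this gives the claimed pointwise bound $|J(\lambda)f(k)|\le\|f\|_p$. As it holds for almost every $b\in\cB$, we get $J(\lambda)f\in L^\infty(\cB)\subseteq L^p(\cB)$ and, again because $db$ is a probability measure, $\|J(\lambda)f\|_p\le\|J(\lambda)f\|_\infty\le\|f\|_p$.

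The point demanding the most care is not analytic depth but the treatment of $f$ as an $L^p$-class rather than a genuine function: one must check that $b\mapsto f(k\cdot b)$ is measurable and that (\ref{intKNbar}) is legitimate for such $f$. Both hold because $b\mapsto k\cdot b$ is a measure-preserving diffeomorphism of $\cB$ and the pushforward of $a(\bar n)^{-2\rho}\,d\bar n$ under $\bar n\mapsto\bar n\cdot b_o$ is exactly $db$, so null sets are respected; the joint measurability of $(k,\bar n)\mapsto a(\bar n)^{-2\rho}|f(k\bar n\cdot b_o)|$ and Tonelli's theorem on $K\times\oN$ then guarantee that $k\mapsto J(\lambda)f(k)$ is measurable. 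No further subtlety arises.
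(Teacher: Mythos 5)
Your proof is correct, and it reaches the conclusion by a slightly different, more streamlined route than the paper. The paper fixes the conjugate exponent $q$ with $\frac{1}{p}+\frac{1}{q}=1$, splits $2\rho=\frac{2}{p}\rho+\frac{2}{q}\rho$, and applies H\"older's inequality directly inside the $\oN$-integral with respect to the probability measure $a(\bar n)^{-2\rho}\,d\bar n$, obtaining $|J(\lambda)f(k)|\le \left(\int_\cB |f(kb)|^p\,db\right)^{1/p}=\|f\|_p$ in a single chain, with the case $p=\infty$ disposed of separately via (\ref{jlambdafsup}). You instead first establish the strongest pointwise estimate $|J(\lambda)f(k)|\le\|f\|_1$ --- in effect, that $J(\lambda)$ maps $L^1(\cB)$ into $L^\infty(\cB)$ with norm at most one --- and then deduce all cases $1\le p\le\infty$ simultaneously from the monotonicity of norms on the probability space $(\cB,db)$, namely $\|f\|_1\le\|f\|_p$ and $\|g\|_p\le\|g\|_\infty$. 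The analytic inputs are identical in both arguments (the domination $|a(\bar n)^{-\lambda-\rho}|\le a(\bar n)^{-2\rho}$ for $\lambda\in\fa_\C^*(a_P)$ from the discussion after Theorem \ref{thm:VW}, the normalization $\int_{\oN}a(\bar n)^{-2\rho}\,d\bar n=1$, the change of variables (\ref{intKNbar}), and the $K$-invariance of $db$), and your step $\|f\|_1\le\|f\|_p$ is essentially the paper's H\"older application transplanted from $\oN$ to $\cB$. What your organization buys is a uniform treatment of all $p$ with no conjugate-exponent bookkeeping and no case split at $p=\infty$, together with the explicit and slightly stronger $L^1\to L^\infty$ smoothing bound; what the paper's version buys is the direct display of the intermediate quantity $\left(\int_\cB|f(kb)|^p\,db\right)^{1/p}$. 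Your closing remarks on measurability (Tonelli on $K\times\oN$ and the pushforward identity underlying (\ref{intKNbar})) are sound and make explicit a point the paper leaves tacit.
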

\begin{proof} The case $p=\infty$ is (\ref{jlambdafsup}). We can therefore assume that $p<\infty$. Let $q$ be such that $\frac{1}{p}+\frac{1}{q}=1$. Then $2\rho = \frac{2}{p}\rho +\frac{2}{q}\rho$. As $|a(\bar n)^{-\lambda -\rho}|\le a(\bar n)^{-2\rho}$ and $\int_{\oN} a(\bar n)^{-2\rho }\, d\bar n=1$, it follows that
\begin{eqnarray*}
\Big|\int_{\oN}a (\bar n)^{-\lambda -\rho}f(k k(\bar n )\cdot b_o)\, d\bar n\Big|
&\le & \int_{\oN} a(\bar n)^{-2\rho }|f(k k(\bar n)\cdot b_o )|\, d\bar n\\
&\leq& \left(\int_{\oN}
\left|a(\bar n)^{-\frac{2}{p}\rho}f(k k(\bar n)\cdot b_o )\right|^p \, d\bar n\right)^{1/p}\\
&=& \left( \int_{\oN }|f (k k(\bar n)\cdot b_o)|^p
a (\bar n)^{-2\rho }\, d\bar n\right)^{1/p}\\
&=& \left(\int_{\cB} |f(k b)|^p\, db\right)^{1/p}\\
&=&\|f\|_p\, .
\end{eqnarray*}
Here we used  (\ref{intKNbar}) to transfer the integral over $\oN$ to an integral over $\cB$.
\end{proof}

For $x\in \oN P$ write
\[x=\bar{n}_P(x) m_{P}(x)\alpha_P (x)n_{P}(x)\]
with $\bar{n}_P(x) \in \oN$, $m_{P}(x)\in M$, $\alpha_P (x)\in A$,
and $n_{P}(x)\in N$. Note that $x\mapsto \alpha_P (x)$ is right $MN$-invariant. We leave out the subscript ${}_P$ if it is clear which parabolic subgroup we are using.

\begin{lemma} 
\label{lemma-JK}
Assume that $\lambda\in\fa_\C^*(a_P)$. Let $f\in C^\infty (\cB)$. Then for $k \in K$
\[J (\lambda )f(k)=\int_K \alpha_P (k^{-1}h)^{\lambda-\rho}f(h)\, dh
=\int_{\cB} \alpha_P (k^{-1}b)^{\lambda-\rho} f(b)\, db\, .\]
In particular, $J(\lambda)$ is a convolution operator on $L^2(\cB)$ if $\lambda\in \fa_\C^* (a_P )$.
\end{lemma}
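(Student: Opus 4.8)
The plan is to prove the first equality by a single change of variables that identifies the defining integral over $\oN$ with an integral over $\cB$, and then to read off the second equality and the convolution statement. First I would dispose of the equivalence of the two displayed integrals, which is bookkeeping. Since $\alpha_P$ is right $MN$-invariant, the function $h\mapsto \alpha_P(k^{-1}h)$ on $K$ is right $L$-invariant (as $L=K\cap M\subseteq M$) and hence descends to $\cB=K/L$; the normalization $\int_\cB F(b)\,db=\int_K F(h\cdot b_o)\,dh$ then gives $\int_K \alpha_P(k^{-1}h)^{\lambda-\rho}f(h)\,dh=\int_\cB \alpha_P(k^{-1}b)^{\lambda-\rho}f(b)\,db$ immediately. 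It therefore suffices to prove $J(\lambda)f(k\cdot b_o)=\int_\cB \alpha_P(k^{-1}b)^{\lambda-\rho}f(b)\,db$, and throughout I use that $\lambda\in\fa_\C^*(a_P)$ guarantees absolute convergence (Theorem \ref{thm:VW}(1), equivalently the preceding lemma), so that every manipulation below is a legitimate change of variables.

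The crux is a computation relating the $A$-components of two decompositions. For $\bar n\in\oN$ I write its $KMAN$-decomposition $\bar n=k_P(\bar n)m_P(\bar n)a_P(\bar n)n_P(\bar n)$, so that $a(\bar n)=a_P(\bar n)$ and $\bar n\cdot b_o=k_P(\bar n)\cdot b_o$. Solving for $k_P(\bar n)$ gives $k_P(\bar n)=\bar n\,\bigl(m_P(\bar n)a_P(\bar n)n_P(\bar n)\bigr)^{-1}$, and rewriting the factor to the right of $\bar n$ in $MAN$-order (using that $M$ centralizes $A$ and normalizes $N$, and $A$ normalizes $N$) shows its $A$-part equals $a_P(\bar n)^{-1}$. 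Since $\bar n\in\oN$, this is exactly the $\oN MAN$-decomposition of $k_P(\bar n)\in\oN P$, so that
\[\alpha_P\bigl(k_P(\bar n)\bigr)=a(\bar n)^{-1}.\]
I expect this decomposition identity to be the only genuine obstacle; everything else is formal.

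With it in hand I would finish by change of variables. Starting from (\ref{defJ}) and writing $f(k\bar n\cdot b_o)=f(k\cdot(\bar n\cdot b_o))$, I factor $a(\bar n)^{-\lambda-\rho}=a(\bar n)^{\rho-\lambda}\,a(\bar n)^{-2\rho}$ and use the identity above in the form $a(\bar n)^{\rho-\lambda}=\alpha_P(k_P(\bar n))^{\lambda-\rho}$, which is a function of $b'=\bar n\cdot b_o=k_P(\bar n)\cdot b_o$. Applying (\ref{intKNbar}), which turns $\int_{\oN}(\,\cdot\,)\,a(\bar n)^{-2\rho}\,d\bar n$ into $\int_\cB(\,\cdot\,)\,db'$, yields
\[J(\lambda)f(k\cdot b_o)=\int_\cB \alpha_P(b')^{\lambda-\rho}\,f(k\cdot b')\,db'.\]
Finally I substitute $b=k\cdot b'$: because $a(k b')=e$ for $k\in K$, the Radon--Nikodym formula (\ref{eq-RadonNikonB}) reduces to the plain $K$-invariance $db'=db$ of the measure on $\cB$, while the convention $\alpha_P(k^{-1}b)=\alpha_P\bigl(k^{-1}(k\cdot b')\bigr)=\alpha_P(b')$ converts the weight. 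This gives the asserted formula.

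The convolution statement is then immediate: the kernel $\alpha_P(k^{-1}h)^{\lambda-\rho}$ depends on $(k,h)$ only through $k^{-1}h$, so $J(\lambda)$ commutes with the left $K$-action on $\cB$ and is therefore a convolution operator on $L^2(\cB)$ for every $\lambda\in\fa_\C^*(a_P)$.
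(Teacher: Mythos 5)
Your proof is correct and takes essentially the same route as the paper: the crux identity $\alpha_P\bigl(k_P(\bar n)\bigr)=a(\bar n)^{-1}$, which the paper derives by expanding $\bar n$ through the $\oN MAN$-decomposition of $k(\bar n)$ and you derive by solving for $k_P(\bar n)$ and invoking uniqueness of the $\oN MAN$-decomposition — the same computation read in mirror image. The remaining steps (factoring $a(\bar n)^{-\lambda-\rho}=\alpha_P\bigl(k_P(\bar n)\bigr)^{\lambda-\rho}a(\bar n)^{-2\rho}$, applying (\ref{intKNbar}), and using $K$-invariance of the measure on $\cB$) match the paper's proof exactly.
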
\label{le-JK}
\begin{proof} Let $\bar{n}\in\oN$. Then
\begin{eqnarray*}
\bar n &=& k(\bar n) m (\bar n) a (\bar n) n(\bar n)\\
&=&\bar{n} (k(\bar n) ) m_{\oN}(k(\bar n))\alpha (k(\bar n))n_{\oN}(k(\bar n))
m (\bar n) a (\bar n) n(\bar n)\\
&=& \bar{n} (k(\bar n) ) m^\prime \alpha (k (\bar n )) a(\bar  n) n^\prime
\end{eqnarray*}
for some $m^\prime \in M$ and $n^\prime\in N$. Thus 
$\alpha (k (\bar n )) = a(\bar n )^{-1}$.
Assume that $\lambda \in \fa_\C^* (a_P)$. 
Using (\ref{intKNbar}) we get
\begin{eqnarray*}
\int_{\oN} a (\bar n)^{-\lambda -\rho } f(kk (\bar n)\cdot b_o )\, d\bar{n}
&=&\int_{\oN} \alpha (k (\bar n))^{\lambda -\rho} f(k k(\bar n)\cdot b_o)
a(\bar n)^{-2\rho}\, d\bar n\\
&=&\int_{\cB} \alpha (b)^{\lambda -\rho} f(kb)\, db\\
&=&\int_\cB \alpha (k^{-1}b)^{\lambda -\rho} f(b)\, db
\end{eqnarray*}
as the measure on $\cB$ is $K$-invariant.
\end{proof}

We now show that $J(\lambda )\pi_\lambda (x)=\pi_{-\lambda }(\theta (x))J(\lambda)$.
For this, let $x\in G$ and $\lambda \in\fa_\C^* (a_P)$. Then
\begin{equation}\label{eq-intertw1}
J(\lambda )(\pi_\lambda (x)f)(k)=\int_K \alpha (k^{-1}h)^{\lambda -\rho} a (x^{-1}h)^{-\lambda -\rho }f(k(x^{-1}h))\, dh\, .
\end{equation}
We have
\[\alpha (k^{-1}h)=\alpha (k^{-1}xx^{-1}h)=\alpha (k^{-1}xk(x^{-1}h))a(x^{-1}h)\, .\]
Hence we can rewrite (\ref{eq-intertw1}) as
\begin{eqnarray}
J(\lambda )(\pi_\lambda (x)f)(k)&=&\int_K \alpha (k^{-1}xk(x^{-1}h))^{\lambda -\rho} f(k(x^{-1}h) ) a(x^{-1}h)^{-2\rho}\, dh\nonumber \\
&=& \int_K \alpha (k^{-1} x h)^{\lambda -\rho} f(h)\, dh\nonumber\\
&=& \int_K \alpha ((x^{-1}k)^{-1} h)^{\lambda -\rho} f(h)\, dh\label{eq-intertw2}
\, .
\end{eqnarray}
Now use Lemma \ref{le-BarAction} to write
\[x^{-1}k =k_{\oP} (x^{-1}k)m a_{\oP} (x^{-1}k)\bar n
=k_{P}(\theta (x^{-1})k)m a_P (\theta (x^{-1})k)^{-1} \bar n\]
for some $m\in M$ and $n\in N$ and insert this into (\ref{eq-intertw2}) to get
\[J(\lambda )(\pi_\lambda (x)f)(k)=a (\theta (x)^{-1}k)^{\lambda -\rho }\int_K \alpha (k(\theta (x)^{-1}k)^{-1}h)^{\lambda -\rho} f(h)\, dh
=\pi_{-\lambda }(\theta (x))J (\lambda )f (k)\, . \]

As $J(\lambda )$ is $K$-intertwining and the only $K$-invariant functions in $L^2(\cB)$ are multiple of the constant function $1(x)=1$, it follows that there exists  a meromorphic function $c_P : \fa_\C^*\to \C$ such that $J(\lambda )1=c_ (\lambda)$. For $\lambda\in \fa_\C^*(a_P)$ the function $c_P(\lambda)$ is given by the converging integral
\begin{equation}\label{cP}
c_P(\lambda ):=J(\lambda )1=\int_{\oN}a(\bar n)^{-\lambda -\rho}\, d\bar n
= \int_K \alpha (k)^{\lambda -\rho}\, dk\, .
\end{equation}
\begin{theorem}\label{th-Inverse} Let the notation be as above. Then, as an identity of meromorphic functions,
\[ J(-\lambda )J (\lambda )=
J (\lambda)J(-\lambda )=c_P (\lambda )c_P(-\lambda)\,\id \]
Furthermore, if $\lambda\in i\fa^*$, then the normalized operator $c_P(\lambda )^{-1}J(\lambda )$ is unitary.
\end{theorem}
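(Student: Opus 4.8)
The plan is to treat the two assertions separately, both resting on Schur's lemma together with the generic irreducibility of $\pi_\lambda$ from Theorem \ref{th-irredu} and the meromorphy of $\lambda\mapsto J(\lambda)f$ from Theorem \ref{thm:VW}(3).

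For the composition identity I would first check that $J(-\lambda)J(\lambda)$ is a self-intertwiner of $\pi_\lambda$. By Theorem \ref{thm:VW}(4), $J(\lambda)$ intertwines $\pi_\lambda$ with $\pi^\theta_{-\lambda}$, i.e. $J(\lambda)\pi_\lambda(x)=\pi_{-\lambda}(\theta(x))J(\lambda)$, and likewise $J(-\lambda)\pi_{-\lambda}(y)=\pi_\lambda(\theta(y))J(-\lambda)$. Composing and specializing $y=\theta(x)$, the two occurrences of $\theta$ cancel (since $\theta^2=\id$) and one obtains $J(-\lambda)J(\lambda)\pi_\lambda(x)=\pi_\lambda(x)J(-\lambda)J(\lambda)$. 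For $\lambda$ in the dense set where $\pi_\lambda$ is irreducible, $J(-\lambda)J(\lambda)$ preserves $K$-types and commutes with $\pi_\lambda$, so Schur's lemma forces $J(-\lambda)J(\lambda)=\gamma(\lambda)\,\id$ for a scalar $\gamma(\lambda)$. To identify $\gamma(\lambda)$ I would apply both sides to the constant function $1$: since $J(\mu)1=c_P(\mu)\cdot 1$ by definition of $c_P$, we get $J(-\lambda)J(\lambda)1=c_P(\lambda)c_P(-\lambda)\cdot 1$, hence $\gamma(\lambda)=c_P(\lambda)c_P(-\lambda)$. This holds on a dense open subset of $\fa_\C^*$; as both $\lambda\mapsto J(\lambda)f$ and $c_P$ are meromorphic, the identity $J(-\lambda)J(\lambda)=c_P(\lambda)c_P(-\lambda)\,\id$ persists as an identity of meromorphic functions. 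Interchanging the roles of $\lambda$ and $-\lambda$ yields $J(\lambda)J(-\lambda)=c_P(-\lambda)c_P(\lambda)\,\id$, completing the first statement.

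For the unitarity on $i\fa^*$, recall that $\pi_\lambda$ is unitary precisely when $\lambda\in i\fa^*$; since $-\lambda\in i\fa^*$ as well, $\pi^\theta_{-\lambda}$ is unitary too. Taking the Hilbert-space adjoint of $J(\lambda)\pi_\lambda(x)=\pi^\theta_{-\lambda}(x)J(\lambda)$ and using $\pi_\lambda(x)^*=\pi_\lambda(x^{-1})$ and $\pi^\theta_{-\lambda}(x)^*=\pi^\theta_{-\lambda}(x^{-1})$, I would conclude that $J(\lambda)^*$ intertwines $\pi^\theta_{-\lambda}$ with $\pi_\lambda$. Therefore $J(\lambda)^*J(\lambda)$ commutes with $\pi_\lambda$ and is, by Schur and generic irreducibility, a nonnegative scalar $c(\lambda)\,\id$. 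Evaluating on the constant function, which has norm one for the normalized measure, gives $c(\lambda)=\langle J(\lambda)^*J(\lambda)1,1\rangle=\|J(\lambda)1\|^2=|c_P(\lambda)|^2$. Hence $J(\lambda)^*J(\lambda)=|c_P(\lambda)|^2\,\id$, so $c_P(\lambda)^{-1}J(\lambda)$ is an isometry; being a nonzero intertwiner onto the irreducible $\pi^\theta_{-\lambda}$, its range is a closed invariant subspace and therefore all of $L^2(\cB)$, so it is in fact unitary. These conclusions hold for generic $\lambda\in i\fa^*$ and extend to the regular points by continuity.

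The step requiring the most care, and the main obstacle, is the bookkeeping with the $\theta$-twist when composing the two operators: one must verify that the target $\pi^\theta_{-\lambda}$ of $J(\lambda)$ becomes, after the substitution $y=\theta(x)$, exactly the source $\pi_{-\lambda}$ of $J(-\lambda)$, so that the composite is a genuine self-intertwiner of $\pi_\lambda$ and not of some twisted representation. The remaining delicate point is purely formal: justifying that the scalar identities, established only on the generic locus where $\pi_\lambda$ is irreducible and both operators are regular, propagate to an identity of meromorphic functions on all of $\fa_\C^*$.
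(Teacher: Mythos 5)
Your proof of the composition identity is, step for step, the paper's own proof: the same $\theta$-twist cancellation $J(-\lambda)J(\lambda)\pi_\lambda(x)=J(-\lambda)\pi_{-\lambda}(\theta(x))J(\lambda)=\pi_\lambda(x)J(-\lambda)J(\lambda)$, Schur's lemma on the generic locus of irreducibility supplied by Theorem \ref{th-irredu}, evaluation on the constant function $1$ to identify the scalar as $c_P(\lambda)c_P(-\lambda)$, and propagation to an identity of meromorphic functions. For the unitarity statement your route differs mildly: the paper simply records the adjoint formula $[c_P(\lambda)^{-1}J(\lambda)]^*=c_P(-\lambda)^{-1}J(-\lambda)$ for $\lambda\in i\fa^*$ (where $\overline{\lambda}=-\lambda$; it follows from the kernel formula of Lemma \ref{lemma-JK} together with the symmetry $\alpha_P(k)=\alpha_P(k^{-1})$ of Lemma \ref{le-alphaSym}), so that part one of the theorem immediately yields both $U^*U=\id$ and $UU^*=\id$ for $U=c_P(\lambda)^{-1}J(\lambda)$; you instead re-run Schur's lemma on $J(\lambda)^*J(\lambda)$, identify the nonnegative scalar as $|c_P(\lambda)|^2$ via the constant function, and then must add the surjectivity step through the closed, invariant range and irreducibility of the target. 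Both arguments are sound at the paper's level of rigor; the paper's version is shorter because the adjoint identity plus the composition formula dispatches surjectivity automatically, while yours has the small advantage of never computing the adjoint explicitly. One point to flag in either version: speaking of $J(\lambda)^*$ presupposes that $J(\lambda)$ is bounded on $L^2(\cB)$ for $\lambda\in i\fa^*$, which lies outside the convergence region $\fa_\C^*(a_P)$, so that $J(\lambda)$ is a priori defined there only by meromorphic continuation on smooth (or $K$-finite) vectors; the paper elides this too, and your closing appeal to continuity does not by itself fill it --- the clean fix is to argue $K$-type by $K$-type, where everything is scalar.
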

\begin{proof} We have
\[J(-\lambda )J(\lambda )\pi_\lambda (x)=
 J(-\lambda )\pi_{-\lambda }(\theta (x))J(\lambda )
 =\pi_\lambda (x)J(-\lambda )J(\lambda )\]
as $\theta^2=\id$. As $\pi_\lambda $ is irreducible for $\lambda$ in an open dense subset of $\fa_\C^*$ it follows that the $J(-\lambda )J(\lambda )$ is a scalar multiple of the identity on that set. As $J(-\lambda )J(\lambda )1= c_P(-\lambda )c_P(\lambda )$ it follows that $J(-\lambda )J(\lambda )=c(-\lambda )c(\lambda )\id$ as a meromorphic function. The same argument shows that $J(\lambda )J(-\lambda )=c(-\lambda )c(\lambda )\id$ and (1) follows. 
The unitarity of $c_P(\lambda )^{-1}\cA (\lambda )$ for $\lambda $ purely imaginary follows from
$[c_P(\lambda )^{-1} J(\lambda )]^*=c_P(-\lambda )^{-1} J(- \lambda  )$ as in this case $\overline{\lambda}=-\lambda$.
\end{proof}

\section{The Spectrum Generating Operator}\label{section3}
\noindent
We will now specialize the situation to the case where $G$ is simple and $P$ is a maximal parabolic subgroup. Thus $\dim  \fa=1$. Based on our application later on, we will also assume that $\Delta = \{\alpha ,-\alpha\}$. Hence $\fn$ is abelian. Finally, we will assume that $K$ is semisimple and that $(K,L)$ is a symmetric pair corresponding to an involution $\tau : K\to K$. This implies that each irreducible representation of $K$ that occurs in  $L^2(\cB)$ has multiplicity one.

Denote by $\widehat K$ the set of equivalence classes of irreducible representations of $K$. For simplicity we will often write $\delta \in \widehat K$ where $\delta$ is an irreducible representation of $K$, i.e., we identify the representation with its equivalence class. Denote by $V_\delta$ the finite dimensional Hilbert space on which $\delta $ acts and let $d (\delta ):=\dim V_\delta$. Let
\[V^L:=\{u\in V_\delta \mid (\forall k\in L)\,\, \delta (k)u = u\}\, .\]
Then, as $K/L$ is a symmetric space, either $\dim V_\delta^L=0$ or $\dim V_\delta^L=1$. We say that $(\delta ,V_\delta)$ is \textit{spherical}
if $V_\delta^L\not=\{0\}$. Let $\wKL$ denote the subset of $\widehat K$ of spherical representations. To describe $\wKL$ write $\fk = \fl\oplus \fq$ where $\fq=\{X\in \fk\mid \tau (X)=-X\}$. Let $\fb$ be a maximal abelian subspace of $\fq$. Denote by
$\Delta_\fk$ the set of roots of $\fb_\C$ in $\fk_\C$. Note that $\Delta_\fk\subset i\fb^*$. Let $\Delta^+_\fk$ be a set of positive roots and let $\rho_\fk$ be the corresponding half-sum of roots, counted with multiplicities. Denote by $W_\fk$ the  Weyl group of $\Delta_\fk$. Finally we let
\[\Lambda^+:=\{\mu \in i\fb^* \mid (\forall \alpha \in \Delta^+_\fk)\,\, \frac{\langle \mu ,\alpha \rangle}{\langle \alpha ,\alpha \rangle }\in \N_0\}\, .\]
Then the map $\pi \mapsto \,(\textrm{highest weight of $\pi$})$ defines an injective map of $\wKL$ into $\Lambda^+$. This map is bijective if and only if $\cB$ is simply connected. In general $\wKL$ is isomorphic to a sublattice $\LB$ of $\Lambda^+$. We refer to  \cite{GGA}, p.\ 535, and the discussion in \cite{OSMathScand}, in particular the appendix,
for more details. Let $\mu\in\LB$ and  denote by $\pi_\mu$ the corresponding spherical representation.
Let $V_\mu$ be the space on which $\pi_\mu$ acts and let $(\cdot,\cdot)_{V_\mu}$ the inner product on $V_\mu$ making $\pi_\mu$ unitary. 
We also fix a unit vector $e_\mu\in V_\mu^L$. Define $T_\mu : V_\mu \to L^2(\cB )$ by 
\[T_\mu (v)(x):= d (\mu )^{1/2} (v,\pi_\mu (x)e_\mu )_{V_\mu}\, .\]
Then $T_\mu $ is an isometric $K$-intertwining operator between $\pi_\mu$ and the left regular representation $L$ of $K$ on $L^2(\cB)$. Let $L^2_\mu (\cB):=\Im (T_\mu)$. Then
$L$ is an irreducible $K$-representation on $L^2(\cB)$ and
\[L^2(\cB)\simeq_K \bigoplus_{\mu\in\LB}L^2_\mu (\cB)\, .\]

\begin{lemma}\label{le-DefEta} 
For $\mu \in\LB$ let $J_\mu (\lambda ):= J(\lambda)|_{L^2_\mu (\cB)}$. There exists a meromorphic function $\eta_\mu :\fa_\C^*\to \C$ such that
\begin{enumerate}
\item $J_\mu (\lambda)=\eta_\mu (\lambda )\id_{L^2_\mu (\cB)}$.
\item $\eta_{0} (\lambda )= c_P (\lambda )$,
\item $\eta_{\mu }(\lambda )\eta_\mu (-\lambda )= c_P (\lambda )c_P (-\lambda )$,
\end{enumerate}
\end{lemma}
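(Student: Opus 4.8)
The plan is to exploit the fact, established just above, that $J(\lambda)$ is a $K$-intertwining operator together with the multiplicity-one decomposition $L^2(\cB)\simeq_K\bigoplus_{\mu\in\LB}L^2_\mu(\cB)$ afforded by the symmetric-pair assumption on $(K,L)$. The three parts then follow almost formally from Schur's Lemma and from results proved earlier, so the content is mostly organizational rather than computational.

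\textbf{Part (1).} First I would observe that $L^2_\mu(\cB)=\Im(T_\mu)$ is $K$-irreducible and that, by the intertwining identity (\ref{eq-Intertwining}) restricted to $k\in K$ (where $\theta(k)=k$ since $K=G^\theta$), $J(\lambda)$ commutes with the left regular $K$-action $L$. Hence $J(\lambda)$ maps the $K$-isotypic component $L^2_\mu(\cB)$ into itself and restricts to a $K$-intertwining endomorphism $J_\mu(\lambda)$ of the irreducible module $L^2_\mu(\cB)$. By Schur's Lemma $J_\mu(\lambda)=\eta_\mu(\lambda)\,\id_{L^2_\mu(\cB)}$ for some scalar $\eta_\mu(\lambda)\in\C$. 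Meromorphy of $\lambda\mapsto\eta_\mu(\lambda)$ follows from Theorem \ref{thm:VW}(3): for $\lambda\in\fa_\C^*(a_P)$ and any fixed nonzero $f\in L^2_\mu(\cB)$ one has $\eta_\mu(\lambda)=\langle J(\lambda)f,f\rangle/\langle f,f\rangle$, which is holomorphic on the interior of $\fa_\C^*(a_P)$ by Theorem \ref{thm:VW}(2) and extends meromorphically to all of $\fa_\C^*$ by the functional equation $b(\lambda)J(\lambda)f=J(\lambda+4\rho)\pi_{\lambda+4\rho}(D(\lambda))f$ in Theorem \ref{thm:VW}(3).

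\textbf{Parts (2) and (3).} For (2), note that the trivial $K$-type $\mu=0$ corresponds to the constant function $1\in L^2_0(\cB)$, and by definition (\ref{cP}) we have $J(\lambda)1=c_P(\lambda)$; thus $\eta_0(\lambda)=c_P(\lambda)$. For (3), I would restrict the operator identity of Theorem \ref{th-Inverse}, namely $J(-\lambda)J(\lambda)=c_P(\lambda)c_P(-\lambda)\,\id$, to the subspace $L^2_\mu(\cB)$. Since both factors act as scalars there, $J_\mu(-\lambda)J_\mu(\lambda)=\eta_\mu(-\lambda)\eta_\mu(\lambda)\,\id_{L^2_\mu(\cB)}$, and comparing scalars gives $\eta_\mu(\lambda)\eta_\mu(-\lambda)=c_P(\lambda)c_P(-\lambda)$ as an identity of meromorphic functions.

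The only genuine subtlety—more a point requiring care than a real obstacle—is confirming that $J(\lambda)$ truly commutes with the $K$-action used to define the isotypic decomposition. The intertwining relation (\ref{eq-Intertwining}) relates $\pi_\lambda$ to the \emph{twisted} representation $\pi_{-\lambda}^\theta$, so one must check that upon restricting to $K$ these two actions coincide with the single left regular representation $L$ on $L^2(\cB)=L^2(K/L)$. This holds precisely because $\theta$ fixes $K$ pointwise and $a(kb)=e$ for $k\in K$, so that $\pi_\lambda(k)=\pi_{-\lambda}^\theta(k)=L(k)$ for all $k\in K$ independently of $\lambda$; once this is recorded, the Schur argument applies verbatim and the remaining steps are immediate.
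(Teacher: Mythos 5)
Your proposal is correct and takes essentially the same route as the paper's proof: restriction of the $G$-intertwining relation to $K$ (where $\pi_\lambda|_K=\pi_{-\lambda}^\theta|_K=L$, since $\theta$ fixes $K$ and $a(kb)=e$), then multiplicity one plus Schur's Lemma for (1), the normalization $J(\lambda)1=c_P(\lambda)$ for (2), and restriction of Theorem \ref{th-Inverse} to $L^2_\mu(\cB)$ for (3). The ``subtlety'' you flag at the end is precisely what the paper records in the sentence ``As $K$-representations, $\pi_\lambda$ and $\pi_{-\lambda}^\theta$ agree with $L$,'' and your added detail on meromorphy via Theorem \ref{thm:VW}(2)--(3) only makes explicit what the paper leaves implicit.
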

\begin{proof} 
$J (\lambda )$ is a $G$-intertwining operator between $\pi_\lambda$ and $\pi_{-\lambda}^\theta$ by Theorem \ref{thm:VW} (4).
As $K$-representations, $\pi_\lambda$ and $\pi_{-\lambda}^\theta$ agree with $L$. Hence $J(\lambda)$ is a $K$-intertwining operator of $L$ with itself. As each
$L_\mu^2(\cB)$ is irreducible and the multiplicity of $\pi_\mu $ in $L^2(\cB)$ is one, it follows by Schur's Lemma that we can define $\eta_\mu (\lambda )$ by (1). 
The claim now follows from our normalization
$J (\lambda) 1=c_P (\lambda )$ and Theorem \ref{th-Inverse}.
\end{proof}

We call $\{\eta_\mu (\lambda )\mid \mu\in\LB\}$ the $K$-spectrum of $J (\lambda )$.

Our aim is to determine the functions $\eta_\mu$ using the results from \cite{BOO}.  Fix $H_o\in \fa$ such that $\alpha (H_o)=1$. Fix a $G$-invariant $\R$-bilinear form
$\langle \, \cdot \, ,\, \cdot \,\rangle$ on $\fg$ such that
$\langle H_o, H_o\rangle = 1$. For $H\in\fa$ we have $\alpha (H)=
 \langle H, H_o\rangle $. Denote the  complex bilinear extension to
 $\fg_\C$ by the same symbol. Define a map $\omega :\fs_\C \to C^\infty (\cB)$ by
\[\omega (Y)(k):= \langle Y,\Ad (k)H_o\rangle \]
and note that
\[\omega (\Ad (h)Y)(k)=\langle \Ad (h)Y,\Ad (k)H_o\rangle = \langle Y, \Ad (h^{-1}k)H_o\rangle
=\omega (Y)(h^{-1}k)\, .\]
Thus $\omega $ is a $K$-intertwining operator.

Note that $\langle \, \cdot \, ,\,\cdot \, \rangle$ is negative definite on $\fk$.
Let $X_1,\ldots ,X_{\dim \fq}$ be an orthonormal basis of $\fq$ such that $X_1,\ldots ,X_p$, $p=\dim \fb$, is an orthonormal basis of $\fb$. Denote by $\Omega =- \sum_j X_j^2$ the corresponding positive definite Laplace operator on $\cB$. The action of $\Omega$ on functions on $\cB$ is--up to a sign--the same as that of the Casimir element of $\fk$. Thus $\Omega |_{L^2_\mu (\cB )} =\langle \mu +2\rho_\fk ,\mu\rangle \id$. For later reference we state this as a Lemma:
\begin{lemma}\label{le-eigenvalues} 
For $\mu \in \Lambda^+(\cB)$ let
\[\omega (\mu )=- \sum_{j=1}^{p} \mu (X_j)(\mu +2\rho_\fk)(X_j)=
- \sum_{j=1}^{p} \Big(\mu (X_j)^2 + 2\mu (X_j)\rho_\fk (X_j)\Big)\, .\]
Then
$
\Omega|_{L^2_{\mu}(\cB)}= \omega (\mu) \id$.
\end{lemma}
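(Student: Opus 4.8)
The plan is to prove the identity in two stages. The first stage identifies the restriction of $\Omega$ to each $K$-isotypic piece $L^2_\mu(\cB)$ with a Casimir eigenvalue; this is exactly the content announced in the paragraph immediately preceding the Lemma, so I will treat it as essentially given. The second, substantive stage is the purely linear-algebraic computation that rewrites that eigenvalue $\langle\mu+2\rho_\fk,\mu\rangle$ in the explicit form $\omega(\mu)$ using the chosen orthonormal basis. The only point that genuinely needs care throughout is keeping the sign conventions coherent.

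For the first stage I would argue as follows. Elements of $L^2_\mu(\cB)$ are right $L$-invariant functions on $K$, since $\cB=K/L$. Extend the given orthonormal basis $X_1,\dots,X_{\dim\fq}$ of $\fq$ to a basis of $\fk$ by adjoining an orthonormal basis $Y_1,\dots,Y_{\dim\fl}$ of $\fl$, all orthonormal with respect to the positive definite form $-\langle\,\cdot\,,\,\cdot\,\rangle$ on $\fk$ (so that $\langle X_i,X_j\rangle=-\delta_{ij}$). The correspondingly signed Casimir operator $C_\fk=-\sum_j X_j^2-\sum_i Y_i^2=\Omega-\sum_i Y_i^2$ agrees with $\Omega$ on right $L$-invariant functions, because each $Y_i$, acting as a right-invariant derivative, annihilates such a function and hence so does $Y_i^2$. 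Since $T_\mu$ is a $K$-isomorphism onto $L^2_\mu(\cB)$, the standard Casimir eigenvalue formula and Schur's Lemma give
\[
\Omega|_{L^2_\mu(\cB)}=C_\fk|_{L^2_\mu(\cB)}=\langle\mu+2\rho_\fk,\mu\rangle\,\id,
\]
which is precisely the identity recorded before the Lemma.

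The second stage expands $\langle\mu+2\rho_\fk,\mu\rangle$ through the basis of $\fb$. Identifying $\fb_\C^*$ with $\fb_\C$ via $\langle\,\cdot\,,\,\cdot\,\rangle$, the element $H_\xi\in\fb_\C$ representing $\xi\in\fb_\C^*$ is $H_\xi=-\sum_{j=1}^p \xi(X_j)X_j$, because $\langle X_i,X_j\rangle=-\delta_{ij}$. Consequently the induced form on $\fb_\C^*$ is
\[
\langle\xi,\zeta\rangle=\langle H_\xi,H_\zeta\rangle=-\sum_{j=1}^p \xi(X_j)\,\zeta(X_j).
\]
The minus sign is forced by the negative definiteness of $\langle\,\cdot\,,\,\cdot\,\rangle$ on $\fb$, and it is exactly the sign that renders $\langle\xi,\xi\rangle\ge 0$ for $\xi\in i\fb^*$. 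Applying this with $\xi=\mu+2\rho_\fk$ and $\zeta=\mu$ yields
\[
\Omega|_{L^2_\mu(\cB)}=\langle\mu+2\rho_\fk,\mu\rangle\,\id=-\sum_{j=1}^p \mu(X_j)\,(\mu+2\rho_\fk)(X_j)\,\id=\omega(\mu)\,\id,
\]
which is the assertion of the Lemma.

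I do not expect a genuine obstacle here: the Lemma is a reformulation of the eigenvalue statement that precedes it, and both stages are routine once set up. The one thing to watch is the sign convention, namely that \emph{orthonormal} must be read relative to the positive definite form $-\langle\,\cdot\,,\,\cdot\,\rangle$. This single choice simultaneously normalizes $\Omega=-\sum_j X_j^2$ to be positive, accounts for the ``up to a sign'' identification of $\Omega$ with the Casimir, and supplies the minus sign in the induced form on $\fb^*$, so that the signs combine coherently to produce $\omega(\mu)$.
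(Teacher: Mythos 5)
Your proof is correct and takes essentially the same route as the paper, which states the Lemma as an immediate consequence of the preceding paragraph: $\Omega$ coincides with the Casimir element of $\fk$ on right-$L$-invariant functions, Schur's Lemma gives the eigenvalue $\langle \mu+2\rho_\fk,\mu\rangle$ on $L^2_\mu(\cB)$, and expanding in the basis $X_1,\ldots,X_p$ of $\fb$ with the negative definite restriction of the form produces $\omega(\mu)$. You simply make explicit the Casimir identification and the sign bookkeeping that the paper leaves implicit.
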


For $f\in C^\infty (\cB)$ denote by $M (f ) : L^2(\cB)\to L^2(\cB)$ the multiplication operator $g\mapsto fg$.
\begin{theorem} Let $Y\in \fs$. Then $[\Omega,M(\omega (Y))]=2\pi_0(Y)$.
\end{theorem}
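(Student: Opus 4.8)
The plan is to expand the commutator by the Leibniz rule, transport everything into the language of $\pi_0^\infty$, and then match the resulting first-order operator against $2\pi_0^\infty(Y)$ separately in its vector-field (first-order) and multiplication (zeroth-order) parts. The starting observation is that each $X_j\in\fq\subset\fk$ acts on $C^\infty(\cB)$ by $\pi_0^\infty(X_j)$: since $a(k)=e$ for $k\in K$, the operator $\pi_\lambda^\infty(X)$ is independent of $\lambda$ for $X\in\fk$ and equals the left-regular action, so $\Omega=-\sum_j\pi_0^\infty(X_j)^2$. Each $\pi_0^\infty(X_j)$ is a first-order operator obeying $[\pi_0^\infty(X_j),M(f)]=M(\pi_0^\infty(X_j)f)$, and applying this twice gives
\begin{equation*}
[\Omega,M(\omega(Y))]=-\sum_j M\big(\pi_0^\infty(X_j)^2\,\omega(Y)\big)-2\sum_j M\big(\pi_0^\infty(X_j)\,\omega(Y)\big)\,\pi_0^\infty(X_j).
\end{equation*}
Because $\omega$ is $K$-equivariant, $\pi_0^\infty(X_j)\omega(Y)=\omega([X_j,Y])$ and $\pi_0^\infty(X_j)^2\omega(Y)=\omega([X_j,[X_j,Y]])$, so the right-hand side is an explicit first-order operator.

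Next I would split the target. Differentiating (\ref{piLambda}) at $\lambda=0$ along $x=\exp(tY)$ gives $\pi_0^\infty(Y)=D_Y+M(\phi_Y)$, where $D_Y f(b)=\frac{d}{dt}\big|_{t=0}f(\exp(-tY)\cdot b)$ is the vector field generated by the $G$-action and $\phi_Y(b)=-\frac{d}{dt}\big|_{t=0}\rho\big(\log a(\exp(-tY)\,b)\big)$ is the multiplication term coming from the cocycle $a(\cdot)^{-\rho}$. Comparing degrees, the asserted identity decouples into a first-order identity $D_Y=-\sum_j\omega([X_j,Y])\,\pi_0^\infty(X_j)$ and a zeroth-order identity $2\phi_Y=-\sum_j\omega([X_j,[X_j,Y]])$.

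Both identities are between $K$-equivariant functions of $Y\in\fs$ (conjugation by $L(k)$ preserves the splitting into vector-field and multiplication parts), and $K$ is transitive on $\cB$, so it suffices to check each at the base point $b_o$ for every $Y$. For the first-order identity at $b_o$ I would use the Iwasawa/Cartan decomposition of $-Y$ together with the identification $T_{b_o}\cB\cong\fq$, under which $\pi_0^\infty(X_j)|_{b_o}$ corresponds to $-X_j$; checking the cases $Y\in\fa$ and $Y=X_\alpha-\theta X_\alpha$ then reduces it to the invariance of $\langle\,\cdot\,,\,\cdot\,\rangle$. For the zeroth-order identity one rewrites $-\sum_j\omega([X_j,[X_j,Y]])(b_o)=-\langle\sum_j\ad(X_j)^2Y,H_o\rangle=-\langle Y,\sum_j\ad(X_j)^2H_o\rangle$; since $\sum_j\ad(X_j)^2H_o$ lies in $\fs^L=\fa$, it is orthogonal to the root directions and so both sides vanish there, while for $Y=H_o$ it becomes $\sum_j\langle[H_o,X_j],[H_o,X_j]\rangle$, which by skew-adjointness of $\ad(H_o)$ and $\ad(H_o)^2|_\fq=\alpha(H_o)^2\,\id=\id$ equals $\dim\fq=m_\alpha=2\rho(H_o)=2\phi_{H_o}(b_o)$.

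I expect the zeroth-order identity to be the main obstacle: one must recognize the derivative of the $\rho$-cocycle as a $\fq$-Casimir expression and then pin down the constant $2$ through the trace/dimension computation $\sum_j\|[H_o,X_j]\|^2=2\rho(H_o)$, which is where the normalization $\alpha(H_o)=1$ and the identity $\dim\fq=m_\alpha$ enter. The first-order matching is conceptually routine but requires careful sign bookkeeping in the identification $T_{b_o}\cB\cong\fq$, which is the point where an accidental sign error is most likely.
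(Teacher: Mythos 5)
Your plan founders at its very first step, the identification $\Omega=-\sum_j\pi_0^\infty(X_j)^2$. Write $dL(X)f(k)=\frac{d}{dt}\big|_{t=0}f(\exp(-tX)k)$ and $dR(X)f(k)=\frac{d}{dt}\big|_{t=0}f(k\exp(tX))$; you are realizing $\Omega$ through the left-regular action $dL=\pi_0^\infty|_{\fk}$. In the paper (and in \cite{BOO}, Theorem 2.3, which is all the paper's own proof consists of) the element $-\sum_j X_j^2\in U(\fk)$ acts through $dR$, i.e.\ by left-invariant differential operators: since $\sum_j X_j^2$ is $\Ad(L)$-invariant this descends to a $K$-\emph{invariant} operator on $\cB$, and since $dR(Z)f=0$ for $Z\in\fl$ and $f$ right-$L$-invariant, the $\fl$-part of the Casimir dies, so $\Omega$ coincides with minus the Casimir action --- which is exactly the paper's parenthetical remark and the only reading under which Lemma \ref{le-eigenvalues} (scalar action $\omega(\mu)$ on \emph{every} $K$-type) holds. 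Your operator differs from it by $\sum_i dL(Z_i)^2$, $\{Z_i\}$ an orthonormal basis of $\fl$, which is nonzero and not $K$-invariant whenever $\fl\neq\{0\}$. Concretely, for $G=\SL(3,\R)$, $\cB=\mathbb{RP}^2$, $K=\SO(3)$: with $\fq=\mathrm{span}(R_1,R_2)$ and $\fl=\R R_3$ (angular-momentum fields $L_i=dL(R_i)$), one has $(L_1^2+L_2^2)Y_2^m=(-6+m^2)Y_2^m$, not scalar on the degree-$2$ $K$-type; so your $\Omega$ cannot enter a spectrum-generating identity at all, and your Leibniz expansion, while internally correct, computes the commutator of the wrong operator.

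The error then infects the reduction-to-$b_o$ step: the families $Y\mapsto-\sum_j M(\omega([X_j,Y]))\,\pi_0^\infty(X_j)$ and $Y\mapsto-\sum_j\omega([X_j,[X_j,Y]])$ are \emph{not} $K$-equivariant, because $\Ad(k)$ carries $\fq$ to $\Ad(k)\fq\neq\fq$ and the $\fq$-restricted sums are not independent of the choice of subspace (only the full sum over an orthonormal basis of $\fk$ would be). Your checks at $b_o$ are in fact correct --- including $\pi_0^\infty(Y)=D_Y+\rho(H_o)M(\omega(Y))$ and the constant count $\sum_j\langle[H_o,X_j],[H_o,X_j]\rangle=\dim\fq=m_\alpha=2\rho(H_o)$ --- and they succeed for a treacherous reason: at the single point $b_o$ the vector fields $dL(Z_i)$ vanish and $[Z_i,H_o]=0$ kills the zeroth-order discrepancy, so the two realizations of $\Omega$ have the same commutator coefficients \emph{there}. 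But both of your decoupled identities fail at general points: using $[\fs,H_o]=\fq$, the first-order identity at $b=u\cdot b_o$ is equivalent to $W-P_{\Ad(u^{-1})\fq}W\in\fl$ for all $W\in\fq$ ($P$ the orthogonal projection), which already fails in $\mathfrak{so}(3)$ (take $\fl=\R R_3$, $\fu=\mathrm{span}(R_1,R_3)$, $W=R_2$). The repair is structural, not cosmetic: define $\Omega$ via $dR$; then $\Omega$ is $K$-invariant, $[\Omega,M(\omega(Y))]$ is genuinely equivariant in $Y$, the evaluation at $b_o$ becomes legitimate, and your base-point computations go through verbatim with $dR$ in place of $dL$ (they agree at $k=e$ since $dL(X)^2f(k)=dR(\Ad(k^{-1})X)^2f(k)$). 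You should also justify $\fs^L=\fa$, which you use for the zeroth-order part; it holds in the Grassmannian cases but is asserted without proof.
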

\begin{proof} This is Theorem 2.3 in \cite{BOO}.\end{proof}

For $\mu \in\LB$ define $\Phi_\mu :  L^2_\mu (\cB)\otimes \fs_\C \to L^2(\cB)$ by
\[\Phi_\mu ( \varphi \otimes Y):= M(\omega (Y))\varphi\, .\]
Observe that for $k \in K$, $Y \in \fs_\C$ and $\varphi\in L^2_\mu(\cB)$ we have
\[ L_k\big( M(\omega(Y)\varphi\big)=\big(L_k\omega(Y)\big)(L_k\varphi)=M\big(\omega(\Ad(k)Y)\big)(L_k\varphi) \]
with $\Ad(k)Y \in \fs_\C$ and $L_k\varphi \in L^2_\mu(\cB)$. Hence $\Phi_\mu$ is $K$-equivariant and $\Im \Phi_\mu$ is $K$-invariant.
Define a finite subset $S(\mu)\subset \LB$ by
\[\Im \Phi_\mu \simeq_K \bigoplus_{\sigma \in S(\mu )} L^2_\sigma (\cB)\, .\]
Denote by $\pr_\sigma $ the orthogonal projection $L^2(\cB)\to L^2_\sigma (\cB)$.

\begin{lemma} Assume that $\mu\in \LB$. Let $\sigma\in S(\mu)$, $Y\in \fs_\C$ and $r\alpha \in\fa_\C^*$. Let
\begin{equation} 
\label{eq:omegamusigma}
\omega_{\sigma\mu} (Y) : = \pr_\sigma\circ M( \omega (Y))|_{L^2_\mu (\cB)} :
L^2_\mu (\cB)\to L^2_\sigma (\cB)\, .
\end{equation}
Then
\begin{equation}\label{eq-spectGen1}
\pr_\sigma\circ \pi_{r\alpha} (Y)|_{L^2_\mu (\cB)}=\frac{1}{2}(\omega (\sigma )-\omega (\mu )+2r )\omega_{\sigma\mu} (Y) \, .\end{equation}
\end{lemma}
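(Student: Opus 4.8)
The plan is to reduce the statement to two facts already available: the commutator identity $[\Omega,M(\omega(Y))]=2\pi_0(Y)$ from \cite{BOO} and the eigenvalue computation $\Omega|_{L^2_\mu(\cB)}=\omega(\mu)\id$ of Lemma \ref{le-eigenvalues}. Here and below we write $\pi_\lambda(Y):=\pi^\infty_\lambda(Y)$ for the derived action of $Y$. The only genuinely new ingredient is an explicit formula comparing the $\lambda$-dependent operator $\pi_{r\alpha}(Y)$ with the $\lambda$-independent one $\pi_0(Y)$; everything else is then a short manipulation with $\pr_\sigma$ and scalars.

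First I would prove the infinitesimal identity
\[
\pi_{r\alpha}(Y)=\pi_0(Y)+r\,M(\omega(Y))\qquad (Y\in\fs_\C)
\]
as operators on $C^\infty(\cB)$. The $\lambda$-dependence in the defining formula (\ref{piLambda}) sits entirely in the scalar factor $a(x^{-1}b)^{-\lambda-\rho}$. Differentiating (\ref{piLambda}) at $x=\exp(tY)$, $t=0$, with $b=k\cdot b_o$, and using $a(k)=e$, the difference $\pi_{r\alpha}(Y)-\pi_0(Y)$ is therefore the multiplication operator $f\mapsto -r\alpha(H'(0))\,f$, where $H(t):=\log a(\exp(-tY)k)\in\fa$ and $H(0)=0$. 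By complex linearity it suffices to treat $Y\in\fs$.

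The heart of the matter, and the step I expect to require the most care, is the identification $-\alpha(H'(0))=\omega(Y)(k)$. Using the left $K$-invariance of $a_P$ I would rewrite $a(\exp(-tY)k)=a(\exp(-t\Ad(k^{-1})Y))$, so that $H'(0)$ equals $-1$ times the infinitesimal Iwasawa projection of $Z:=\Ad(k^{-1})Y\in\fs$ onto $\fa$. The key structural observation is that this projection, a priori taken along $\fk+\fm+\fn$, agrees with the orthogonal projection for $\langle\cdot,\cdot\rangle$: one checks $\fa\perp(\fk+\fm+\fn)$ while $\fg=\fa\oplus(\fk+\fm+\fn)$, which forces $\fk+\fm+\fn=\fa^\perp$. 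As $\dim\fa=1$ and $\langle H_o,H_o\rangle=1$, the projection of $Z$ equals $\langle Z,H_o\rangle H_o$, so that $\alpha(H'(0))=-\langle Z,H_o\rangle=-\langle Y,\Ad(k)H_o\rangle=-\omega(Y)(k)$. With $\alpha(H_o)=1$ this yields the displayed identity.

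Finally I would assemble the result. Applying $\pr_\sigma$ and restricting to $L^2_\mu(\cB)$ in the infinitesimal identity gives
\[
\pr_\sigma\circ\pi_{r\alpha}(Y)|_{L^2_\mu(\cB)}
=\pr_\sigma\circ\pi_0(Y)|_{L^2_\mu(\cB)}+r\,\omega_{\sigma\mu}(Y),
\]
the last term by the definition (\ref{eq:omegamusigma}) of $\omega_{\sigma\mu}$. For the first term I replace $\pi_0(Y)$ by $\tfrac12[\Omega,M(\omega(Y))]$. Since $\Omega$ acts as the scalar $\omega(\sigma)$ on $L^2_\sigma(\cB)$ (so $\pr_\sigma\Omega=\omega(\sigma)\pr_\sigma$) and as $\omega(\mu)$ on $L^2_\mu(\cB)$, expanding the commutator gives $\tfrac12\big(\omega(\sigma)-\omega(\mu)\big)\omega_{\sigma\mu}(Y)$. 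Adding the two contributions produces the asserted $\tfrac12\big(\omega(\sigma)-\omega(\mu)+2r\big)\omega_{\sigma\mu}(Y)$.
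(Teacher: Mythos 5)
Your proposal is correct, but it is worth noting that the paper itself gives no argument here: its proof is the single line ``This is Corollary 2.6 in \cite{BOO}.'' What you have done is reconstruct, in the simpler setting of this paper (one-dimensional $\fa$, multiplicity-free $K$-types), the mechanism behind that corollary. Your two ingredients are exactly the right ones: the commutator identity $[\Omega,M(\omega(Y))]=2\pi_0(Y)$ (which the paper does quote, as Theorem 2.3 of \cite{BOO}) and the affine cocycle formula $\pi_{r\alpha}(Y)=\pi_0(Y)+r\,M(\omega(Y))$, which is the substantive new step and which you prove correctly. In particular, your identification of the infinitesimal Iwasawa-type $\fa$-projection with the orthogonal projection is sound: the invariant form on the simple algebra $\fg$ is a multiple of the Killing form, so $\fk\perp\fs$ and $\fg_0\perp\fg_{\pm\alpha}$, giving $\fk+\fm+\fn\subseteq\fa^\perp$, while positivity of $\langle\cdot,\cdot\rangle$ on $\fa$ gives $\fa\cap(\fk+\fm+\fn)=\{0\}$, so the projection of $Z=\Ad(k^{-1})Y$ is $\langle Z,H_o\rangle H_o$ and $-\alpha(H'(0))=\langle Y,\Ad(k)H_o\rangle=\omega(Y)(k)$; the signs coming from $x^{-1}=\exp(-tY)$ and from $-\lambda-\rho$ are handled consistently, with the $\rho$-contribution cancelling in the difference $\pi_{r\alpha}(Y)-\pi_0(Y)$. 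The final assembly, using $\pr_\sigma\Omega=\omega(\sigma)\pr_\sigma$ and $\Omega|_{L^2_\mu(\cB)}=\omega(\mu)\id$ from Lemma \ref{le-eigenvalues}, is routine and correct; the only points you might make explicit are that $L^2_\mu(\cB)\subset C^\infty(\cB)$ (so all operators are defined there) and that both sides of (\ref{eq-spectGen1}) are complex linear in $Y$, so the reduction from $\fs_\C$ to $\fs$ is legitimate --- which you do note. The net effect is that your write-up makes the paper self-contained at a point where it defers entirely to the literature, at the cost of about a page; the paper's citation buys brevity and the greater generality of \cite{BOO}, where the same identity is established for more general maximal parabolics.
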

\begin{proof} This is Corollary 2.6 in \cite{BOO}.\end{proof}
\begin{theorem}\label{th-3.6} Let $\mu \in \LB$, $\sigma \in S (\mu )$ and $\lambda=r\alpha\in\fa_\C^*$. Then
\begin{equation}\label{eq-spectGen3}
\frac{\eta_\sigma (\lambda )}{\eta_\mu (\lambda)} =\frac{2r -\omega (\sigma )+\omega (\mu )}{2r + \omega (\sigma )-\omega (\mu )}\, .\end{equation}
The $K$-spectrum $\{\eta_\mu (\lambda )\}_{\mu\in \LB}$ and hence $J (\lambda)$ is uniquely determined by
(\ref{eq-spectGen3}) and the normalization $\eta_{0}(\lambda )= c_P (\lambda )$.
\end{theorem}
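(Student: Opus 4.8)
The plan is to derive the recursion relation (\ref{eq-spectGen3}) directly from the spectrum-generating identity (\ref{eq-spectGen1}), exploiting the fact that $J(\lambda)$ intertwines $\pi_\lambda$ with $\pi_{-\lambda}^\theta$ and acts as the scalar $\eta_\mu(\lambda)$ on each $K$-type $L^2_\mu(\cB)$. The key point to leverage is that $J(\lambda)$ is a $K$-intertwining operator, while the operators $\pi_{r\alpha}(Y)$ for $Y\in\fs$ move between different $K$-types according to the geometry encoded in $S(\mu)$. I want to compare how $J(\lambda)$ interacts with a $\fg$-action that shifts a given $K$-type $\mu$ to a neighboring type $\sigma\in S(\mu)$.

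First I would fix $\mu\in\LB$, $\sigma\in S(\mu)$, $Y\in\fs_\C$ and $\lambda=r\alpha$, and take some $\varphi\in L^2_\mu(\cB)$. The strategy is to apply the intertwining relation (\ref{eq-Intertwining}) of Theorem \ref{thm:VW}(4) in its infinitesimal form: differentiating $J(\lambda)\pi_\lambda(x)=\pi_{-\lambda}^\theta(x)J(\lambda)$ at $x=\exp(tY)$ with $Y\in\fs$ gives
\[
J(\lambda)\,\pi_\lambda(Y) = \pi_{-\lambda}^\theta(Y)\,J(\lambda)\, .
\]
Since $\theta(Y)=-Y$ for $Y\in\fs$, the right-hand side becomes $-\pi_{-\lambda}(Y)J(\lambda)$, so that
$J(\lambda)\pi_\lambda(Y)=-\pi_{-\lambda}(Y)J(\lambda)$. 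Now I would apply $\pr_\sigma$ on the left and restrict to $L^2_\mu(\cB)$. On the left, $J(\lambda)$ acts as $\eta_\sigma(\lambda)$ after projection to $L^2_\sigma(\cB)$, yielding $\eta_\sigma(\lambda)\,\pr_\sigma\circ\pi_\lambda(Y)|_{L^2_\mu(\cB)}$; on the right, $J(\lambda)$ acts first as the scalar $\eta_\mu(\lambda)$ on $L^2_\mu(\cB)$, giving $-\eta_\mu(\lambda)\,\pr_\sigma\circ\pi_{-\lambda}(Y)|_{L^2_\mu(\cB)}$.

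Next I would substitute the explicit formula (\ref{eq-spectGen1}). Writing $\lambda=r\alpha$, the left-hand factor becomes $\tfrac12(\omega(\sigma)-\omega(\mu)+2r)\,\omega_{\sigma\mu}(Y)$, and the right-hand factor (with $-\lambda=(-r)\alpha$) becomes $\tfrac12(\omega(\sigma)-\omega(\mu)-2r)\,\omega_{\sigma\mu}(Y)$. Assuming $\omega_{\sigma\mu}(Y)$ is not identically zero---which holds for a suitable choice of $Y$ precisely because $\sigma\in S(\mu)$, i.e., $L^2_\sigma(\cB)$ occurs in $\Im\Phi_\mu$---I can cancel the common factor $\omega_{\sigma\mu}(Y)$ and the factor $\tfrac12$ to obtain
\[
\eta_\sigma(\lambda)\,(\omega(\sigma)-\omega(\mu)+2r)
=-\eta_\mu(\lambda)\,(\omega(\sigma)-\omega(\mu)-2r)\, ,
\]
which rearranges exactly to (\ref{eq-spectGen3}). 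Finally, the uniqueness claim follows by a connectedness argument: starting from $\eta_0(\lambda)=c_P(\lambda)$ and applying (\ref{eq-spectGen3}) successively, every $\eta_\mu(\lambda)$ is determined, provided the ``adjacency'' graph on $\LB$ with edges given by $\sigma\in S(\mu)$ is connected and the denominators $2r+\omega(\sigma)-\omega(\mu)$ do not vanish identically in $r$, so that the ratios make sense as meromorphic functions.

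The main obstacle I anticipate is the nonvanishing of $\omega_{\sigma\mu}(Y)$ needed to cancel it, together with the connectivity of the spectrum-generating recursion. The definition of $S(\mu)$ via $\Im\Phi_\mu$ guarantees that for each $\sigma\in S(\mu)$ there is at least one $Y\in\fs_\C$ and one $\varphi\in L^2_\mu(\cB)$ with $\pr_\sigma M(\omega(Y))\varphi\neq0$, so $\omega_{\sigma\mu}$ is a nonzero intertwiner; since $L^2_\mu$ and $L^2_\sigma$ are irreducible and multiplicity-free, Schur's Lemma makes $\omega_{\sigma\mu}$ a nonzero multiple of the (unique up to scalar) $K$-isomorphism, hence invertible and in particular nonzero as an operator, which justifies the cancellation at the operator level rather than merely pointwise. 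Establishing that the recursion (\ref{eq-spectGen3}) propagates from $\mu=0$ to all of $\LB$ reduces to knowing that the graph of $K$-types linked by the $\fs$-action is connected; this is where the specific structure of the maximal parabolic with $\Delta=\{\alpha,-\alpha\}$ and the symmetric pair $(K,L)$ enters, and I would verify it in the concrete Grassmannian realization of Section \ref{se-Grassman} rather than abstractly here.
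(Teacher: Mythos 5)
Your derivation of the ratio formula is exactly the paper's proof: both apply $J(\lambda)$ to (\ref{eq-spectGen1}), use the infinitesimal intertwining relation $J(\lambda)\circ\pi_\lambda(Y)=\pi_{-\lambda}^\theta(Y)\circ J(\lambda)=-\pi_{-\lambda}(Y)\circ J(\lambda)$ for $Y\in\fs$ together with the fact that $J(\lambda)$ commutes with $\pr_\sigma$, and then cancel $\omega_{\sigma\mu}(Y)$. Your justification of that cancellation --- for each $\sigma\in S(\mu)$ the very definition of $S(\mu)$ supplies some $Y$ with $\omega_{\sigma\mu}(Y)\neq 0$, and Schur's Lemma then makes it a nonzero multiple of the unique $K$-intertwiner --- is actually cleaner than the paper's passing remark that $\omega_{\sigma\mu}(Y)$ is nonzero for generic $\lambda$, since $\omega_{\sigma\mu}$ does not depend on $\lambda$ at all. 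The one point where you diverge is the uniqueness statement: you propose to verify connectivity of the adjacency graph on $\LB$ concretely in the Grassmannian realization of Section \ref{se-Grassman}, whereas the paper gets it abstractly and for free from generic irreducibility of $\pi_\lambda$ (Theorem \ref{th-irredu}). Indeed, $\pi_\lambda(\fk)$ preserves each $K$-type, and by (\ref{eq:omegamusigma}) the $\fs$-action moves $L^2_\mu(\cB)$ only into the types $\sigma\in S(\mu)$; hence the closed span of the $K$-types reachable from the trivial one is a nonzero closed invariant subspace, which for generic $\lambda$ must be all of $L^2(\cB)$. This closes the uniqueness claim already in the abstract setting of the theorem, with no case-by-case work, and is the argument you should prefer over deferring to Section \ref{se-Grassman}.
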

\begin{proof} 
Applying  $J (\lambda)$ to (\ref{eq-spectGen1}) from the left, using that
$J (\lambda )$ commutes with $\pr_\sigma$ and that $J (\lambda ) \circ \pi_\lambda (Y)=\pi_{-\lambda}^\theta (Y)\circ J (\lambda) = - \pi_{-\lambda } (Y)\circ J (\lambda ) $, we obtain:
\[(\omega (\sigma )-\omega (\mu )+2r)\eta_\sigma (\lambda )\omega_{\sigma \mu}(Y)
=-(\omega (\sigma )-\omega (\mu )-2r)\eta_\mu (\lambda )\omega_{\sigma\mu}(Y)\, .\]
As $\omega_{\sigma\delta}(Y)$ is non-zero for generic $\lambda$ it can be canceled out.  The last statement follows from the fact that
$\pi_\lambda$ is irreducible for generic $\lambda$, hence iterated application of  (\ref{eq:omegamusigma}) will in the end reach all $K$-types starting from the trivial $K$-type.
\end{proof}

\section{The case of the Grassmann Manifolds}\label{se-Grassman}
\noindent
From now on $\K $ denotes the one of the fields $\R$ or $\C$, or the skew field $\H$ of quaternions. We let $G=\SL (n+1,\K)$.
For discussion about the determinant function on the space of  quaternionic matrices see
\cite{As96,GRW}. We will use that if $x\in \rM (n+1,\K)$ then $|\det_\R x|=|\det_\K x|^d$, where $d=\dim_\R \K$ is equal to $1,2, 4$ for $\K=\R,\C,\H$, respectively. Denote by $\mathrm{G}_{p}(\K)$ the Grassmann manifold of $k$-dimensional subspaces of $\K^{n+1}$. 
Note that $\rG_{p}(\K )\simeq \rG_{n+1-p}(\K )$. So we can assume without loss of generality that $2p \leq n+1$. We set $q=n+1-p$. Hence $q \geq p$.

In this section we show that $\mathrm{G}_{p}(\K)=G /P = K/L$, where $P$ is a maximal parabolic subgroup of $G$. Furthermore $K/L$ is a symmetric space. Therefore the results from the previous sections apply to this case. We introduce the $\CosL$-transform for $K/L$ 
and then show that it agrees with the intertwining operator $J(\lambda)$.  In the next section we then apply the spectrum generating operator to determine the $K$-spectrum.

Define an invariant $\R$-bilinear form on $\fg$ by
\[\langle X,Y\rangle :=\frac{n+1}{pq}\Re(\Tr (XY))\, .\]
Denote by $z\mapsto \bar z$ the conjugation in $\K$ and for $x=(x_{\nu\mu} )\in \rM (n+1,\K)$ let
$\overline{x}:= (\overline{x}_{\nu \mu})$ and $x^*:=(\overline{x}_{\mu\nu})=\overline{x}^t$. The homomorphism
$\theta  : G \to G$, $x\mapsto (x^{-1})^*$ is a  Cartan involution on $G$. The corresponding Cartan involution on $\fg$ is $\theta (X)=-X^*$. We have
\begin{eqnarray*}
K&=&\SU(n+1,\K):=\{x\in G\mid x^*=x^{-1}\}\\
\fk &=& \{X\in \rM (n+1,\K )\mid \text{$X^*=-X$ and $\Tr (X)=0$}\}\\
\fs&=&\{X\in\rM (n+1,\K)\mid \text{$X^*=X$ and $\Tr (X)=0$}\}
\end{eqnarray*}
In the notation in \cite{Sig} we have $G=\SL (n+1,\R)$ and $K=\SO (n+1)$ for $\K=\R$,  $G=\SL (n+1,\C)$ and $K=\SU (n+1)$ for $\K=\C$, and $G=\SU^* (2(n+1))$ and $K=\Sp (n+1)$ for $\K=\H$.

For $k\in \N$ denote by $\rI_k$ the $k\times k$ identity matrix. We write matrices as blocks
\[\begin{pmatrix} X & Y \\ W & Z\end{pmatrix}\,, \qquad
\text{$X\in \rM_{p\times p}(\K)$\,, $Y\in \rM_{p\times q}(\K)$\,,
$W\in \rM_{q\times p}(\K)$ and $Z\in \rM_{q\times q}(\K )$\,.}\]
Let
\[H_o=\begin{pmatrix} \frac{q}{n+1}\rI_p & 0\cr 0 &
-\frac{p}{n+1}\rI_q\end{pmatrix}\in \fs\, ,\]
and define
$\fa :=\R H_o$, and $\fm=\{X\in \fz_\fg (\fa)\mid
\langle X,H_o\rangle =0\}$. Then
$\fz (\fm ) \cap \fs =\fa$. We have $\Delta =\{\alpha, -\alpha\}$ where $\alpha (H_o)=1$.
As $\langle H_o,H_o\rangle =1$ it follows that $\langle \,\cdot \, , \,\cdot \, \rangle$ agrees
with the invariant form from last section.

Let $\Delta^+ =\{\alpha \}$.
Then
\begin{eqnarray*}
\fm\oplus \fa & =&\left\{\left. M(X,Y)=\begin{pmatrix}
X & 0\cr
0 & Y\end{pmatrix}\, \right|\, \begin{matrix}
X\in \rM(p,\K )\\ Y\in \rM(q,\K)\end{matrix} \text{ and } \Tr X + \Tr Y=0\right\}\\
&=&\fs (\fg\fl (p,\K)\times \fg\fl (q,\K))\, ,\\
\fn &=&\left\{\left. N (X):=\begin{pmatrix}
0_{pp} & X \\ 0_{qp}& 0_{qq}
\end{pmatrix}
\, \right|\,
X\in M(p\times q,\K)\right\}
\simeq M(p\times q,\K)\\
MA&=&\left\{\left. m(a,b):=\begin{pmatrix}
a & 0\cr
0 & b\end{pmatrix}\, \right|\,
\begin{matrix} a\in \GL(p,\K )\\ b\in \GL(q, \K)\end{matrix}
\text{ and } \det a \,\det  b=1\right\}\\
&=&\rS (\GL (p,\K)\times \GL (q,\K))\\
N&=&\left\{\left. n(X)=\begin{pmatrix} \rI_p & X\\
0_{qp} & \rI_q\end{pmatrix}\, \right|\, X\in \rM (p\times q,\K)\right\}\\
P&=&  \left\{\left. p(a,b;X):=\begin{pmatrix}
a & X\cr
0 & b\end{pmatrix}\, \right|\,
\begin{matrix} a\in \GL(p,\K ) \\
b\in \GL(q, \K)\end{matrix}\,,\; \det a \,\det  b=1,\, X\in \rM (p\times q,\K)\right\}\\
L&=& \rS (\rU(p,\K)\times \rU(q,\K))\, .
\end{eqnarray*}

Let $e_1,\ldots ,e_{n+1}$ be the standard basis  for $\K^{n+1}$ and let $b_o=\K e_1\oplus \ldots \oplus \K e_p\in
\Gr_{p} (\K )$. Then $\Gr_{p}  (\K )=K\cdot b_o \simeq K/L = G/P$. Thus we can introducce the notation $\cB =\Gr_p(\K)$, which agrees with the one from the previous sections. Note that $L=\rS (\rO (p)\times \rO (q))$ for $\K =\R$, $L=\rS (\rU (p )\times \rU (q))$ for $\K=\C$, and $L=\Sp (p)\times\Sp (q)$ for $\K=\H$. In particular $\cB$ is simply connected for $\K =\C$ and $\H$.

Applying $\theta$ to $\fn$, we see that
\[
\bar \fn=\left\{\bar N(Y)=\left. \begin{pmatrix} 0_{pp} & 0_{pq}\\ Y & 0_{qq}\end{pmatrix}\,\right|\,
Y\in \rM_{q\times p}(\K )\right\}\simeq  \rM (q\times p, \K)
\]
and
\[\oN =\left\{\left. \bar n (Y) =\begin{pmatrix} \rI_p & 0_{pq}\\
Y & I_{q}\end{pmatrix}\, \right|\, Y\in \rM (q\times p, \K)\right\}\, .\]
The action of $MA$ on $\fn$, respectively $\bar \fn$, is given by
\[m(a,b)N (X) m(a^{-1},b^{-1})=N (aXb^{-1}) \text{ and } m(a,b)\bar N(Y) m(a^{-1},b^{-1})=
\bar N (bYa^{-1})\, .\]

\begin{lemma} $\displaystyle{\oN P
=\left\{\left. \begin{pmatrix} A & B\\
C& D\end{pmatrix}\in G\, \right|\, A\in \GL (p,\K)\right\}}\, .$
\end{lemma}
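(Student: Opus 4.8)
The plan is to prove the stated equality of subsets of $G$ by establishing the two inclusions, the first by a one-line block multiplication and the second by an explicit block-triangular (Gaussian) factorization. Write a general $g\in G$ in $(p,q)$-block form as $g=\begin{pmatrix} A & B\\ C & D\end{pmatrix}$, and recall the explicit parametrizations above: $\bar n(Y)=\begin{pmatrix} \rI_p & 0\\ Y & \rI_q\end{pmatrix}\in\oN$ and $p(a,b;X)=\begin{pmatrix} a & X\\ 0 & b\end{pmatrix}\in P$, the latter subject to $a\in\GL(p,\K)$, $b\in\GL(q,\K)$, $X\in\rM(p\times q,\K)$ and $\det a\,\det b=1$. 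For the inclusion $\oN P\subseteq\{g\mid A\in\GL(p,\K)\}$ I would simply compute
\begin{equation*}
\bar n(Y)\,p(a,b;X)=\begin{pmatrix} a & X\\ Ya & YX+b\end{pmatrix},
\end{equation*}
whose upper-left block equals $a\in\GL(p,\K)$; hence every element of $\oN P$ has invertible $(1,1)$-block.

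For the reverse inclusion, assume $A\in\GL(p,\K)$ and introduce the Schur complement $S:=D-CA^{-1}B$. The decisive step is the factorization
\begin{equation*}
\begin{pmatrix} A & B\\ C & D\end{pmatrix}
=\begin{pmatrix} \rI_p & 0\\ CA^{-1} & \rI_q\end{pmatrix}
\begin{pmatrix} A & B\\ 0 & S\end{pmatrix},
\end{equation*}
which is a direct computation using only the invertibility of $A$ and is therefore valid over the possibly noncommutative $\K$. The first factor is $\bar n(CA^{-1})\in\oN$, and the second factor is $p(A,S;B)$; so it remains only to verify that the second factor genuinely lies in $P$, i.e. that $S\in\GL(q,\K)$ and that the determinant relation $\det A\,\det S=1$ holds.

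This verification is the sole delicate point, and it is where the quaternionic case needs attention. Taking determinants in the factorization, using that the block-unitriangular first factor has determinant $1$ and that $\det$ of the block-triangular second factor is $\det A\,\det S$ (which one sees by the further splitting into $\diag(A,S)$ times an upper block-unitriangular matrix), multiplicativity gives $1=\det g=\det A\cdot\det S$. Hence $\det S=(\det A)^{-1}\neq 0$, so $S$ is invertible and the condition $\det A\,\det S=1$ defining $P$ holds automatically. For $\K=\R,\C$ this is the classical Schur-complement determinant identity; for $\K=\H$ one reads $\det$ as the reduced norm (equivalently the Dieudonn\'e determinant, or $\det_\C$ after the embedding $\H\hookrightarrow\rM(2,\C)$), whose multiplicativity and its value $1$ on the unitriangular factors are precisely the determinant facts recorded in \cite{As96,GRW}. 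With $S$ shown invertible, the factorization reads $g=\bar n(CA^{-1})\,p(A,S;B)\in\oN P$, which gives the reverse inclusion and completes the proof.
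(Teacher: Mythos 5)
Your proof is correct and follows essentially the same route as the paper: one inclusion by direct block multiplication, the other via the Schur-complement factorization $g=\bar n(CA^{-1})\,p(A,\,D-CA^{-1}B;\,B)$, which is exactly the paper's choice $a=A$, $X=a^{-1}B$, $Y=Ca^{-1}$, $b=D-Ca^{-1}B$. The only difference is that you spell out the verification that the Schur complement is invertible and that $\det A\,\det S=1$ (including the quaternionic/Dieudonn\'e determinant subtleties), a point the paper leaves implicit; this is a correct and welcome refinement, not a different argument.
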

\begin{proof} A simple calculation shows that
\begin{equation}\label{eq-alphaFactor}
\bar n (Y) m(a,b)n (X) =\begin{pmatrix} a &aX\\ Ya & YaX+b\end{pmatrix}\,,
\end{equation}
proving one of the inclusions. On the other hand, let $x=\begin{pmatrix} A & B\\
C&D\end{pmatrix} $ be such that $A$ is invertible. Define
$a = A$, $X =a^{-1}B$, $Y=Ca^{-1}$, and $b=D-Ca^{-1}B$. Then
$m(a,b)\in MA$ and $\bar n (Y ) m(a,b)n (X)=x$.
\end{proof}

Let $b\in \Gr_p (\K)$ and view $b$ as a $dp$-dimensional real vector space. Fix a convex subset $E\subset b$ containing the zero vector, such that the volume of $E$ is one.
For $c\in \Gr_p (\K )$ let $P_c : \K^n\to c$ denote the orthogonal projection onto $c$. Define
\begin{equation}\label{defCos}
|\Cos (b,c)|:=\Vol (P_c(E))^{1/d}
\end{equation}
The definition is independent of $E$ as we will see in a moment. Recall, if $x: b\to b$ is linear then
\begin{equation}\label{eq-Volume}
\Vol (x(E))= |\det_\R (x)|= |\det (x)|^{d}.
\end{equation}
As the elements in $K$ acts as orthogonal transformations it follows that $|\Cos (k\cdot b,c)|=|\Cos (b,k^{-1}c)|$. In particular, if $b=k\cdot b_o$ and
$c=h\cdot b_o$ with $k,h \in K$, then
\begin{equation}\label{cos3}
|\Cos (b,c)|=|\Cos (h^{-1}k\cdot b_o,b_o)|=\Vol (P_{b_o} (h^{-1}k E))^{1/d}\, .
\end{equation}

As a motivation for the definition assume that $p=1$ and $\K=\R$.
If $b=\R x, c=\R y\in \Gr_{1}(\R)$, then
\[|\Cos (b,c) |=\frac{|(x,y)|}{\|x\|\|y\|}=|\cos (\sphericalangle (x,y))|\]
where $\sphericalangle (x,y)$ denotes the angle between $x$ and $y$.

For reasons that will become transparent in a moment, we identify $\fa_\C^*$ with $\C$ by
\begin{equation}\label{a=C}
\lambda \mapsto \frac{n+1}{pq}\lambda (H_o)
\text{ with inverse } z\mapsto z\, \frac{pq}{n+1}\alpha\, .
\end{equation}
We have $\dim_\R \fn= dpq$. Hence
\[\rho = \frac{dpq}{2}\alpha \longleftrightarrow \frac{d(n+1)}{2}\in\R \, .\]
Then $\rho$ is an integer if and only if $\K=\C, \H$ or $\K=\R$ and $n$ is odd.
For a complex number $\lambda$, we write from now on $\alpha_P(x)^\lambda$ instead of
$\alpha_P(x)^{\lambda \frac{pq}{n+1}\alpha}$.

\begin{theorem}\label{theCos}  
Fix a convex subset $E\subset b_o$ containing zero such that $\Vol (E)=1$. For $x\in G$ and  $\lambda\in\C$ we have
\begin{equation}\label{eqAlpha}
\alpha_P(x)^{\lambda } =|\Vol (P_{b_o}(x(E)))|^{\lambda/d } =|\Cos (x\cdot b_o, b_o)|^{\lambda}\, .
\end{equation}
In particular, if $b=k\cdot b_o$ and $c=h\cdot b_o\in \cB$, then
\begin{equation}\label{eq-alphaCos}
\alpha (h^{-1}k)^\lambda = |\Cos (b,c)|^{\lambda  }\, .
\end{equation}
\end{theorem}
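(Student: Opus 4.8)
The plan is to reduce the whole identity to the upper-left $p\times p$ block of $x$ and to recognize the character defining $\alpha_P$ as an absolute quaternionic determinant. First I would observe that $\alpha_P(x)$ depends on $x$ only through that block. For $x\in\oN P$, write $x$ in the $p,q$ block form of this section, with upper-left block $A\in\GL(p,\K)$. By the factorization (\ref{eq-alphaFactor}) we then have $x=\bar n(Y)\,m(a,b)\,n(X)$ with $a=A$, so the $MAN$-part of $x$ is $m(A,b)n(X)$. Since the character associated with (\ref{a=C}) is a homomorphism of $P$ that is trivial on $M$ and on $N$, it follows that $\alpha_P(x)^\lambda$ equals its value on $m(A,b)$ and hence depends on $x$ only through $A$.

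Next I would compute this value as $|\det_\K A|^\lambda$. The two maps $m(a,b)\mapsto\alpha_P\bigl(m(a,b)\bigr)^\lambda$ and $m(a,b)\mapsto|\det_\K a|^\lambda$ are both characters of $MA=\rS(\GL(p,\K)\times\GL(q,\K))$, and a character of $MA$ that is trivial on $M$ is determined by its restriction to $A$. Both are trivial on $M$: on $L=Z_K(A)$ the blocks are unitary, so $|\det_\K a|=1$; and on $M_o=\exp\fm$ the diagonal blocks $M(X,Y)$ satisfy $\Re\Tr X=0$, whence $|\det_\K a|=e^{\Re\Tr\log a}=1$. They also agree on $A$: for $\exp(tH_o)=m(e^{tq/(n+1)}\rI_p,e^{-tp/(n+1)}\rI_q)$ the normalization $\alpha(H_o)=1$ and (\ref{a=C}) give $\exp(tH_o)^\lambda=e^{t\lambda pq/(n+1)}$, which matches $|\det_\K(e^{tq/(n+1)}\rI_p)|^\lambda=e^{t\lambda pq/(n+1)}$. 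Therefore $\alpha_P(x)^\lambda=|\det_\K A|^\lambda$.

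I would then read off the geometry. For $v$ in $b_o\cong\K^p$ the first $p$ coordinates of $xv$ are $Av$, so the orthogonal projection $P_{b_o}\circ x|_{b_o}$ is exactly the $\K$-linear map $A\colon b_o\to b_o$. Hence $P_{b_o}(x(E))=A(E)$, and (\ref{eq-Volume}) together with $\Vol(E)=1$ and $|\det_\R A|=|\det_\K A|^d$ gives $\Vol(P_{b_o}(x(E)))=|\det_\K A|^d$; raising to the power $\lambda/d$ yields the first equality in (\ref{eqAlpha}). For the second equality I would use that when $x$ acts isometrically on $b_o$ --- in particular when $x\in K$ --- the image $x(E)$ again has unit volume inside $x\cdot b_o$, so by the choice-independence of the definition of $|\Cos|$, equivalently by (\ref{cos3}), the middle term equals $|\Cos(x\cdot b_o,b_o)|^\lambda$. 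Finally, for $b=k\cdot b_o$ and $c=h\cdot b_o$ with $h,k\in K$, I would apply the identity to $x=h^{-1}k\in K$ and invoke the $K$-invariance $|\Cos(k\cdot b_o,h\cdot b_o)|=|\Cos(h^{-1}k\cdot b_o,b_o)|$ to obtain (\ref{eq-alphaCos}).

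The main obstacle is the character computation, specifically checking cleanly and uniformly for $\K=\R,\C,\H$ that $m(a,b)\mapsto|\det_\K a|$ is trivial on $M$. This is where the defining condition of $\fm$ (vanishing of $\Re\Tr$ on the diagonal blocks) and the behaviour of the reduced norm under the exponential, $|\det_\K e^X|=e^{\Re\Tr X}$, are needed; the quaternionic case requires identifying $|\det_\K|$ with the reduced norm. Once this is in place, the remaining steps are bookkeeping with (\ref{eq-Volume}) and the $K$-invariance of $\Cos$.
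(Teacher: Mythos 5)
Your proposal is correct and takes essentially the same route as the paper's proof: the paper likewise reduces everything to the upper-left block via (\ref{eq-alphaFactor}), establishing $\alpha_P(\bar n(Y)m(a,b)n(X))^\lambda=|\det a|^\lambda$ by writing $m(a,b)=m_o\exp(xH_0)$ with $m_o\in M$ (your character argument on $MA$, with its explicit verification of triviality on $Z_K(A)$ and on $M_o$, merely makes visible the step $|\det_\R(\text{$a$-block of }m_o)|=1$ that the paper uses implicitly), and then identifies $P_{b_o}\circ x|_{b_o}$ with the linear map $A$ exactly as you do. Your caveat that the last equality in (\ref{eqAlpha}) requires $x(E)$ to have unit volume in $x\cdot b_o$ --- hence holds for $x\in K$, which is all that (\ref{eq-alphaCos}) and the subsequent applications need --- is in fact slightly more careful than the paper's direct appeal to (\ref{cos3}).
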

\begin{proof} For $z=(z_1,\ldots ,z_p)^t\in \K^p$ we have
\[\begin{pmatrix} A & B\\ C & D\end{pmatrix}
\begin{pmatrix} z\\ 0\end{pmatrix}=\begin{pmatrix} Az\\ Cz\end{pmatrix}\, .\]
Thus
\[P_{b_o}\left(\begin{pmatrix} A & B\\ C & D\end{pmatrix}
\begin{pmatrix} z\\ 0\end{pmatrix}\right)=\begin{pmatrix} Az\\ 0 \end{pmatrix}\, .\]
Thus, by (\ref{cos3}),
\begin{equation}\label{eq-Cosl1}
\left|\Cos \left(\begin{pmatrix} A & B\\ C & D\end{pmatrix}\cdot b_o,b_o\right)\right|^{\lambda}
=|\det_\R A|^{\lambda/d}\, .
\end{equation}

Let $\bar n (Y) m(a,b)n (X)\in \oN P$.
Write $m(a,b)=m_o\exp (x H_0)$ with $m_o\in M$ and  $x\in \R$. Then
\[|\det_\R a |^\lambda =\exp \left(\lambda \frac{xdpq}{n+1}\right)
= \exp (xH_o)^{\lambda \frac{dpq}{n+1}\alpha}=\alpha_P\big(\exp(xH_0)\big)^{d\lambda}\, .\]
Thus
\begin{equation}\label{alphaAndDet}
\alpha_P(\bar n (Y) m(a,b)n(X))^\lambda =
|\det_\R (a)|^{\lambda /d }
=|\det a |^\lambda
\end{equation}
and (\ref{eqAlpha}) follows now from (\ref{eq-Cosl1}).
\end{proof}

This shows in particular that the definition of $\Cos$ is independent of the convex set $E$.
\begin{lemma}\label{le-alphaSym} 
Let $k\in K$. Then $\alpha_P(k)=\alpha_P(k^{-1})$.
\end{lemma}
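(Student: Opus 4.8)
The plan is to extract an explicit formula for $\alpha_P(k)$ from Theorem \ref{theCos} and thereby reduce the identity to a statement about the upper-left block. Write $k=\begin{pmatrix} A & B\\ C & D\end{pmatrix}\in K$ with $A\in\rM(p,\K)$. Both $\alpha_P(k)$ and $\alpha_P(k^{-1})$ are defined exactly when the relevant element lies in $\oN P$, i.e. when the upper-left block is invertible. Since $k\in K=\SU(n+1,\K)$ we have $k^{-1}=k^*=\begin{pmatrix} A^* & C^*\\ B^* & D^*\end{pmatrix}$, whose upper-left block is $A^*$; as $A^*$ is invertible precisely when $A$ is, the two sides are simultaneously defined. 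By (\ref{alphaAndDet}) in Theorem \ref{theCos}, for every $\lambda\in\C$ one has $\alpha_P(k)^\lambda=|\det_\K A|^\lambda$ and $\alpha_P(k^{-1})^\lambda=|\det_\K A^*|^\lambda$, so that $\alpha_P(k)=|\det_\K A|$ and $\alpha_P(k^{-1})=|\det_\K A^*|$ under the identification (\ref{a=C}). Hence the lemma reduces to the single claim $|\det_\K A^*|=|\det_\K A|$.

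The key step is to pass from the (delicate, noncommutative) quaternionic determinant to the real determinant. Using $|\det_\R X|=|\det_\K X|^d$, the claim is equivalent to $|\det_\R A^*|=|\det_\R A|$, where $A$ and $A^*$ are now viewed as $\R$-linear endomorphisms of $\K^p\cong\R^{dp}$. The point I would stress is that, as an $\R$-linear map, $A^*$ is exactly the transpose of $A$ with respect to the standard real inner product on $\R^{dp}$. Indeed, that inner product is $B(z,w)=\Re\langle z,w\rangle$, where $\langle\,\cdot\,,\,\cdot\,\rangle$ denotes the standard $\K$-Hermitian form on $\K^p$; the defining adjoint relation $\langle Az,w\rangle=\langle z,A^*w\rangle$ then gives $B(Az,w)=B(z,A^*w)$. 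Therefore, in the standard orthonormal real basis, the real matrix of $A^*$ is the transpose of the real matrix of $A$, and since a real matrix and its transpose have equal determinants, $\det_\R A^*=\det_\R A$. Combining these steps yields $\alpha_P(k)^\lambda=\alpha_P(k^{-1})^\lambda$ for all $\lambda$, hence $\alpha_P(k)=\alpha_P(k^{-1})$.

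I expect the only genuine obstacle to be the quaternionic case, where $\det_\K$ is the Dieudonn\'e (Study) determinant and manipulating it directly is awkward; the real-transpose observation above is precisely what sidesteps this, since it never uses multiplicativity of $\det_\K$. An alternative, more geometric proof would combine the $K$-equivariance $|\Cos(k\cdot b,c)|=|\Cos(b,k^{-1}c)|$ recorded before (\ref{cos3}) with the symmetry $|\Cos(b,c)|=|\Cos(c,b)|$ of the cosine of two equidimensional subspaces, giving $\alpha_P(k)=|\Cos(k\cdot b_o,b_o)|=|\Cos(b_o,k^{-1}\cdot b_o)|=|\Cos(k^{-1}\cdot b_o,b_o)|=\alpha_P(k^{-1})$; however this would require first establishing the symmetry of $\Cos$ (the standard fact that the products of cosines of principal angles agree in either order), so I would keep the determinant argument as the primary, self-contained route.
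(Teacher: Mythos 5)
Your proof is correct, but it takes a genuinely different route from the paper's. The paper's proof is a one-line structural argument valid for an arbitrary semisimple group: write $k=\bar n m a n\in\oN MAN$ and apply the Cartan involution; since $\theta(k)=k$, $\theta(a)^{-1}=a$, $\theta$ preserves $M$ and swaps $N\leftrightarrow\oN$, one gets $k^{-1}=\theta(k)^{-1}=\theta(n)^{-1}\theta(m)^{-1}a\,\theta(\bar n)^{-1}\in\oN MAN$ with the same $A$-component, whence $\alpha_P(k^{-1})=a=\alpha_P(k)$. You instead work in the matrix realization, using (\ref{eq-alphaFactor}) and (\ref{alphaAndDet}) to reduce the lemma to $|\det_\K A^*|=|\det_\K A|$ for the upper-left block, and then prove that via $|\det_\R X|=|\det_\K X|^d$ together with the observation that $A^*$ is the transpose of $A$ as an $\R$-linear map on $\K^p\cong\R^{dp}$ with respect to $\Re\langle\,\cdot\,,\,\cdot\,\rangle$ --- a clean way to avoid manipulating the Dieudonn\'e determinant over $\H$. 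What each approach buys: the paper's argument is shorter, coordinate-free, and independent of the special form of $G$; yours is longer but self-contained in the matrix picture and makes explicit a point the paper glosses over, namely that $k\in\oN P$ if and only if $k^{-1}\in\oN P$, so both sides of the identity are simultaneously defined. One caution about your suggested geometric alternative: in this paper the symmetry $|\Cos(b,c)|^\lambda=|\Cos(c,b)|^\lambda$ is the \emph{Corollary deduced from} Lemma \ref{le-alphaSym}, so invoking it would be circular unless you prove the symmetry independently (e.g.\ via principal angles), as you correctly note; keeping the determinant argument primary is the right call.
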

\begin{proof} Write $k=\bar n m a n$. Then, as $\theta (k)=k$ and $\theta (a)^{-1}=a$, we have
$k^{-1}=\theta (k)^{-1}=\theta (n)^{-1} \theta (m)^{-1}a \theta (\bar n)^{-1}$
and the claim follows.
\end{proof}
\begin{corollary} Let $b,c\in \cB$. Then $|\Cos (b,c)|^\lambda =|\Cos (c,b)|^\lambda$.
\end{corollary}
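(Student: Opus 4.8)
The plan is to derive the symmetry $|\Cos(b,c)|^\lambda = |\Cos(c,b)|^\lambda$ by combining the identification of $|\Cos|$ with the factor $\alpha_P$ (Theorem \ref{theCos}) with the symmetry property of $\alpha_P$ on $K$ just established in Lemma \ref{le-alphaSym}. Concretely, write $b = k\cdot b_o$ and $c = h\cdot b_o$ with $k,h \in K$. By equation (\ref{eq-alphaCos}) we have $|\Cos(b,c)|^\lambda = \alpha_P(h^{-1}k)^\lambda$, and similarly, interchanging the roles of $b$ and $c$, $|\Cos(c,b)|^\lambda = \alpha_P(k^{-1}h)^\lambda$.

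Thus the corollary reduces to showing that $\alpha_P(h^{-1}k)^\lambda = \alpha_P(k^{-1}h)^\lambda$. The elements $h^{-1}k$ and $k^{-1}h = (h^{-1}k)^{-1}$ both lie in $K$, so this is exactly the content of Lemma \ref{le-alphaSym} applied to the element $g := h^{-1}k \in K$: it gives $\alpha_P(g) = \alpha_P(g^{-1})$, and raising to the power $\lambda$ preserves the identity. This is the heart of the matter, and essentially the whole proof; the remaining work is just bookkeeping with the dictionary (\ref{eq-alphaCos}).

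One subtlety worth a remark is that the factor $\alpha_P$ is only a priori defined on the open dense cell $\oN P$, whereas $h^{-1}k$ need not lie in $\oN P$ for all $k,h$. However, Theorem \ref{theCos} shows $\alpha_P(x)^\lambda = |\Cos(x\cdot b_o, b_o)|^\lambda$ whenever the left side is defined, and the quantity $|\Cos(b,c)|$ is defined unconditionally for all $b,c \in \cB$ via (\ref{defCos}); so the identity of the corollary is really a statement about the everywhere-defined $|\Cos|$, and the passage through $\alpha_P$ is valid on the dense set where both agree. I expect the only real obstacle to be this matter of domains of definition, which is resolved by noting that both sides of the asserted equality are continuous (indeed the $|\Cos|$-version is defined and continuous on all of $\cB \times \cB$), so an identity holding on a dense subset extends to all of $\cB \times \cB$ by continuity.

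In summary, the steps in order are: first, parametrize $b = k\cdot b_o$, $c = h\cdot b_o$ and invoke (\ref{eq-alphaCos}) to convert both $|\Cos(b,c)|^\lambda$ and $|\Cos(c,b)|^\lambda$ into values of $\alpha_P$ on $K$; second, observe that the two relevant group elements are mutually inverse and lie in $K$; third, apply Lemma \ref{le-alphaSym} to conclude their $\alpha_P$-values coincide. The result follows.
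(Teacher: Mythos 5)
Your proof is correct and follows essentially the same route as the paper: parametrize $b=k\cdot b_o$, $c=h\cdot b_o$, convert both sides to $\alpha_P$-values via (\ref{eq-alphaCos}), and apply Lemma \ref{le-alphaSym} to the mutually inverse elements $h^{-1}k$ and $k^{-1}h$ of $K$. Your additional remark on the domain of $\alpha_P$ (handled by density and continuity) is a careful point the paper passes over silently, but it does not change the argument.
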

\begin{proof} Write $b=k\cdot b_o$ and $c=h\cdot b_o$. By (\ref{eq-alphaCos}) and Lemma \ref{le-alphaSym}
\[|\Cos (b,c)|^\lambda =\alpha (h^{-1}k)^{\lambda}=\alpha (k^{-1}h)^\lambda =|\Cos (c,b)|^\lambda\,,\]
which proves the Corollary.
\end{proof}

The $\CosL$-transform $\CL$ is defined for $\Re \lambda >\rho$ by
\begin{equation}\label{def-CosL}
\CL (f)(c):= \int_{\cB} |\CosL (b,c)|^{\lambda -\rho}f(c)\, dc
=\int_K \alpha (k^{-1}h)^{\lambda -\rho} f(h\cdot b_o)\, dk\, \quad c=k\cdot b_o
\end{equation}
where we have used Theorem \ref{theCos}. Thus, $\CL$ is nothing but the standard intertwining operator $J(\lambda )$.
We would like to point out, that the proof of the intertwining property of $J(\lambda )$ shows that $\CL$ is an intertwining operator without the reference to the general theory. But the general theory described in Section \ref{section2} gives us the following theorem for free and allows us to apply the results from \cite{BOO} described in Section \ref{section3}.
\begin{theorem} The following holds:
\begin{enumerate} 
\item The $\CosL$-transform extends to a meromorphic family of intertwining operators $\CL  : C^\infty (\cB)\to C^\infty (\cB)$.
\item Let $c_P(\lambda ):= \CL (1)$. Then $c_P : \C \to \C$ is meromorphic and
\[\mathcal{C}^{-\lambda }\circ \CL = \CL \circ \mathcal{C}^{-\lambda}=c_P(-\lambda )c_P (\lambda)\id\, .\]
\item If $\lambda \in i\R$, then $\frac{1}{c_P (\lambda )}\CL$ is unitary.
\end{enumerate}
\end{theorem}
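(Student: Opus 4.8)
The plan is to recognize that this final theorem is essentially a corollary of the general machinery already developed in Sections~\ref{section2} and~\ref{se-Grassman}, obtained by translating the abstract statements about $J(\lambda)$ into the concrete language of the $\CosL$-transform. The key bridge is equation~(\ref{def-CosL}), which together with Lemma~\ref{lemma-JK} and Theorem~\ref{theCos} shows that on $C^\infty(\cB)$ the operator $\CL$ coincides exactly with the standard intertwining operator $J(\lambda)$, once we identify $\fa_\C^*$ with $\C$ via~(\ref{a=C}). Thus every assertion about $J(\lambda)$ transfers verbatim. First I would state explicitly that $\CL = J(\lambda)$ as operators $C^\infty(\cB)\to C^\infty(\cB)$ for $\Re\lambda>\rho$, so that the remaining work is purely a matter of quoting the earlier results.

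For part~(1), meromorphic extension and the intertwining property, I would invoke Theorem~\ref{thm:VW}: part~(3) of that theorem gives the meromorphic continuation of $\lambda\mapsto J(\lambda)f$ to all of $\fa_\C^*\cong\C$, while part~(4) (equation~(\ref{eq-Intertwining})) gives that each $J(\lambda)$ intertwines $\pi_\lambda$ with $\pi_{-\lambda}^\theta$. Since $\CL=J(\lambda)$, the same holds for the $\CosL$-transform, establishing~(1). For part~(2), I would set $c_P(\lambda):=\CL(1)=J(\lambda)1$, which is meromorphic by~(\ref{cP}) and Theorem~\ref{th-Inverse}; the functional equation $\mathcal{C}^{-\lambda}\circ\CL=\CL\circ\mathcal{C}^{-\lambda}=c_P(-\lambda)c_P(\lambda)\,\id$ is then precisely the identity $J(-\lambda)J(\lambda)=J(\lambda)J(-\lambda)=c_P(\lambda)c_P(-\lambda)\,\id$ of Theorem~\ref{th-Inverse}, after noting that the identification~(\ref{a=C}) sends $\lambda\mapsto-\lambda$ on $\C$ to $\lambda\mapsto-\lambda$ on $\fa_\C^*$. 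For part~(3), the unitarity of $c_P(\lambda)^{-1}\CL$ for $\lambda\in i\R$ follows directly from the corresponding statement in Theorem~\ref{th-Inverse}, since $\lambda\in i\R$ corresponds under~(\ref{a=C}) to $\lambda\in i\fa^*$, the locus where $\pi_\lambda$ is unitary and the normalized intertwining operator is unitary.

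The only genuinely nonroutine point is the bookkeeping in the identification~(\ref{a=C}): I must check that the scalar normalization of $\fa_\C^*\cong\C$ is compatible across all the formulas, so that $\rho\in\fa_\C^*$ really corresponds to the real number $\rho=\tfrac{d(n+1)}{2}$ appearing in the hypothesis $\Re\lambda>\rho$ of~(\ref{def-CosL}), and so that the sign $\lambda\mapsto-\lambda$ used in $\mathcal{C}^{-\lambda}$ matches the $-\lambda$ in $J(-\lambda)$. This is straightforward but is where an error could hide, so I would verify it explicitly by tracing the exponent $\lambda-\rho$ through~(\ref{eqAlpha}) and~(\ref{def-CosL}). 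I expect this compatibility check to be the main (and essentially the only) obstacle; once it is in place, the theorem is an immediate transcription of Theorems~\ref{thm:VW} and~\ref{th-Inverse}.
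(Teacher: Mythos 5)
Your proposal is correct and follows exactly the paper's route: the paper also derives this theorem ``for free'' by identifying $\CL$ with $J(\lambda)$ via (\ref{def-CosL}), Lemma \ref{lemma-JK} and Theorem \ref{theCos}, and then quoting Theorem \ref{thm:VW} and Theorem \ref{th-Inverse}, with the identification (\ref{a=C}) handled by the convention $\alpha_P(x)^\lambda := \alpha_P(x)^{\lambda\frac{pq}{n+1}\alpha}$. Your explicit attention to the normalization bookkeeping (that $\rho\in\fa^*$ corresponds to $d(n+1)/2\in\R$ and that $\lambda\mapsto-\lambda$ is preserved) is exactly the point the paper leaves implicit, and it checks out.
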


\section{The $K$-spectrum of the $\CosL$-Transform}\label{sec:Kspectrum}
\noindent
In this section we determine the $K$-spectrum of $J(\lambda )$ for the Grassmann manifolds 
$\rG_{p}(\K)=\SU(n+1,\K)/\rS (\rU(p,\K)\times \rU (q,\K))\simeq \SL (n+1,\K)/P$. This then determines the $K$-spectrum of the $\cos^\lambda$ transform. As before we always assume that $p\le q$ and $p+q=n+1$. We keep the notation from the previous sections.

The following is well known.

\begin{lemma}
The space $\cB$ is a symmetric space and the corresponding involution is
\[\tau (x)=\begin{pmatrix} \rI_p & 0 \\ 0 & -\rI_q\end{pmatrix}x\begin{pmatrix} \rI_p & 0 \\ 0 & -\rI_q\end{pmatrix}
=\begin{pmatrix} A & - B\\ - C & D\end{pmatrix} \text{ for } x=\begin{pmatrix} A & B\\ C & D\end{pmatrix}\]
and $K^\tau = L$.
\end{lemma}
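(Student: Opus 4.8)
The plan is to verify two separate assertions: first that the given matrix conjugation $\tau$ is precisely the Cartan-type involution realizing $\cB = K/L$ as a symmetric space, and second that its fixed-point set inside $K$ is exactly $L$. Both are direct structural computations, so I would present them as short explicit verifications rather than invoking deep theory.

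First I would establish that $\tau$ is an involutive automorphism of $K=\SU(n+1,\K)$. Writing $\epsilon=\diag(\rI_p,-\rI_q)$, one has $\tau(x)=\epsilon x\epsilon$ with $\epsilon^2=\rI_{n+1}$ and $\det\epsilon=(-1)^q$; since $\epsilon$ is a fixed real matrix commuting appropriately, $\tau(xy)=\epsilon xy\epsilon=(\epsilon x\epsilon)(\epsilon y\epsilon)=\tau(x)\tau(y)$, and $\tau^2=\id$ because $\epsilon^2=\rI$. Moreover $\tau$ preserves $K$: if $x^*=x^{-1}$ then $\tau(x)^*=\epsilon x^*\epsilon=\epsilon x^{-1}\epsilon=\tau(x)^{-1}$, using $\epsilon^*=\epsilon$, and $\det\tau(x)=\det(\epsilon)^2\det x=\det x=1$. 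The block formula $\tau\begin{pmatrix}A&B\\C&D\end{pmatrix}=\begin{pmatrix}A&-B\\-C&D\end{pmatrix}$ is then an immediate multiplication. I would also note the derived involution on $\fk$ is $X\mapsto\epsilon X\epsilon$, whose $+1$-eigenspace is $\fl$ (block-diagonal skew-Hermitian trace-zero matrices) and whose $-1$-eigenspace is the $\fq$ of off-diagonal blocks, matching the decomposition $\fk=\fl\oplus\fq$ used in Section~\ref{section3}.

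Next I would compute the fixed-point group $K^\tau=\{x\in K\mid \tau(x)=x\}$. From the block formula, $\tau(x)=x$ forces $B=-B$ and $C=-C$, hence $B=C=0$; so $x=\diag(A,D)$ is block-diagonal. The constraints $x\in K$ then read $A^*=A^{-1}$, $D^*=D^{-1}$ (so $A\in\rU(p,\K)$, $D\in\rU(q,\K)$) together with $\det_\K A\,\det_\K D=1$, which is exactly the defining condition for $\rS(\rU(p,\K)\times\rU(q,\K))=L$. Thus $K^\tau=L$. Comparing with the explicit description of $L$ recorded earlier in Section~\ref{se-Grassman} closes this direction, and the reverse inclusion $L\subseteq K^\tau$ is clear since block-diagonal matrices are fixed by conjugation by $\epsilon$.

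The mildest technical point, and the only place warranting care, is the quaternionic case: here $\det$ must be read as the (reduced/Dieudonn\'e) determinant on $\rM(n+1,\H)$ discussed at the start of Section~\ref{se-Grassman}, so that the identity $K^\tau=\rS(\rU(p,\K)\times\rU(q,\K))$ with $\rU(\cdot,\H)=\Sp(\cdot)$ is consistent with $K=\SU(n+1,\H)=\Sp(n+1)$; once one works with $|\det_\R x|=|\det_\K x|^d$ the determinant condition is automatic and carries no extra content. I expect no genuine obstacle: every step is an explicit $2\times2$-block matrix manipulation, and the identification of $\cB$ with $K/L=K/K^\tau$ as a symmetric space follows formally once $\tau$ is shown to be an involutive automorphism with fixed-point set $L$.
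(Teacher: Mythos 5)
Your verification is correct, and there is nothing to compare it against line by line: the paper offers no proof of this lemma at all, prefacing it only with ``The following is well known.'' Your explicit computation --- writing $\tau$ as conjugation by $\epsilon=\diag(\rI_p,-\rI_q)$, checking $\tau^2=\id$ and that $\tau$ preserves $K$, reading off $B=C=0$ from the fixed-point equation, and matching the resulting block-diagonal unitary group with the description of $L=\rS(\rU(p,\K)\times\rU(q,\K))$ already recorded in Section \ref{se-Grassman} --- is precisely the standard argument the authors are implicitly invoking, so your proposal supplies the omitted details rather than diverging from the paper. Two points you handle well and should keep: the remark that the derived involution $X\mapsto\epsilon X\epsilon$ has $+1$-eigenspace $\fl$ and $-1$-eigenspace $\fq$, since the paper uses exactly this decomposition $\fk=\fl\oplus\fq$ immediately afterward; and the caveat about the quaternionic determinant, where the condition $\det_\K A\,\det_\K D=1$ is automatic and $K^\tau=\Sp(p)\times\Sp(q)$, consistent with the paper's statement that $L=\Sp(p)\times\Sp(q)$ for $\K=\H$. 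The only cosmetic blemish is the aside $\det\epsilon=(-1)^q$, which needs reinterpretation for $\K=\H$ (Dieudonn\'e determinant) and is in any case not needed, since conjugation-invariance of the determinant already gives $\det\tau(x)=\det x$.
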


The derived involution is given by the same formula. Let $\mathfrak l$ denote the Lie algebra of $L$. Then $\mathfrak k=\mathfrak{l}\oplus \mathfrak{q}$ where
\[\fq=\left\{\left. Q (X)=\begin{pmatrix} 0_{pp} & X\\ -X^* & 0_{qq}\end{pmatrix}\, \right|\, X\in \rM (p\times q,\K)
\right\}\, .\]

Let $E^{(r,s)}_{\nu,\mu}=(\delta_{i\nu}\delta_{j\mu})_{i,j}$ denote the matrix in $\rM(r\times s,\K)$
with all entries equal to $0$ but the $(\nu,\mu)$-th which is equal to $1$.
For $\mathbf{t}=(t_1,\ldots ,t_p)^t\in \R^p$ let 
\begin{align*}
X (\mathbf{t})&=-\sum_{j=1}^p t_j E_{q-p+j,j}^{(p,q)}\,\in \rM(p\times q,\K)\,,\\
Y(\mathbf{t})&= Q(X(\mathbf{t}))         \in \fq\,\\
\diag(\mathbf{t})&= \sum_{j=1}^p t_jE_{j,j}\, \in \rM(p\times p,\K)\,.
\end{align*}
Then $\fb = \{ Y (\mathbf{t})\mid \mathbf{t}\in\R^p\}\simeq \R^p$
is a maximal abelian subspace of $\fq$.

To describe the set $\wKL$ we note first that if $\K=\R$, then $\cB$ is not simply connected. In all other cases $\cB$ is simply connected. Define $\epsilon_j(Y(\mathbf{t})):=t_j$.   We will identify the element $\lambda=\sum_{j=1}^p\lambda_j \epsilon_j$ with the corresponding vector $\mathbf{\lambda
}=(\lambda_1,\ldots ,\lambda_p)$. 

\begin{lemma}
We have
\[\Delta_\fk=\{\pm \epsilon_i\pm \epsilon_j \,(1\le i\not= j\le p\,, \pm \text{independent}), \, \pm \epsilon_i \, (1\le i\le p)\,,\pm 2\epsilon_i \,(1\le i\le p)\, \}\]
with multiplicities respectively $d$ (and not there in case $p=1$), $d(q-p)$ (and not there in case $p=q$) and $d-1$ (and absent if $d=1$).
\end{lemma}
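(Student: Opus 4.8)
The plan is to compute the root system $\Delta_\fk = \Delta(\fb_\C, \fk_\C)$ directly by diagonalizing $\ad(Y(\mathbf{t}))$ on $\fk_\C$, exploiting the explicit block description of $\fk$ and the maximal abelian subspace $\fb$ already fixed in the excerpt. Since $\fk = \fs\fu(n+1,\K)$ consists of skew-Hermitian trace-zero matrices and $\fb$ consists of the off-diagonal blocks $Y(\mathbf{t}) = Q(X(\mathbf{t}))$ pairing the coordinate $j$ with $q-p+j$, the first step is to recognize that the adjoint action of $\fb$ couples only finitely many coordinate directions. I would set up coordinates so that the $p$ ``active'' rows/columns in the lower block are $q-p+1,\dots,q$ (the ones hit by $X(\mathbf{t})$), leaving $q-p$ ``inert'' lower coordinates and the $p$ upper coordinates.

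First I would organize $\fk_\C$ into root spaces according to which pairs of indices a matrix entry connects. The eigenvalues of $\ad(Y(\mathbf{t}))$ are differences/sums of the $t_j$'s, so the roots are $\R$-linear combinations of the $\epsilon_j$. Concretely, entries connecting upper index $i$ to upper index $j$ (and the mirrored lower-lower entries in the paired block) will produce roots $\pm\epsilon_i\pm\epsilon_j$ and $\pm 2\epsilon_i$ on the diagonal-type pieces; entries connecting an upper index to an inert lower index (there are $q-p$ such inert indices) produce the short roots $\pm\epsilon_i$; and the restricted subalgebra spanned by $\fb$ together with the matched blocks behaves like the root system of a rank-$p$ classical algebra. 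The second step is to read off multiplicities by counting, for each root, the real dimension over $\K$ of the corresponding entries and subtracting the constraints (skew-Hermiticity identifies the $(\nu,\mu)$ and $(\mu,\nu)$ entries, and for $2\epsilon_i$ only the imaginary part of a diagonal $\K$-entry survives, giving multiplicity $d-1$).

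The key multiplicity bookkeeping is exactly what produces the stated numbers: $\pm(\epsilon_i\pm\epsilon_j)$ has multiplicity $d = \dim_\R\K$ (absent when $p=1$ since there is no distinct pair); $\pm\epsilon_i$ has multiplicity $d(q-p)$ coming from the $q-p$ inert lower coordinates, each contributing a full $\K$-worth of entries (absent when $p=q$, i.e. $q-p=0$); and $\pm 2\epsilon_i$ has multiplicity $d-1$, arising from the purely imaginary part of the $\K$-valued diagonal entry in the matched block (absent when $d=1$, i.e. $\K=\R$, since then there is no imaginary part). I would verify the count by a dimension check: summing $2\cdot d\binom{p}{2} + 2\cdot p\cdot d(q-p) + 2\cdot p(d-1)$ over positive roots plus $\dim\fb = p$ plus $\dim\fl$ should recover $\dim\fk = \dim\fs\fu(n+1,\K)$, which pins down that no roots have been missed.

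The main obstacle will be the careful dimension accounting in the quaternionic case $\K=\H$, where noncommutativity makes ``$\det$'' and the skew-Hermitian condition subtler, and where the splitting of a diagonal $\H$-entry into its real part (killed by the trace/skew condition) and its three imaginary parts must be handled so as to yield multiplicity $d-1 = 3$ for $2\epsilon_i$ and the full $d=4$ for the $\epsilon_i\pm\epsilon_j$ off-diagonal blocks. This is the step where the excerpt's convention that $\K=\H$ gives $\fk = \f{sp}(n+1)$ is essential: I would cross-check the computed root system against the known restricted root data for the compact symmetric pairs $(\SO(n+1),\rS(\rO(p)\times\rO(q)))$, $(\SU(n+1),\rS(\rU(p)\times\rU(q)))$, and $(\Sp(n+1),\Sp(p)\times\Sp(q))$ from the standard classification (e.g. in \cite{Sig}), which is presumably why the lemma is flagged as ``well known.'' The cleanest route is therefore to identify $(\fk,\fl)$ with one of these classical compact pairs of type BC$_p$ (or C$_p$/D$_p$ in the degenerate cases) and simply transcribe the restricted roots and multiplicities, using the explicit $\fb$ to confirm the labeling by $\epsilon_j$.
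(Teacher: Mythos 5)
Your proposal is correct, and its closing suggestion is in fact the paper's entire proof: the paper disposes of this lemma with a one-line citation to \cite{Sig} (the table on p.~518, the simple root systems on p.~462~ff., and the Satake diagrams on pp.~532--533), i.e.\ precisely the transcription from the classification of compact symmetric pairs that you call the ``cleanest route''. What you do differently is to sketch a self-contained verification by diagonalizing $\ad(Y(\mathbf{t}))$ on $\fk_\C$, which the paper never carries out; this works and buys something the bare citation does not, namely it pins the labeling of the roots to the explicit basis $\epsilon_j$ of the fixed $\fb$, which is what the later computations (the Casimir eigenvalues in Lemma \ref{le-Omegagrassmanian} and the evaluation of $\alpha_P(\exp Y(\mathbf{t}))$ in Lemma \ref{le-Cosl3}) actually use. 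Concretely, $Y(\mathbf{t})$ is a sum of $p$ commuting rotation generators in the planes spanned by the coordinates $j$ and $q+j$; entries joining two distinct such planes give $\pm\epsilon_i\pm\epsilon_j$ with multiplicity $d$ (absent if $p=1$), entries joining a plane to one of the $q-p$ inert coordinates give $\pm\epsilon_i$ with total multiplicity $d(q-p)$ (absent if $p=q$), and the skew-Hermitian $2\times 2$ block over $\K$ supported on a single plane has complexified dimension $3d-2$ with a $d$-dimensional zero-weight space, leaving multiplicity $\frac{1}{2}\bigl((3d-2)-d\bigr)=d-1$ for each of $\pm 2\epsilon_i$ (absent if $\K=\R$). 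Two slips in your bookkeeping, neither fatal: the $\pm 2\epsilon_i$ root vectors are not carried by the imaginary parts of the diagonal entries alone, but by combinations of those with the off-diagonal $(j,q+j)$ entry of the same block (only the count $d-1$ comes out as you state); and in your dimension check the family $\pm(\epsilon_i\pm\epsilon_j)$ contributes $4d\binom{p}{2}$ dimensions, not $2d\binom{p}{2}$ --- with that corrected, the root-space total $4d\binom{p}{2}+2pd(q-p)+2p(d-1)$ plus the centralizer of $\fb$ does recover $\dim\fk$ in all three cases, confirming no roots are missed.
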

 
\begin{proof} This follows from \cite{Sig}: the table on page 518, the description of the simple root systems on page 462 ff. and the Satake diagrams on page 532--533.
\end{proof}

\begin{lemma}\label{le-Omegagrassmanian} Let the notation be as above.
\begin{enumerate}
\item 
If $\K=\R$ and $p=q$, then $$\LB =\{\mu=\sum_{j=1}^pm_j\e_j\mid m_j\in 2\N_0 \text{and} m_1\ge \ldots \ge m_{p-1}\ge |m_p| \;\}\,.$$
In all other cases, $$\LB =\{\mu=\sum_{j=1}^pm_j\e_j\mid m_j\in 2\N_0 \text{and} m_1\ge \ldots \ge m_p\ge 0 \;\}\,.$$
\item $\rho_\fk =\sum_{j=1}^p [\rho -d(j-1)-1]\epsilon_j$.
\item Let $\mu \in \LB$. Then
$\displaystyle{ \omega (\mu )=\frac{pq}{2(n+1)}\sum_{j=1}^p \Big(m_j^2 +2m_j (\rho -d(j-1)-1)\Big)}$.
\end{enumerate}
\end{lemma}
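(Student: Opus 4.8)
The plan is to establish the three parts in the order (2), (3), (1): the half-sum $\rho_\fk$ is purely computational and feeds into Part (3), while Part (1) carries the only conceptual subtlety. \textbf{Part (2).} First I would compute $\rho_\fk=\frac12\sum_{\alpha\in\Delta_\fk^+}m_\alpha\alpha$ directly from the root multiplicities recorded above, using the standard positive system $\Delta_\fk^+=\{\epsilon_i\pm\epsilon_j:i<j\}\cup\{\epsilon_i\}\cup\{2\epsilon_i\}$. Collecting the coefficient of a fixed $\epsilon_k$ in $2\rho_\fk$: the roots $\epsilon_i-\epsilon_j$ contribute $d(p-2k+1)$, the roots $\epsilon_i+\epsilon_j$ contribute $d(p-1)$, the short roots $\epsilon_i$ contribute $d(q-p)$, and the roots $2\epsilon_i$ contribute $2(d-1)$. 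Adding these and using $p+q=n+1$ together with $\rho\leftrightarrow d(n+1)/2$ collapses the coefficient of $\epsilon_k$ in $2\rho_\fk$ to $2\rho-2d(k-1)-2$, i.e. the coefficient in $\rho_\fk$ is $\rho-d(k-1)-1$, giving the claim. No case distinction is needed, since the multiplicities $d(q-p)$ and $d-1$ vanish exactly when the corresponding roots are absent.

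\textbf{Part (3).} Here I would invoke the eigenvalue formula $\omega(\mu)=\langle\mu+2\rho_\fk,\mu\rangle$ of Lemma \ref{le-eigenvalues}, so that the only remaining task is to compute the inner product on $\fb^*$ dual to the one induced on $\fb$ by $\langle X,Y\rangle=\frac{n+1}{pq}\Re\Tr(XY)$. Writing $F_j:=Y(e_j)$ for the standard basis $e_j$ of $\R^p$, the matrix $Y(\mathbf{t})^2$ is block-diagonal with blocks $-X(\mathbf{t})X(\mathbf{t})^*$ and $-X(\mathbf{t})^*X(\mathbf{t})$, from which $-\langle F_j,F_k\rangle=\frac{2(n+1)}{pq}\delta_{jk}$ (the sign reflects that $\langle\cdot,\cdot\rangle$ is negative definite on $\fk$). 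Since $\epsilon_i(F_j)=\delta_{ij}$, the $\epsilon_j$ are the dual basis and the dual positive-definite inner product is $\langle\epsilon_j,\epsilon_k\rangle=\frac{pq}{2(n+1)}\delta_{jk}$. Substituting $\mu=\sum_jm_j\epsilon_j$ and the value of $\rho_\fk$ from Part (2) into $\langle\mu+2\rho_\fk,\mu\rangle$ then produces the stated formula for $\omega(\mu)$.

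\textbf{Part (1).} Finally I would read off $\Lambda^+$ from the conditions $\frac{\langle\mu,\alpha\rangle}{\langle\alpha,\alpha\rangle}\in\N_0$, $\alpha\in\Delta_\fk^+$, splitting according to which roots occur. When $d>1$, i.e. $\K=\C$ or $\H$, the roots $2\epsilon_i$ are present and already force every $m_j\in2\N_0$ (in particular $m_p\ge0$); since $\cB$ is simply connected in these cases the highest-weight map is a bijection $\wKL\cong\Lambda^+=\LB$, and the ordering $m_1\ge\cdots\ge m_p$ comes from the roots $\epsilon_i-\epsilon_j$. When $\K=\R$ the roots $2\epsilon_i$ are absent, so integrality against $\epsilon_i\pm\epsilon_j$ (and $\epsilon_i$, when $p<q$) only forces the $m_j$ to share a common parity; thus $\Lambda^+$ also contains all-odd weights such as $(1,\dots,1)$. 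The extra ingredient is that $\cB$ is now non-simply-connected, so $\LB\subsetneq\Lambda^+$ is the genuine sublattice of $L$-spherical highest weights occurring in $L^2(\cB)$, which for the real Grassmannian $\SO(n+1)/\rS(\rO(p)\times\rO(q))$ are exactly the even ones; I would justify this using the known spherical harmonic analysis of the real Grassmannian together with \cite{GGA,OSMathScand}. The subcase $p=q$, $\K=\R$ must be separated because there the short roots $\epsilon_i$ also disappear, $\Delta_\fk$ is of type $D_p$, and dominance weakens to $m_1\ge\cdots\ge m_{p-1}\ge|m_p|$, which is what permits a negative last coordinate.

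\textbf{Main obstacle.} The computations in Parts (2) and (3) are routine bookkeeping. The real difficulty is isolating the sublattice $\LB$ inside $\Lambda^+$ in the real case: the root-theoretic integrality conditions cannot by themselves distinguish the even weights from the all-odd ones, so it is the non-simply-connectedness of the real Grassmannian and the explicit description of its spherical representations, rather than Lie-algebra bookkeeping, that force the evenness, with the type-$D$ behaviour at $p=q$ requiring separate care.
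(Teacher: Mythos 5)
Your proposal is correct and follows essentially the same route as the paper: part (2) by direct bookkeeping with the root multiplicities, part (3) via Lemma \ref{le-eigenvalues} together with the normalization $-\langle \w X_k,\w X_k\rangle = 2(n+1)/pq$ of $\fb$ (your dual computation $\langle\epsilon_j,\epsilon_k\rangle=\frac{pq}{2(n+1)}\delta_{jk}$ is the same calculation in the dual picture), and part (1) by appeal to the known description of the spherical lattice, which the paper dismisses as ``well known.'' Your expanded discussion of part (1) --- in particular that in the real case the evenness of the $m_j$ is forced by the non-simple-connectedness of $\cB$ and the explicit spherical harmonic analysis rather than by the root-integrality conditions, with the type-$D_p$ modification at $p=q$ --- correctly supplies exactly the details the paper's one-line proof omits.
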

\begin{proof} (1) is well known and (2) follows by a simple calculation using the form of $\Delta_\fk^+$ and the multiplicities. For (3)  let $\w X_1,\ldots, \w X_p\in\fb$ be so that $\e_j (\w X_k)=i\delta_{jk}$.
Then $-\langle \w X_k,\w X_k\rangle = 2(n+1)/pq$. Thus the
$X_k=\sqrt{pq/[2(n+1)]}\, \w X_k$ form an orthonormal basis for $\fb$ and $-\epsilon_j(X_k)^2=pq/[2(n+1)] \delta_{jk}$.
The claim now follows from Lemma \ref{le-eigenvalues}.
\end{proof}

\begin{lemma} Let $\mu = (m_1,\ldots ,m_p)\in \LB$. Then
 \[S (\mu )=\{\mathbf{\mu} \pm 2\epsilon_j\mid j=1,\ldots , p \}\cap \LB\, .\]
These representations occur with multiplicity one.
\end{lemma}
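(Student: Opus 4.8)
The plan is to pin down $S(\mu)$ by combining the $K$-equivariance of $\Phi_\mu$ with a weight computation for $\fs_\C$ and the even-integrality built into $\LB$. First I would record two structural facts. Since $\cB=K/L$ is a symmetric space, every irreducible $K$-type occurs in $L^2(\cB)$ with multiplicity at most one; as $\Im\Phi_\mu$ is a $K$-submodule of $L^2(\cB)$, this immediately gives the last assertion that the representations in $S(\mu)$ occur with multiplicity one. Next, because $\fs$ is irreducible under $\Ad_\fs$ and $H_o\neq 0$, the orbit $\Ad(K)H_o$ spans $\fs$, so $\omega$ is injective and $\omega(\fs_\C)$ is a single irreducible spherical $K$-type isomorphic to $\fs_\C$; among the dominant weights its highest $\fb$-weight is $2\epsilon_1$, so $\omega(\fs_\C)=L^2_{2\epsilon_1}(\cB)$.

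For the inclusion, note that $\Phi_\mu$ is $K$-equivariant, so $\Im\Phi_\mu$ is a quotient of $L^2_\mu(\cB)\otimes\fs_\C\simeq_K V_\mu\otimes\fs_\C$. Working with a Cartan subalgebra of $\fk$ containing $\fb$ and using the standard fact that the highest weights of the irreducible constituents of $V_\mu\otimes\fs_\C$ are of the form $\mu+\nu$ with $\nu$ a weight of $\fs_\C$, every $\sigma\in S(\mu)$ satisfies $\sigma=\mu+\nu$ for a $\fb$-weight $\nu$ of $\fs_\C$. I would then compute these $\fb$-weights directly from the block form: applying $\ad(Y(\mathbf t))$ to the $(p,q)$-block matrices of $\fg_\C$ shows that its eigenvalues are of the shape $\pm t_i\pm t_j$, $\pm t_i$, $\pm 2t_i$ or $0$, uniformly in $\K=\R,\C,\H$; hence the $\fb$-weights of $\fs_\C$ lie in $\{0,\ \pm\epsilon_i,\ \pm2\epsilon_i,\ \pm\epsilon_i\pm\epsilon_j\}$, the same shapes as $\Delta_\fk$. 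Since $\mu$ and every element of $\LB$ have all coordinates in $2\N_0$, the condition $\mu+\nu\in\LB$ forces $\nu$ to have even coordinates, leaving only $\nu\in\{0,\pm2\epsilon_j\}$. Therefore $S(\mu)\subseteq\big(\{\mu\}\cup\{\mu\pm2\epsilon_j:j=1,\dots,p\}\big)\cap\LB$.

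It remains to show that each $\mu\pm2\epsilon_j$ lying in $\LB$ actually occurs. For the extreme neighbor $\mu+2\epsilon_1$ I would invoke Kostant's theorem: the product of an $\fb$-highest-weight vector of $L^2_\mu(\cB)$ with the highest-weight vector of $\omega(\fs_\C)=L^2_{2\epsilon_1}(\cB)$ is a nonzero highest-weight vector of weight $\mu+2\epsilon_1$, so $L^2_{\mu+2\epsilon_1}(\cB)\subseteq\Im\Phi_\mu$ (the Cartan component). The main obstacle is the non-vanishing for the remaining neighbors $\mu-2\epsilon_1$ and $\mu\pm2\epsilon_j$ with $j\ge 2$, which are not Cartan components: here one must either pick the weight vector $v_{\pm2\epsilon_j}\in\fs_\C$ and verify that $\omega(v_{\pm2\epsilon_j})\,\phi_\mu$ has a nonzero $\fb$-highest component of weight $\mu\pm2\epsilon_j$, or compute $\dim\Hom_L(V_\sigma,\,V_\mu\otimes\fs_\C)$ by Frobenius reciprocity to detect the spherical vector; I expect the cleanest route is to reduce the general $j$ to the case $j=1$ by transporting weights with the Weyl group $W_\fk$, checking that the relevant translate remains dominant and spherical.

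Finally, regarding the zero weight $\nu=0$: it can at most reproduce the $K$-type $\mu$ itself, which is not recorded in $S(\mu)$ since it contributes nothing to the recursion of Theorem~\ref{th-3.6}, the ratio (\ref{eq-spectGen3}) degenerating to $1$ when $\sigma=\mu$. Combining the inclusion obtained from the weight count with the non-vanishing of the off-diagonal neighbors then yields the stated equality $S(\mu)=\{\mu\pm2\epsilon_j:j=1,\dots,p\}\cap\LB$, with each occurring with multiplicity one.
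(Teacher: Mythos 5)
Your upper bound is sound, and in one respect more careful than the paper's own argument: restricting to a Cartan subalgebra of $\fk$ containing $\fb$, using that spherical highest weights vanish on the $\fl$-part, and then invoking the evenness of the entries of elements of $\LB$ is precisely the paper's parity argument. (The paper lists only the weights $\pm(\epsilon_i+\epsilon_j)$ of $\fb$ on $\fs_\C$; your fuller list, including $0$, $\pm\epsilon_i$ and $\pm(\epsilon_i-\epsilon_j)$, is the correct one when $q>p$, respectively $p\ge 2$.) The genuine gap is the lower bound, which you yourself flag as ``the main obstacle'': showing that every $\mu\pm2\epsilon_j\in\LB$, and not just the Cartan component $\mu+2\epsilon_1$, actually occurs. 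Your proposed reduction of general $j$ to $j=1$ by ``transporting weights with $W_\fk$'' cannot work as stated: $w(\mu+2\epsilon_1)$ is merely an extreme weight of the single constituent $V_{\mu+2\epsilon_1}$, and conjugating weights by the Weyl group produces no new irreducible summands; likewise a Frobenius-reciprocity count of $\Hom_L(V_\sigma, V_\mu\otimes\fs_\C)$ detects constituents of the abstract tensor product, not of the image of the specific map $\Phi_\mu$. The paper closes exactly this gap by citing Kumar's proof of the PRV conjecture \cite{Kumar88}: for every $w\in W_\fk$ the PRV component with extreme weight $\mu+w(2\epsilon_1)$ occurs in $V_\mu\otimes\fs_\C$, and since the $W_\fk$-orbit of $2\epsilon_1$ contains all $\pm2\epsilon_j$, each dominant representative of $\mu\pm2\epsilon_j$ occurs. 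A short combinatorial step then shows these dominant representatives are again of the stated form: if $\mu+2\epsilon_j$ fails dominance then $m_{j-1}=m_j$, the transposition $(j-1,j)\in W_\fk$ carries $\mu+2\epsilon_j$ to $\mu+2\epsilon_{j-1}$, and iterating lands on a dominant $\mu+2\epsilon_k$ with $k<j$; similarly for $\mu-2\epsilon_j$. Without this input (or some substitute, such as an explicit computation of $\pr_\sigma(\omega(Y)\varphi)$), your argument only yields $S(\mu)\subseteq\bigl(\{\mu\}\cup\{\mu\pm2\epsilon_j\}\bigr)\cap\LB$ together with $\mu+2\epsilon_1\in S(\mu)$.

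A further caution concerns your treatment of the zero weight. Given the paper's definition of $S(\mu)$ as the set of $K$-types of $\Im\Phi_\mu$, saying that $\mu$ is ``not recorded because it contributes nothing to the recursion'' is not a proof; to exclude $\mu$ from $S(\mu)$ one would have to show $\pr_\mu\circ M(\omega(Y))|_{L^2_\mu(\cB)}=0$, and this is in general false: already on $\Gr_1(\R)$, where $\omega(\fs_\C)$ is the space of degree-two spherical harmonics, a Gaunt-coefficient computation shows that the product of two degree-two harmonics has a nonzero degree-two component. The paper sidesteps the issue by omitting the zero weight from its list. You are right that the point is harmless downstream, since $\sigma=\mu$ only produces the trivial relation $\eta_\mu=\eta_\mu$ in Theorem \ref{th-3.6}; but as a proof of the Lemma as stated, the exclusion of $\mu$ requires either this observation made explicit or a vanishing argument, neither of which your dismissal supplies.
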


\begin{proof} A simple matrix calculation shows that the weights of $\fh$ in $\fs_\C$ are $\pm (\epsilon_i +\epsilon_j)$ for $i,j=1,\ldots ,p$. Next we note that $\mu \pm (\epsilon_i+\epsilon_j)$, $j\not= i$, is not in $\LB$ as the $i^{{\rm th}}$ and $j^{{\rm th}}$ entries are not even.  It follows now from \cite{Kumar88} and the fact that each submodule in $V_\mu\otimes \fs_\C$ that occur in $L^2(\cB )$ is spherical that $\sigma\in S (\mu )$ if and only if there exists $j=1,\ldots ,p$ and $w\in W_\fk$ such that $w(\mu \pm 2\epsilon_j)$ is dominant and $\sigma =w(\mu \pm 2\epsilon_j)$. Assume that $\mu +2\epsilon_j$ is not dominant. Then $m_{j-1}<m_j+2$. Hence $m_j=m_{j-1}$. Applying the transposition $(j-1,j)$ shows that $\mu+2\epsilon_j$ is $W_\fk$-conjugate to $\mu+2\epsilon_{j-1}$. Repeating as long as necessary shows that there exists a $k$, $1\le k<j$ such that $\mu +2\epsilon_k$ is dominant and $W_\fk$-conjugate to $\mu+2\epsilon_j$. The case $\mu-2\epsilon_j$ not dominant is handled in similar way.
\end{proof}
\begin{lemma}\label{le-eta1}
Let $\mu =(m_1,\ldots ,m_p)\in \LB$ and $\lambda\in\C\simeq \fa_\C^*$. Then
\begin{equation}\label{eq-eta1}
\frac{\eta_{\mu +2\epsilon_j}(\lambda )}{\eta_\mu (\lambda )}=\frac{\lambda -m_j - \rho +d(j-1)}{\lambda  + m_j + \rho -d(j-1)}\, .
\end{equation}
\end{lemma}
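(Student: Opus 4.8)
The plan is to apply Theorem \ref{th-3.6} with $\sigma = \mu + 2\epsilon_j$, which belongs to $S(\mu)$ by the previous lemma. The first point to settle is the relation between the scalar $r$ occurring in Theorem \ref{th-3.6}, where the parameter is written $\lambda = r\alpha \in \fa_\C^*$, and the complex number $\lambda$ of the present statement. Under the identification (\ref{a=C}) the complex number $\lambda$ corresponds to $\lambda\,\frac{pq}{n+1}\alpha \in \fa_\C^*$, so that $r = \lambda\,\frac{pq}{n+1}$ and $2r = \frac{2pq}{n+1}\,\lambda$.

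Next I would compute $\omega(\sigma) - \omega(\mu)$ from the explicit formula in Lemma \ref{le-Omegagrassmanian}(3). Since $\sigma$ differs from $\mu$ only in the $j$-th entry, where $m_j$ is replaced by $m_j + 2$, only the $j$-th summand contributes, and a short calculation gives
\[\omega(\sigma) - \omega(\mu) = \frac{pq}{2(n+1)}\Big[\,4m_j + 4 + 4\big(\rho - d(j-1) - 1\big)\Big] = \frac{2pq}{n+1}\big(m_j + \rho - d(j-1)\big)\,,\]
the constant $+4$ from $(m_j+2)^2 - m_j^2$ cancelling the $-4$ coming from $4(\rho - d(j-1) - 1)$.

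It then remains to substitute $2r$ and $\omega(\sigma) - \omega(\mu)$ into (\ref{eq-spectGen3}). Both the numerator $2r - (\omega(\sigma)-\omega(\mu))$ and the denominator $2r + (\omega(\sigma)-\omega(\mu))$ carry the common factor $\frac{2pq}{n+1}$, which cancels, yielding
\[\frac{\eta_{\mu+2\epsilon_j}(\lambda)}{\eta_\mu(\lambda)} = \frac{\lambda - m_j - \rho + d(j-1)}{\lambda + m_j + \rho - d(j-1)}\,,\]
exactly the claimed identity. No step presents a genuine obstacle; the only thing requiring care is the consistent bookkeeping of the normalizing factor $\frac{pq}{n+1}$, which enters both through the identification (\ref{a=C}) and through the eigenvalue formula for $\omega$ in Lemma \ref{le-Omegagrassmanian}, and which must cancel cleanly for the stated formula to emerge.
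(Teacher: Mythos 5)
Your proposal is correct and is essentially the paper's own proof, just written out in more detail: both compute $\omega(\mu+2\epsilon_j)-\omega(\mu)=\frac{2pq}{n+1}\bigl(m_j+\rho-d(j-1)\bigr)$ from Lemma \ref{le-Omegagrassmanian}(3) and then substitute into (\ref{eq-spectGen3}) using the identification (\ref{a=C}), $z\mapsto z\frac{pq}{n+1}\alpha$. Your careful bookkeeping of the factor $\frac{pq}{n+1}$, including its cancellation between $2r$ and the eigenvalue difference, is exactly the step the paper leaves implicit.
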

\begin{proof} By Lemma \ref{le-Omegagrassmanian} we get
\[
\omega (\mu +2\epsilon_j)-\omega (\mu)=\frac{2pq}{n+1}\left(m_j +\rho -d(j-1)\right)\, .\]
The claim now follows from (\ref{eq-spectGen3}) and our identification $\C\simeq \fa_\C^*$, $z\mapsto z\frac{pq}{n+1}\alpha$.
\end{proof}

Introduce the Siegel or Gindikin $\Gamma$-function $\Gamma_{p,d}: \C^p\to \C$ by
\begin{equation}\label{def-Gamma}
\Gamma_{p,d} ( \lambda )=\prod_{j=1}^p \Gamma (\lambda_j -\frac{d}{2}(j-1))\, .
\end{equation}
Then $\Gamma (p,d;\lambda )$ is meromorphic with singularities on the union of the hyperplanes
\[\cH (d,j,n)=\{\lambda \in\C^p\mid \lambda_j =-n+\frac{d}{2}(j-1)\}\, ,\quad j=1,\ldots ,p,\,\, n\in \N_0\, .\]
In the following, if $\lambda\in\C$, we will also indicate by $\lambda$ the vector $(\lambda, \ldots ,\lambda)\in\C^p$. It will be clear from the context if we are using $\lambda$ as a complex number or as a vector.
\begin{theorem}\label{th-eta2} 
Let $\lambda\in\C$. Then there exists a meromorphic function $G(\lambda )$ such that for all $\mu\in\LB$ we have
\begin{equation}\label{eq-eta2}
\eta_\mu (\lambda )=(-1)^{|\mu|/2} G(\lambda) \frac{\Gamma_{p,d} \left(\frac{1}{2}(-\lambda+\rho+\mu)\right)}
{\Gamma_{p,d} \left(\frac{1}{2}(\lambda+\rho+\mu)\right)}\,, 
\end{equation}
where $|\mu|=\sum_{j=1}^p m_j$.
\end{theorem}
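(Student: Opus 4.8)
The plan is to reduce the statement to the single-step recurrence of Lemma~\ref{le-eta1} together with the uniqueness clause of Theorem~\ref{th-3.6}. First I would fix $G(\lambda)$ by demanding that (\ref{eq-eta2}) hold at the trivial $K$-type $\mu=0$. Since $\eta_0(\lambda)=c_P(\lambda)$ and $(-1)^0=1$, this forces
\[
G(\lambda):=c_P(\lambda)\,\frac{\Gamma_{p,d}\left(\frac{1}{2}(\lambda+\rho)\right)}{\Gamma_{p,d}\left(\frac{1}{2}(-\lambda+\rho)\right)}\,,
\]
which is meromorphic because $c_P$ and the Siegel $\Gamma$-functions are. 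Writing $\widetilde\eta_\mu(\lambda)$ for the right-hand side of (\ref{eq-eta2}) with this choice of $G$, I then have $\widetilde\eta_0=\eta_0$ by construction, and everything reduces to checking that $\widetilde\eta$ obeys the same transition law as $\eta$.

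The core computation is the verification that
\[
\frac{\widetilde\eta_{\mu+2\epsilon_j}(\lambda)}{\widetilde\eta_\mu(\lambda)}=\frac{\lambda-m_j-\rho+d(j-1)}{\lambda+m_j+\rho-d(j-1)}\,,
\]
which is exactly the ratio in Lemma~\ref{le-eta1}. Replacing $\mu$ by $\mu+2\epsilon_j$ alters only the $j$-th factor in each Siegel product, so the functional equation $\Gamma(z+1)=z\Gamma(z)$ collapses the numerator $\Gamma_{p,d}$-quotient to $\frac{1}{2}(-\lambda+\rho+m_j-d(j-1))$ and the denominator quotient to $\frac{1}{2}(\lambda+\rho+m_j-d(j-1))$; at the same time the sign prefactor produces $(-1)^{(|\mu|+2)/2}/(-1)^{|\mu|/2}=-1$. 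This minus sign is precisely what turns $-\lambda+\rho+m_j-d(j-1)$ in the numerator into $\lambda-m_j-\rho+d(j-1)$, so the two expressions match. This is the only genuine calculation in the proof, and it is routine.

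With the normalization at $\mu=0$ and the recurrence both matched, the uniqueness assertion of Theorem~\ref{th-3.6} finishes the argument: the $K$-spectrum $\{\eta_\mu\}_{\mu\in\LB}$ is determined by (\ref{eq-spectGen3}) (equivalently by Lemma~\ref{le-eta1}) together with $\eta_0=c_P$, and $\widetilde\eta$ satisfies both, so $\widetilde\eta_\mu=\eta_\mu$ for all $\mu$, which is (\ref{eq-eta2}). The hard part is not the algebra but making the inductive reachability explicit: I must confirm that every dominant $\mu=(m_1,\ldots,m_p)$ is joined to $0$ by a chain of \emph{dominant} weights in which each step adds some $2\epsilon_j$. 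Raising the coordinates from the top down---first increasing $m_1$ in steps of $2$, then $m_2$, and so on---keeps every intermediate weight dominant exactly because $m_1\ge\ldots\ge m_p$, and this is already implicit in the final line of the proof of Theorem~\ref{th-3.6}. The only case requiring separate comment is $\K=\R$ with $p=q$, where $\LB$ allows $m_p<0$: there the last coordinate is reached by reading the same recurrence backwards along $\mu\mapsto\mu-2\epsilon_p$, and $(-1)^{|\mu|/2}$ stays well defined since $|\mu|$ remains even.
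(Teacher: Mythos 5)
Your proposal is correct and follows essentially the same route as the paper: denote the right-hand side of (\ref{eq-eta2}) by $\sigma_\mu(\lambda)$, use $\Gamma(z+1)=z\Gamma(z)$ to check that $\sigma_{\mu+2\epsilon_j}/\sigma_\mu$ equals the ratio in Lemma~\ref{le-eta1}, and conclude via the normalization $\eta_0=c_P$ and the uniqueness clause of Theorem~\ref{th-3.6}; your explicit formula for $G(\lambda)$ is exactly the paper's (\ref{eq:G}), which it only records later in Theorem~\ref{th:etaMu}. The extra details you supply---the top-down dominant chain realizing reachability and the backwards step handling $m_p<0$ when $\K=\R$, $p=q$---are refinements of points the paper leaves implicit, not a different argument.
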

\begin{proof} Denote the right hand side of (\ref{eq-eta2}) by $\sigma_\mu (\lambda)$. Using $\Gamma (z+1)=z\Gamma (z)$, we get
\[\sigma_{\mu + 2\epsilon_j}(\lambda )=\sigma_\mu (\lambda )\, \frac{\lambda - m_j -\rho + d(j-1)}{\lambda + m_j + \rho - d(j-1)}\, .\]
Now Lemma \ref{le-eta1} implies that there exists a meromorphic function $G(\lambda )$ with the requires property.
\end{proof}

The function $G(\lambda)$ will be explicitly computed in (\ref{eq:G}) below, by comparing the formula for $\eta_0(\lambda )=c_P(\lambda)$.
For this, we note the following result which 
can be found 
in \cite{AAR}, Theorem 8.1.1.
\begin{lemma}[Selberg's integral] \label{le:SelbergInt}
Let $\gamma_1,\gamma_2, \alpha \in \C$ with $\Re\gamma_1,\Re\gamma_2, \Re\alpha >0$. Then
\begin{eqnarray*}
\lefteqn{\int_0^1\!\!\cdots \!\!\int_0^1\prod_{j=1}^p t_j^{\gamma_1-1}(1-t_j)^{\gamma_2-1}\prod_{1\le i< j\le p}|t_i-t_j|^{2\alpha}\, dt_1\ldots dt_p=}\\
&=&\prod_{j=1}^p\frac{\Gamma(\alpha j +1)\Gamma (\alpha (j-1)+\gamma_1)\Gamma (\alpha (j-1)+\gamma_2)}{\Gamma (\alpha +1)\Gamma (\alpha (p+j-2)+\gamma_1+\gamma_2)}\, .
\end{eqnarray*}
\end{lemma}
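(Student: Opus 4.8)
This statement is Selberg's classical integral evaluation, which we use only as an input; accordingly we merely indicate the strategy, following the treatment in \cite{AAR}. Write $\Phi(t)=\prod_{j=1}^p t_j^{\gamma_1-1}(1-t_j)^{\gamma_2-1}\prod_{i<j}|t_i-t_j|^{2\alpha}$ for the integrand, denote by $S_p(\gamma_1,\gamma_2,\alpha)$ the left-hand side, and call the right-hand side $G_p(\gamma_1,\gamma_2,\alpha)$. Both are holomorphic in the open region $\Re\gamma_1,\Re\gamma_2,\Re\alpha>0$, so it suffices to prove the identity there. The base case $p=1$ is the Euler Beta integral $\int_0^1 t^{\gamma_1-1}(1-t)^{\gamma_2-1}\,dt=\Gamma(\gamma_1)\Gamma(\gamma_2)/\Gamma(\gamma_1+\gamma_2)$, and one checks directly that the $\alpha$-factors in $G_1$ cancel to leave exactly this.

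The engine is an integration-by-parts identity due to Aomoto. For $\Re\gamma_1,\Re\gamma_2>1$ the boundary contributions of $t_1\cdots t_{k-1}(1-t_k)\Phi$ vanish on $\partial([0,1]^p)$, so the integral of $\partial_{t_k}\big(t_1\cdots t_{k-1}(1-t_k)\Phi\big)$ over the cube is zero. Expanding the derivative, using the symmetry of $\Phi$ under permutations of the variables, and sorting terms by how many monomial factors $t_i$ occur, one extracts the recursion
\[
\frac{\int_{[0,1]^p} t_1\cdots t_k\,\Phi\,dt}{\int_{[0,1]^p}\Phi\,dt}=\prod_{j=1}^{k}\frac{\gamma_1+(p-j)\alpha}{\gamma_1+\gamma_2+(2p-j-1)\alpha},\qquad 1\le k\le p,
\]
valid for all admissible parameters by analytic continuation. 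Taking $k=p$ the numerator is $S_p(\gamma_1+1,\gamma_2,\alpha)$, which yields the functional equation
\[
\frac{S_p(\gamma_1+1,\gamma_2,\alpha)}{S_p(\gamma_1,\gamma_2,\alpha)}=\prod_{j=1}^{p}\frac{\gamma_1+(p-j)\alpha}{\gamma_1+\gamma_2+(2p-j-1)\alpha}.
\]
The substitution $t_i\mapsto 1-t_i$ shows $S_p$ is symmetric in $(\gamma_1,\gamma_2)$, giving the analogous shift in $\gamma_2$, and a direct computation with $\Gamma(z+1)=z\Gamma(z)$ shows that $G_p$ obeys exactly the same two shift relations.

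Consequently $S_p/G_p$ is invariant under $\gamma_1\mapsto\gamma_1+1$ and $\gamma_2\mapsto\gamma_2+1$; a growth estimate in a strip (a Liouville/Carlson-type argument) forces it to be independent of $\gamma_1$ and $\gamma_2$, leaving a factor $c_p(\alpha)$ depending only on $p$ and $\alpha$. The genuinely delicate point is that these shift equations move only $\gamma_1,\gamma_2$ and say nothing directly about $\alpha$, so determining $c_p(\alpha)$ requires a second, independent ingredient: either Anderson's change of variables, which evaluates the integral in one stroke, or an induction on $p$ obtained by integrating out a single variable in a degenerate parameter regime, producing a recursion $c_p(\alpha)=c_{p-1}(\alpha)\cdot(\text{explicit }\Gamma\text{-ratio})$. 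Combined with the base case $c_1\equiv 1$ this gives $c_p(\alpha)\equiv 1$ and hence $S_p=G_p$. The two hard steps are thus the combinatorial cancellation in Aomoto's integration by parts and the fixing of the $\alpha$-dependence; the remainder is bookkeeping with the functional equation of $\Gamma$.
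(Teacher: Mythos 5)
Your proposal matches the paper in approach: the paper gives no proof of this lemma at all, simply citing \cite{AAR}, Theorem 8.1.1, and you likewise treat Selberg's evaluation as classical input from the same source. Your appended sketch of Aomoto's argument is moreover accurate --- the shift relation $S_p(\gamma_1+1,\gamma_2,\alpha)/S_p(\gamma_1,\gamma_2,\alpha)=\prod_{j=1}^p\bigl(\gamma_1+(p-j)\alpha\bigr)/\bigl(\gamma_1+\gamma_2+(2p-j-1)\alpha\bigr)$ agrees with the corresponding ratio of the stated right-hand sides after reindexing $j\mapsto p+1-j$, and you correctly flag that the shift equations alone leave an $\alpha$-dependent constant requiring a separate ingredient (Anderson's substitution or Selberg's induction on $p$).
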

To complete the calculation we need to evaluate $\alpha_P (\exp (Y(\mathbf{t})))$. We have
\begin{equation}\label{eq-CosL1}
\exp Y(\mathbf{t})=\begin{pmatrix} \diag(\cos t_1,\ldots ,\cos t_p) & 0& -\diag(\sin t_1,\ldots ,\sin t_p)\\
0 & \rI_{q-p}& 0\\
\diag(\sin t_1,\ldots ,\sin t_p) & 0 & \diag(\cos t_1,\ldots ,\cos t_p)\end{pmatrix}
\end{equation}
where the matrix $\rI_{q-p}=\rI_{n+1-2p}$ does not occur in case $p=(n+1)/2$.

For the following see also \cite{Zhang09}, Lemma 3.2:
\begin{lemma}\label{le-Cosl3} Let $\mathbf{t}\in\R^p$ and $\lambda\in \C$. Then
$\displaystyle \alpha_P(\exp Y(\mathbf{t}))^\lambda =\prod_{j=1}^p|\cos (t_j)|^{\lambda }$.
\end{lemma}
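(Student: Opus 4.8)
The plan is to reduce everything to the explicit matrix (\ref{eq-CosL1}) and then quote Theorem \ref{theCos}. First I would note that the block displayed in (\ref{eq-CosL1}) is written with respect to the partition $p,\,q-p,\,p$ of $n+1$, whereas the parabolic $P$ corresponds to the coarser partition $p,\,q$; since both share the same first block of size $p$, the upper-left $p\times p$ corner of $\exp Y(\mathbf t)$ relevant to the $P$-factorization is exactly $A=\diag(\cos t_1,\ldots,\cos t_p)$. The remaining rows and columns indexed by $p+1,\ldots,q$ enter only through the off-diagonal $p\times p$ blocks $\pm\diag(\sin t_1,\ldots,\sin t_p)$, so they do not affect $A$. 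In particular, for $\mathbf t$ with all $\cos t_j\neq 0$ the block $A$ is invertible, so $\exp Y(\mathbf t)\in\oN P$ by the description of $\oN P$ obtained above.

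Next I would apply Theorem \ref{theCos} in the form (\ref{alphaAndDet}): for any $x\in\oN P$ whose upper-left $p\times p$ block is $A$ one has $\alpha_P(x)^\lambda=|\det_\R A|^{\lambda/d}=|\det A|^\lambda$. Taking $x=\exp Y(\mathbf t)$ and $A=\diag(\cos t_1,\ldots,\cos t_p)$ then reduces the lemma to computing $|\det A|$. Because $A$ is a \emph{real} diagonal matrix, each scalar entry $\cos t_j$ acts on $\K\cong\R^d$ as $\cos t_j\cdot\rI_d$, so in the underlying real representation $|\det_\R A|=\prod_{j=1}^p|\cos t_j|^d$; invoking the relation $|\det_\R x|=|\det_\K x|^d$ recorded at the start of this section gives $|\det A|=\prod_{j=1}^p|\cos t_j|$. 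Raising to the power $\lambda$ yields
\[\alpha_P(\exp Y(\mathbf t))^\lambda=|\det A|^\lambda=\prod_{j=1}^p|\cos t_j|^\lambda,\]
which is the assertion.

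Since (\ref{eq-CosL1}) is already available, the argument is essentially a one-line computation, and I do not expect a genuine obstacle. The only two points that warrant a word of care are the passage from the real determinant to the $\K$-determinant --- which is why I keep absolute values throughout, so that the sign of $\cos t_j$ and, for $\K=\H$, the conventions attached to the Dieudonn\'e determinant never intervene --- and the degenerate case in which some $\cos t_j=0$. In that case $\exp Y(\mathbf t)\notin\oN P$ and $\alpha_P$ is no longer given by the factorization; but both sides of the claimed identity are continuous in $\mathbf t$ where defined and tend to $0$ as $\cos t_j\to 0$ (for $\Re\lambda>0$), so the formula extends to all of $\R^p$ by continuity. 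This closes the remaining boundary case.
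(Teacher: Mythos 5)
Your argument is correct and is essentially the paper's own proof, which likewise combines the explicit matrix (\ref{eq-CosL1}) with the block identification $a=A$ coming from (\ref{eq-alphaFactor}) and the determinant formula (\ref{alphaAndDet}). Your additional treatment of the degenerate case $\cos t_j=0$, where $\exp Y(\mathbf{t})\notin\oN P$, is a point of care the paper passes over silently, but it does not change the route.
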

\begin{proof} This follows from (\ref{eq-CosL1}), (\ref{alphaAndDet}) and (\ref{eq-alphaFactor}).
\end{proof}

\begin{theorem} \label{th-cPlambda}Let $c_P(\lambda )=\int_K \alpha_P(k)^{\lambda -\rho}\,dk=\eta_0(\lambda )=J(\lambda ) 1$. Then
\begin{equation}\label{eq:cP}
c_P(\lambda )= \frac{\Gamma_{p,d}\left(d(n+1)/2\right)}{\Gamma_{p,d} \left(dp/2\right)}\frac{\Gamma_{p,d} \left(\frac{1}{2}(\lambda -\rho + dp)\right)}{ \Gamma_{p,d}\left(\frac{1}{2}(\lambda+\rho )\right)}\, .
\end{equation}
\end{theorem}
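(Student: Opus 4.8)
The plan is to evaluate the defining integral $c_P(\lambda)=\int_K\alpha_P(k)^{\lambda-\rho}\,dk$ by pushing it down to the flat $\fb$ and recognizing a Selberg integral. First I would check that the integrand $k\mapsto\alpha_P(k)^{\lambda-\rho}$ is $L$-bi-invariant: since $L\subset M$, left translation by $l\in L$ merely conjugates the $\oN$-factor and rescales the $M$-factor in the $\oN MAN$-decomposition, leaving $\alpha_P$ fixed, while right $L$-invariance is immediate from the right $MN$-invariance of $\alpha_P$. The Cartan decomposition $K=L\exp(\overline{\fb^+})L$ of the compact symmetric space $\cB=K/L$ and the attendant Weyl integration formula then give
\[ c_P(\lambda)=C\int_{\fb^+}\alpha_P(\exp Y(\mathbf t))^{\lambda-\rho}\prod_{\beta\in\Delta_\fk^+}|\sin\beta(Y(\mathbf t))|^{m_\beta}\,d\mathbf t \]
for a normalization constant $C$, with the multiplicities $m_\beta$ as listed in the lemma on $\Delta_\fk$. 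By Lemma \ref{le-Cosl3} the first factor equals $\prod_{j=1}^p|\cos t_j|^{\lambda-\rho}$, and substituting the roots $\pm\epsilon_i\pm\epsilon_j$, $\pm\epsilon_i$, $\pm2\epsilon_i$ with $\epsilon_j(Y(\mathbf t))=t_j$ turns the density into $\prod_{i<j}|\sin(t_i-t_j)\sin(t_i+t_j)|^d\prod_i|\sin t_i|^{d(q-p)}\prod_i|\sin2t_i|^{d-1}$.

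Next I would substitute $u_j=\sin^2 t_j$. The identity $\sin(t_i-t_j)\sin(t_i+t_j)=\sin^2 t_i-\sin^2 t_j$ converts the off-diagonal factor into the Vandermonde-type product $\prod_{i<j}|u_i-u_j|^d$, while $|\cos t_j|^{\lambda-\rho}=(1-u_j)^{(\lambda-\rho)/2}$, $|\sin t_j|^{d(q-p)}=u_j^{d(q-p)/2}$, $|\sin2t_j|^{d-1}=2^{d-1}\big(u_j(1-u_j)\big)^{(d-1)/2}$, and the Jacobian contributes $\tfrac12 u_j^{-1/2}(1-u_j)^{-1/2}$ per variable. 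Collecting exponents and symmetrizing (the integrand is symmetric in the $u_j$, so the chamber integral is a constant multiple of the integral over $[0,1]^p$), I would obtain, up to a factor independent of $\lambda$, Selberg's integral with parameters $\alpha=d/2$, $\gamma_1=\tfrac d2(q-p+1)$ and $\gamma_2=\tfrac12(\lambda-\rho+d)$. Lemma \ref{le:SelbergInt} then evaluates it, and the only $\lambda$-dependent Gamma factors are $\prod_j\Gamma\big(\tfrac d2(j-1)+\gamma_2\big)$ in the numerator and $\prod_j\Gamma\big(\tfrac d2(p+j-2)+\gamma_1+\gamma_2\big)$ in the denominator. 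A reindexing $j\mapsto p-j+1$, together with $\rho=d(n+1)/2$ and $q=n+1-p$, identifies these with $\Gamma_{p,d}\big(\tfrac12(\lambda-\rho+dp)\big)$ and $\Gamma_{p,d}\big(\tfrac12(\lambda+\rho)\big)$ respectively, so that
\[ c_P(\lambda)=K_0\,\frac{\Gamma_{p,d}\big(\tfrac12(\lambda-\rho+dp)\big)}{\Gamma_{p,d}\big(\tfrac12(\lambda+\rho)\big)} \]
for some $K_0$ not depending on $\lambda$.

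Finally, instead of tracking $C$ and the accumulated powers of $2$, the symmetrization factor and the $\lambda$-independent Selberg terms, I would fix $K_0$ by a single specialization. Because all measures have total mass one, $c_P(\rho)=\int_K\alpha_P(k)^0\,dk=1$; setting $\lambda=\rho$ above and using $\Gamma_{p,d}(\rho)=\Gamma_{p,d}(d(n+1)/2)$ forces $K_0=\Gamma_{p,d}(d(n+1)/2)/\Gamma_{p,d}(dp/2)$, which is precisely the prefactor in (\ref{eq:cP}). I expect the main obstacle to be the first step: invoking the Weyl integration formula on $K/L$ with the correct density and, especially, matching the root multiplicities so that the density acquires exactly the Selberg shape; once the substitution $u_j=\sin^2 t_j$ is made, the identification of Gamma factors is bookkeeping and the normalization $c_P(\rho)=1$ removes every constant at once. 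Convergence holds for $\Re\lambda>\rho$ (where $\Re\gamma_2>0$ and the defining integral converges), and the resulting identity extends to all $\lambda\in\C$ by meromorphic continuation.
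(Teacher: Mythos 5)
Your proposal is correct and follows essentially the same route as the paper: reduce $c_P(\lambda)$ via the Cartan decomposition of $K/L$ to a torus integral with density $\prod_{\beta\in\Delta_\fk^+}|\sin\beta|^{m_\beta}$, recognize Selberg's integral with $\alpha=d/2$, and fix the $\lambda$-independent constant by $c_P(\rho)=1$. The only difference is cosmetic --- you substitute $u_j=\sin^2 t_j$ where the paper uses $u_j=\cos^2 t_j$, which merely swaps $\gamma_1\leftrightarrow\gamma_2$ in the (symmetric) Selberg integral, and you spell out the $L$-bi-invariance and Weyl integration step that the paper leaves implicit.
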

\begin{proof} In the following $c$ will denote a positive constant that might change from one line to another. Its value will then be determined at the end. 
Let $\gamma_1=(\lambda -\rho +d)/2$,  $\gamma_2=d(q-p+1)/2$, $Q=[0,1]^p$, and $Q_\pi =[0,\pi]^p$. Using that $\sin (2u)=2 \cos (u)\sin (u)$ and $|\sin (u-v)\sin (u+v)|= |\cos^2 (u)-\cos^2 (v)|$, we get by the substitution $u_j =\cos^2(t_j)$:
\begin{eqnarray*}
c_P(\lambda )&=&c\int_{Q_\pi}\prod_{j=1}^p|\cos t_j|^{2\gamma_1-1}\sin (t_j)^{2\gamma_2-1}\prod_{i<j}|\cos^2(t_i)-\cos^2(t_j)|^d\,  dt_1\ldots dt_p\\
&=& c\int_{Q_\pi}\prod_{j=1}^p|\cos t_j|^{2\gamma_1-1} (1-\cos^2t_j)^{\gamma_2-1}\sin (t_j)\prod_{i<j}|\cos^2(t_i)-\cos^2(t_j)|^d\,  dt_1\ldots dt_p\\
&=&c \int_Q  \prod_{j=1}^pu_j^{\gamma_1-1 }(1-u_j)^{\gamma_2-1 }\prod_{i<j}|u_i-u_j|^d\, du_1\ldots du_p\\
&=&c\prod_{j=1}^p  \frac{\Gamma \left(\frac{1}{2}(\lambda -\rho + dj)\right)}{ \Gamma\left(\frac{1}{2}(\lambda -\rho + d(q+j))\right)}\\
&=&c\prod_{j=1}^p \frac{ \Gamma \left(\frac{1}{2}(\lambda -\rho +d p - d(j-1))\right)}{ \Gamma\left(\frac{1}{2}(\lambda +\rho -d(j-1))\right)} \, .
\end{eqnarray*}
In the last equation we used that $d(q+j)=d(n+1-p+j)=2\rho -dp +dj$ and that for a complex number $z$
\begin{equation}\label{eq:djto}
\prod_{j=1}^p \Gamma\left(\frac{1}{2}(z +dj)\right)=
\prod_{j=1}^p \Gamma\left(\frac{1}{2}(z+d p -d(j-1))\right)\, .
\end{equation}

Now evaluate $c$ by using that $c_P(\rho) =1$ and recalling that $\rho =d(n+1)/2$.
\end{proof}
\begin{theorem}\label{th:etaMu} The $K$-spectrum the $\CosL$-transform  is given by:
\begin{equation}\label{eq-eta3}
\eta_\mu (\lambda)=(-1)^{|\mu|/2} \; \frac{\Gamma_{p,d}\left(d(n+1)/2\right)}{\Gamma_{p,d} \left(dp/2\right)}\, \frac{\Gamma_{p,d}\left(\frac{1}{2}(\lambda - \rho +dp)\right)\Gamma_{p,d}\left(\frac{1}{2}(-\lambda+\rho+\mu)\right)}{\Gamma_{p,q} \left(\frac{1}{2}(-\lambda+\rho)\right)
\Gamma_{p,d} \left(\frac{1}{2}(\lambda+\rho+\mu)\right)}\, .
\end{equation}
\end{theorem}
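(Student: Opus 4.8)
The plan is to determine the single $\mu$-independent factor $G(\lambda)$ occurring in formula (\ref{eq-eta2}) of Theorem \ref{th-eta2} and then to substitute it back. The essential structural input is already available: Theorem \ref{th-eta2} guarantees that $\eta_\mu(\lambda)$ is the product of $(-1)^{|\mu|/2}$, the Siegel--Gamma ratio $\Gamma_{p,d}\!\left(\tfrac12(-\lambda+\rho+\mu)\right)/\Gamma_{p,d}\!\left(\tfrac12(\lambda+\rho+\mu)\right)$, and a meromorphic function $G(\lambda)$ that does not depend on $\mu$. This $\mu$-independence is precisely what allows $G$ to be pinned down from a single $K$-type, and the convenient choice is the trivial representation $\mu=0$.

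First I would specialize (\ref{eq-eta2}) to $\mu=0$. Since $|0|=0$ and both Gamma factors reduce to $\Gamma_{p,d}\!\left(\tfrac12(-\lambda+\rho)\right)$ and $\Gamma_{p,d}\!\left(\tfrac12(\lambda+\rho)\right)$, this gives
\[
\eta_0(\lambda)=G(\lambda)\,\frac{\Gamma_{p,d}\!\left(\tfrac12(-\lambda+\rho)\right)}{\Gamma_{p,d}\!\left(\tfrac12(\lambda+\rho)\right)}\,.
\]
By Lemma \ref{le-DefEta}(2) one has $\eta_0(\lambda)=c_P(\lambda)$, whose closed form is provided by Theorem \ref{th-cPlambda} in (\ref{eq:cP}). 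Equating the two expressions, the common denominator $\Gamma_{p,d}\!\left(\tfrac12(\lambda+\rho)\right)$ cancels, and solving for $G$ yields
\begin{equation}\label{eq:G}
G(\lambda)=\frac{\Gamma_{p,d}\!\left(d(n+1)/2\right)}{\Gamma_{p,d}\!\left(dp/2\right)}\,\frac{\Gamma_{p,d}\!\left(\tfrac12(\lambda-\rho+dp)\right)}{\Gamma_{p,d}\!\left(\tfrac12(-\lambda+\rho)\right)}\,.
\end{equation}

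Finally I would reinsert (\ref{eq:G}) into (\ref{eq-eta2}): the numerator of $G(\lambda)$ contributes $\Gamma_{p,d}\!\left(\tfrac12(\lambda-\rho+dp)\right)$ together with the constant $\Gamma_{p,d}(d(n+1)/2)/\Gamma_{p,d}(dp/2)$, while its denominator $\Gamma_{p,d}\!\left(\tfrac12(-\lambda+\rho)\right)$ joins the two Gamma factors already present in (\ref{eq-eta2}), and collecting these reproduces (\ref{eq-eta3}) verbatim. I do not anticipate a genuine obstacle here, since the entire analytic difficulty has already been discharged in the Selberg-integral evaluation of $c_P$ (Theorem \ref{th-cPlambda}) and in the recursion of Lemma \ref{le-eta1} underlying Theorem \ref{th-eta2}. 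The only point requiring care is bookkeeping: one must match the arguments of the various $\Gamma_{p,d}$ factors, and in particular read the isolated $\Gamma_{p,q}$ in the denominator of (\ref{eq-eta3}) as a misprint for $\Gamma_{p,d}$, as is forced by consistency with (\ref{eq-eta2}) and (\ref{eq:G}).
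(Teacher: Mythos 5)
Your proposal is correct and coincides with the paper's own proof: the paper likewise specializes (\ref{eq-eta2}) to $\mu=0$, uses $\eta_0=c_P$ together with the Selberg-integral evaluation (\ref{eq:cP}) to solve for $G(\lambda)$, and substitutes back into (\ref{eq-eta2}) to obtain (\ref{eq-eta3}). Your reading of the $\Gamma_{p,q}$ in the denominator of (\ref{eq-eta3}) as a misprint for $\Gamma_{p,d}$ is also the right one, as forced by (\ref{eq-eta2}) and the explicit form of $G(\lambda)$.
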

\begin{proof} By taking $\mu =0$ in (\ref{eq-eta2}) and using $\eta_0=c_P$, we get
\begin{equation}\label{eq:G}
G(\lambda )=\frac{\Gamma_{p,d}\left(\frac{1}{2}(\lambda+\rho)\right)}{\Gamma_{p,d} \left(\frac{1}{2}(-\lambda+\rho)\right)}\, c_P(\lambda)\, . 
\end{equation}
The claim then follows from (\ref{eq-eta2}) and (\ref{eq:cP}).
\end{proof}

\begin{remark}
In Section \ref{section2} we referred to the result of Vogan and Wallach on the meromorphic continuation of the intertwining operator $J(\lambda )$. This result is not needed for the computation of $\eta_\mu (\lambda )$. Indeed, it is enough to know that $J(\lambda )$ is holomorphic on $\fa_\C^*(a_P)$. Then Theorem \ref{th:etaMu} implies the meromorphic continuation of $J(\lambda)=\CL$ at least at the level of $K$-finite functions, cf. \cite{KnappStein2}.
\end{remark}

\begin{remark} 
One can give a more general definition of the $\CosL$-transform. Note first that the definition of $|\Cos (b,c)|^\lambda $ in (\ref{defCos}) is valid for all $b\in \Gr_{p^\prime}(\K)$, $p^\prime \ge p$. For $p^\prime \le p$ one uses the symmetry relation 
$|\Cos (b,c)|^\lambda =|\Cos(b,c)|^\lambda $. For $p\le p^\prime$ define the Radon transform $R_{p^\prime ,p} :C^\infty (\Gr_{p^\prime})\to
C^\infty (\Gr_p)$ by
\[R_{p^\prime,p}(f)(\omega) :=\int_{\omega\subset \eta} f(\eta )\, d\sigma_{p^\prime,\omega}(\eta )\]
where $\sigma_{p^\prime}$ is a well defined probability measure on the set $\{\eta \mid \omega \subset \eta\}$. Then $R_{p^\prime ,p}$ is a $K$-intertwining operator whose $K$-spectrum is independent of $\lambda$. Furthermore, defining $\CL$ by (\ref{def-CosL}), one shows that, with the obvious notation, 
\[\CL_{p^\prime,p}=\CL \circ R_{p^\prime,p}\, .\]
The determination of the $K$-spectrum of the more general $\CosL$-transform is then reduced to the determination of the $\lambda$-independent $K$-spectrum of $R_{p^\prime,p}$ and the determination of the $\lambda$-dependent $K$-spectrum of $\CL$. We note that obviously $R_{p^\prime ,p}$ is zero on all $K$-types $\mu\in\Lambda^+(\cB_{p^\prime})\setminus \Lambda^+(\cB_p)\cap \Lambda^+(\cB_{p^\prime})$, or equivalently, have a nonzero entry $m_j$ with $p<j\le p^\prime$. As our purpose is to relate the $\CosL$-transform with the standard intertwining operators, we do not go further into this interesting topic, but refer to \cite{Zhang09} and the references therein for further discussion.
\end{remark}

\section{The $\mathbf{\SinL}$-transform}\label{se:SinL}
\noindent
In this section we introduce the $\SinL$-transform and determine its $K$-spectrum. We assume that $p=q$. This in particular implies that $n+1$ is even and $p=\frac{n+1}{2}$. If $\omega\in \Gr_p(\K)$ then 
$\omega^\perp$ denotes the orthogonal complement of $\omega$. Define the $\SinL$-transform $\SLa :C^\infty (\cB )\to C^\infty (\cB)$ by
\[\SLa (f)(\omega ):=\CL (f)(\omega^\perp)\, .\] 

This transform has been studied by B. Rubin for more general $p$, see \cite{Rubin2010,Rubin2011} and references therein, and by G. Zhang \cite{Zhang09}. In particular G. Zhang shows, using the $\oN$ realization of the principal series representations, that the $\SinL$-transform is a Knapp-Stein intertwining operator and uses this fact to determine the complementary series for $\pi_\lambda$. 

As $p=(n+1)/2$ we have $H_o=\begin{pmatrix} \frac{1}{2}\rI_p & 0\\ 0 & -\frac{1}{2}\rI_p\end{pmatrix}$.
Let
\[w= \begin{pmatrix} 0 & -\rI_p \\ \rI_p & 0\end{pmatrix}=\exp\left(Y\left(\frac{\pi}{2},\ldots ,\frac{\pi}{2}\right)\right)\, .\]
Then $wH_ow^{-1}=\Ad (w)H_0=-1$. It follows that
$\Ad (w)\oN = N$. 
\begin{lemma} For $f\in C^\infty (\cB)$ define $A (\lambda )f(k\cdot b_o):= J(\lambda )(f)(kw\cdot b_o)=R(w)J(\lambda )f(k\cdot b_o)$, where $R(w)$ denotes the right translation by $w$. Then $A(\lambda ): C^\infty (\cB)\to C^\infty (\cB)$. Then $\lambda \mapsto A(\lambda)$ is a meromorphic family of intertwining operators: $A (\lambda )\circ \pi_{\lambda }(x)= \pi_{-\lambda }(x)A (\lambda )$.
\end{lemma}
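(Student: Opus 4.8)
The plan is to recognize $A(\lambda)=R(w)\circ J(\lambda)$ as the composition of the standard intertwining operator $J(\lambda)$ with the intertwining operator furnished by right translation by the Weyl group representative $w$, and then to read off all three assertions from the properties of $J(\lambda)$ recorded in Theorem~\ref{thm:VW} together with the conjugation relations satisfied by $w$. First I would settle the elementary points. Since $p=q$, we have $w\cdot b_o=b_o^\perp\in\Gr_p(\K)=\cB$, and $w$ normalizes $L$ (the block-swap conjugation $\begin{pmatrix} a&0\\0&b\end{pmatrix}\mapsto\begin{pmatrix} b&0\\0&a\end{pmatrix}$ stays in $L$); hence $kL\mapsto kwL$ is a well-defined diffeomorphism of $\cB$ and $R(w)$ preserves $C^\infty(\cB)$. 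Because $J(\lambda)\colon C^\infty(\cB)\to C^\infty(\cB)$ by Theorem~\ref{thm:VW} and $R(w)$ is independent of $\lambda$, it follows at once that $A(\lambda)$ maps $C^\infty(\cB)$ into itself and that $\lambda\mapsto A(\lambda)f=R(w)J(\lambda)f$ is meromorphic, the meromorphy coming from Theorem~\ref{thm:VW}(3).

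The key step is to show that $R(w)$ intertwines $\pi_{\oP,\lambda}$ and $\pi_{-\lambda}=\pi_{P,-\lambda}$, i.e.\ that right translation by $w$ is the intertwiner attached to the nontrivial restricted Weyl element. Using the relations noted just before the lemma, $\Ad(w)H_o=-H_o$ and $\Ad(w)\oN=N$, together with the fact that $\Ad(w)$ preserves $\fm$, one checks that $w$ conjugates $\oP=MA\oN$ onto $P=MAN$: it fixes $M$, carries $\oN$ to $N$, and acts on $A$ by inversion (since $w\exp(tH_o)w^{-1}=\exp(-tH_o)$). Passing to the induced picture, where $\pi_{\oP,\lambda}$ and $\pi_{P,-\lambda}$ are realized by sections transforming on the right under $\oP$ respectively $P$, I would verify that a section transforming by $a^{-\lambda-\rho_{\oP}}$ under $\oP$ is sent by $R(w)$ to a section transforming by $a^{\lambda-\rho_P}$ under $P$; because $\rho_{\oP}=-\rho_P$ this is precisely the transformation law of $\pi_{P,-\lambda}$, the parameter being negated exactly because $\Ad(w)$ acts by $-1$ on $\fa$. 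A direct computation then yields $R(w)\circ\pi_{\oP,\lambda}(x)=\pi_{-\lambda}(x)\circ R(w)$ for all $x\in G$.

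Finally, since $J(\lambda)$ intertwines $\pi_\lambda=\pi_{P,\lambda}$ with $\pi_{\oP,\lambda}$ (Theorem~\ref{thm:VW}(4), equivalently the identification $\pi_{\oP,\lambda}=\pi^\theta_{-\lambda}$), composing the two intertwining relations gives
\[
A(\lambda)\pi_\lambda(x)=R(w)J(\lambda)\pi_\lambda(x)=R(w)\pi_{\oP,\lambda}(x)J(\lambda)=\pi_{-\lambda}(x)R(w)J(\lambda)=\pi_{-\lambda}(x)A(\lambda),
\]
which is the asserted intertwining property. The main obstacle is the bookkeeping in the second paragraph: one must track the $\rho$-shifts and the inversion action of $\Ad(w)$ on $A$ carefully enough to be sure that $R(w)$ lands in the $\pi_{-\lambda}$-picture (and not the $\pi_\lambda$-picture), and one must keep in mind that the hypothesis $p=q$ is exactly what makes $R(w)$ an operator on $C^\infty(\cB)$ at all, since otherwise $w$ would map $\Gr_p(\K)$ to $\Gr_q(\K)$.
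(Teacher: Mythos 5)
Your proposal is correct and follows essentially the same route as the paper's (much terser) proof: well-definedness of $R(w)$ on $C^\infty(\cB)$ from $w^{-1}Lw=L$, and the intertwining relation $A(\lambda)\pi_\lambda(x)=\pi_{-\lambda}(x)A(\lambda)$ from Theorem~\ref{thm:VW} combined with $\Ad(w)\oN=N$ and $\Ad(w)|_{\fa}=-\id$, with meromorphy inherited from $J(\lambda)$. Your second paragraph simply makes explicit the $\rho$-shift bookkeeping that the paper leaves implicit, and your verification that $R(w)$ carries the $\pi_{\oP,\lambda}$-picture to the $\pi_{P,-\lambda}$-picture is accurate.
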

\begin{proof} We have $w^{-1}Lw=L$ as $\Ad (w)$ normalizes $\fa$. Thus 
$kmw\cdot b_o=kw\cdot b_o$ if $m\in L$ and $k\in K$. Thus $A(\lambda )$ is well defined. The intertwining property follows from Theorem \ref{thm:VW} and the fact that $w^{-1}\oN w=N$ and $\Ad (w)|_{\fa}=-1$.
\end{proof}
\begin{theorem}\label{thm:SinL} If $\lambda\in \fa_\C^* (a_P)$, then $A (\lambda )=\SLa$. 
\end{theorem}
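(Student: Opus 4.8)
The plan is to prove the operator identity by evaluating both sides at an arbitrary point $k\cdot b_o\in\cB$, $k\in K$, against an arbitrary $f\in C^\infty(\cB)$. By the identification $\CL=J(\lambda)$ established in Section \ref{se-Grassman}, the definition of the $\SinL$-transform reads $\SLa(f)(\omega)=\CL(f)(\omega^\perp)=J(\lambda)(f)(\omega^\perp)$, while by definition $A(\lambda)f(k\cdot b_o)=J(\lambda)(f)(kw\cdot b_o)$. Thus the theorem reduces to the purely geometric identity $(k\cdot b_o)^\perp=kw\cdot b_o$ in $\cB=\Gr_p(\K)$.

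To establish this, first I would check the base-point case $w\cdot b_o=b_o^\perp$. Since here $p=q=(n+1)/2$, the matrix $w=\left(\begin{smallmatrix} 0 & -\rI_p \\ \rI_p & 0\end{smallmatrix}\right)$ sends the standard basis by $we_j=e_{p+j}$ for $1\le j\le p$ and $we_{p+i}=-e_i$ for $1\le i\le p$; hence $w\cdot b_o=\K e_{p+1}\oplus\cdots\oplus\K e_{n+1}=b_o^\perp$, where the absence of a middle block (i.e. $n+1-2p=0$) is exactly what makes $w\cdot b_o$ fill out the complement. Next, because $K=\SU(n+1,\K)$ acts on $\K^{n+1}$ by $\K$-unitary transformations (its elements $k$ satisfy $k^*=k^{-1}$), it preserves orthogonality of subspaces, so $(k\cdot b_o)^\perp=k\cdot(b_o^\perp)$. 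Combining the two steps gives $(k\cdot b_o)^\perp=k\cdot(w\cdot b_o)=kw\cdot b_o$, as required.

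Putting this together yields $\SLa(f)(k\cdot b_o)=J(\lambda)(f)(kw\cdot b_o)=A(\lambda)f(k\cdot b_o)$ for every $k\in K$ and $f\in C^\infty(\cB)$, which is the claim. The hypothesis $\lambda\in\fa_\C^*(a_P)$ guarantees that $J(\lambda)$ is given by the absolutely convergent integral (\ref{defJ}), so that all quantities are genuinely defined and the identity holds pointwise; the well-definedness of $A(\lambda)$ (independence of the representative $k$) has already been recorded in the preceding lemma. I do not expect a genuine obstacle: the content is the elementary fact $w\cdot b_o=b_o^\perp$ together with the unitarity of $K$, and the only point requiring care is that the balanced case $p=q$ is precisely what allows a single element $w\in K$ to implement the complementation $\omega\mapsto\omega^\perp$ on $\cB$ — for $p\neq q$ no element of $K$ maps $\Gr_p$ to $\Gr_q$, which is why the $\SinL$-transform is treated only here.
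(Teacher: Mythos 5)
Your proof is correct and takes essentially the same route as the paper's: both reduce the operator identity to the geometric fact $(k\cdot b_o)^{\perp}=kw\cdot b_o$, verify the base case $b_o^{\perp}=w\cdot b_o$ by the explicit computation of $w$ on the standard basis, and use that $K=\SU(n+1,\K)$ acts by orthogonal transformations to transport the identity from $b_o$ to a general point. Your additional remarks on convergence for $\lambda\in\fa_\C^*(a_P)$ and on the well-definedness of $A(\lambda)$ are consistent with what the paper records in the lemma preceding the theorem.
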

\begin{proof} We have by definition
\begin{equation}\label{eq:Al1}
A (\lambda )f (k\cdot b_o)=\int_{K}|\Cos (h\cdot b_o, kw\cdot b_o)|^{\lambda -\rho} f(h)\, dh\, .
\end{equation}
The claim follows if we can show that $(k\cdot b_o)^{\perp} = kw\cdot b_o$. As $\SU (n+1,\K )$ acts by orthogonal transformations it is enough to show that $b_o^\perp = w\cdot b_o$. But thit follows from
\[w (x_1,\ldots ,x_p,0,\ldots , 0)^t=(0,\ldots , 0, x_1,\ldots ,x_p)^t\, .\qedhere \]
\end{proof}

As for the $\CosL$-transform, it follows that for each $\mu \in \LB$ there exists a meromorphic function $\nu_\mu (\lambda )$ such that
\[\SLa|_{L^2_\mu (\cB )}=\nu_\mu (\lambda) \Id_{L^2_\mu (\cB )}\, .\]

As $w^2\in L$ it follows that $R(w)$ defines a unitary $K$-intertwining operator of order two on each of the $K$-types $L^2_\mu (\cB)$. Thus $R(w)|_{L^2_\mu (\cB)}=r (\mu )\id_{L^2_\mu (\cB)}$ where $r (\mu )$ is a constant independent of $\lambda$, $r(\mu )^2=\pm 1$ and 
$\nu_\mu (\lambda )=r(\mu )\eta_\mu (\lambda )$.

\begin{lemma}\label{le:rmu} Let $\mu \in \LB$. Then $r(\mu )=(-1)^{|\mu |/2}$.
\end{lemma}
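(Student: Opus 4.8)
The plan is to compute the scalar $r(\mu)$ directly from its defining property $R(w)|_{L^2_\mu(\cB)}=r(\mu)\id$, i.e.\ by evaluating the action of right translation by $w$ on a single well-chosen vector in each $K$-type. The cleanest way to pin down $r(\mu)$ is to exploit the relation between $\nu_\mu$ and $\eta_\mu$ together with an independent piece of information about the $\SinL$-transform. Since we already know $\nu_\mu(\lambda)=r(\mu)\eta_\mu(\lambda)$ and $r(\mu)^2=1$ (because $w^2\in L$ acts trivially on each spherical $K$-type, the vector $e_\mu$ being $L$-fixed), the entire content is the determination of the sign.

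First I would identify a concrete generator of $L^2_\mu(\cB)$ on which $R(w)$ is easy to evaluate. A natural candidate is the spherical function or, more usefully here, the restriction to $\cB$ of a suitable highest-weight-type matrix coefficient built from $e_\mu$ via $T_\mu$. Because $w=\exp\bigl(Y(\tfrac{\pi}{2},\ldots,\tfrac{\pi}{2})\bigr)$ lies in $K$ and $\Ad(w)H_o=-H_o$, the element $w$ sends $b_o$ to $b_o^\perp$; its action on the weight vectors indexed by $\epsilon_j$ is governed by the explicit matrix in \eqref{eq-CosL1} evaluated at $\mathbf{t}=(\tfrac{\pi}{2},\ldots,\tfrac{\pi}{2})$. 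Each factor $\cos t_j=\cos(\pi/2)=0$ and $\sin t_j=1$, so $w$ implements, up to signs, the swap of the two blocks. The key computation is to track the scalar by which $R(w)$ acts on the lowest-degree (or a distinguished) $L$-fixed function in $L^2_\mu(\cB)$, whose dependence on $\mu=(m_1,\ldots,m_p)$ is through the weight $\mu$ paired against the torus direction rotated by $w$.

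The most efficient route is to reduce to the one-dimensional building blocks: since $\fb\simeq\R^p$ and $w$ acts as the product of the rotations by $\pi/2$ in each of the $p$ coordinate planes $\fg_{\pm2\epsilon_j}$, and since the $K$-type with highest weight $\mu=\sum m_j\epsilon_j$ carries the character whose restriction to the relevant circle subgroups picks up $e^{i m_j\cdot(\pi/2)\cdot 2}$ type phases, the sign factors multiply to $(-1)^{\sum_j m_j/2}=(-1)^{|\mu|/2}$. Concretely I would use Lemma~\ref{le-Cosl3}, which already isolates the $\prod_j|\cos t_j|^\lambda$ behaviour, and note that passing from $\CL$ to $A(\lambda)$ amounts to replacing $\cos t_j$ by $\cos(t_j+\pi/2)=-\sin t_j$, so that the analytic continuation of the eigenvalue picks up exactly one sign per unit of $m_j/2$ in each coordinate. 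Summing the contributions over $j=1,\ldots,p$ gives the exponent $|\mu|/2$.

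The main obstacle will be making the bookkeeping of signs rigorous rather than heuristic: one must verify that the $L$-fixed vector in $L^2_\mu(\cB)$ transforms under $R(w)$ by precisely $\prod_{j=1}^p(-1)^{m_j/2}$ and not by some additional $\mu$-independent phase, and that $w^2\in L$ indeed acts as the identity (forcing $r(\mu)^2=1$ and hence $r(\mu)=\pm1$). I expect this to be handled by choosing an explicit matrix coefficient representative, applying $R(w)$ using the block form of $w$, and comparing leading terms; the parity constraint $m_j\in2\N_0$ from Lemma~\ref{le-Omegagrassmanian} guarantees each exponent $m_j/2$ is an integer so the signs are well defined. Once the sign is computed on the distinguished vector, Schur's Lemma propagates it to all of $L^2_\mu(\cB)$, yielding $r(\mu)=(-1)^{|\mu|/2}$.
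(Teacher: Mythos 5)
Your overall strategy is the right one and is, in essence, the paper's: since $w=\exp\bigl(Y(\tfrac{\pi}{2},\ldots,\tfrac{\pi}{2})\bigr)$ with $Y(\tfrac{\pi}{2},\ldots,\tfrac{\pi}{2})\in\fb$, the scalar $r(\mu)$ should be read off as the phase by which $w$ acts on a distinguished vector in $V_\mu$, governed by the weight $\mu$. But your writeup stops exactly at the step you yourself flag as the obstacle, and the one concrete phase you write down is internally inconsistent: $e^{im_j\cdot(\pi/2)\cdot 2}=e^{i\pi m_j}$ equals $+1$ for every even $m_j$, which would yield $r(\mu)=1$, not the claimed $\prod_j(-1)^{m_j/2}$. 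The correct per-coordinate phase is $e^{\pm i m_j\pi/2}=(-1)^{m_j/2}$ (using $m_j\in 2\N_0$); there is no extra factor of $2$ in the exponent. Likewise, the suggestion of reading the sign off Lemma \ref{le-Cosl3} by replacing $\cos t_j$ with $\cos(t_j+\pi/2)=-\sin t_j$ and following ``the analytic continuation of the eigenvalue'' is not an argument: the eigenvalue $\eta_\mu(\lambda)$ on a $K$-type is not obtained by a coordinatewise substitution in the restriction of the kernel to the torus, so no rigorous sign bookkeeping comes out of that step.

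The device that closes the gap---and dissolves your worry about a possible $\mu$-independent extra phase---is to pair the $L$-fixed vector against a highest weight vector, which is what the paper does. Since $R(w)$ on $L^2_\mu(\cB)$ corresponds via $T_\mu$ to applying $\pi_\mu(w)$ to $e_\mu$, one has $\pi_\mu(w)e_\mu=r(\mu)e_\mu$. Choose a highest weight vector $v_\mu$ normalized so that $(v_\mu,e_\mu)_{V_\mu}=1$; this is possible because the $L$-fixed vector of a spherical representation has nonzero component in the highest weight space. Because $Y(\tfrac{\pi}{2},\ldots,\tfrac{\pi}{2})$ lies in $\fb$, hence in a Cartan subalgebra of $\fk$ on which the highest weight of a spherical representation is supported, $v_\mu$ is an exact eigenvector: $\pi_\mu(w^{-1})v_\mu=e^{-\frac{\pi i}{2}\sum_j m_j}v_\mu$. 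Then $r(\mu)=(v_\mu,\pi_\mu(w)e_\mu)=(\pi_\mu(w^{-1})v_\mu,e_\mu)=e^{-\frac{\pi i}{2}|\mu|}=(-1)^{|\mu|/2}$, with no block-matrix computation, no tracking of individual circle subgroups, and no ambiguity about additional phases; note also that $r(\mu)^2=1$ is immediate since $w^2\in L$ gives $R(w^2)=\id$ on right-$L$-invariant functions. Your plan of ``choosing an explicit matrix coefficient representative and comparing leading terms'' is exactly this computation, but until the pairing with $v_\mu$ is introduced the sign is not actually pinned down, so as written the proposal is a correct plan with the decisive step missing and one computation that contradicts the stated conclusion.
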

\begin{proof} Let $e_\mu\in V_\mu^L$ be as in Section \ref{section2}. Then $\pi_\mu (w)e_\mu =r (\mu )e_\mu$. Let $v_\mu$ be a highest weight vector such that $( v_\mu, e_\mu)_{V_\mu}=1$. Then
\[ r(\mu) =(v_\mu, \pi_\mu (w)e_\mu)
=(\pi_\mu (w^{-1})v_\mu,e_\mu)
= e^{-\frac{\pi i}{2}\sum m_j}(v_\mu, e_\mu)
= (-1)^{|\mu |/2}\, .\qedhere
\]
\end{proof}

\begin{theorem}\label{th:KspectrumSL} The $K$-spectrum of the $\SinL$-transform is given by 
\[\nu_\mu (\lambda )=\frac{\Gamma_{p,d}(\rho )}{\Gamma_{p,d}(\rho/2)}\, \frac{\Gamma_{p,d} \left(\frac{\lambda }{2}\right)\Gamma_{p,d}\left(\frac{1}{2}(-\lambda +\mu +\rho)\right)}{\Gamma_{p,d}\left(\frac{1}{2}(-\lambda +\rho)\right)\Gamma_{p,d}\left(\frac{1}{2}(\lambda +\mu +\rho)\right)}\]
with $\rho =dp$.
\end{theorem}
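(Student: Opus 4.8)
The plan is to assemble the formula for $\nu_\mu(\lambda)$ directly from the identity $\nu_\mu(\lambda) = r(\mu)\,\eta_\mu(\lambda)$ recorded just before the statement, inserting the explicit value of $r(\mu)$ from Lemma \ref{le:rmu} and the explicit value of $\eta_\mu(\lambda)$ from Theorem \ref{th:etaMu}, and then specializing all parameters to the case $p=q$.

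First I would note that $p=q$ forces $n+1=2p$, so that $\rho = d(n+1)/2 = dp$. With this, the parameters appearing in (\ref{eq-eta3}) collapse: the identities $d(n+1)/2 = \rho$ and $dp/2 = \rho/2$ turn the normalizing constant into $\Gamma_{p,d}(\rho)/\Gamma_{p,d}(\rho/2)$, while the first numerator factor simplifies via $\frac{1}{2}(\lambda - \rho + dp) = \frac{1}{2}(\lambda - \rho + \rho) = \lambda/2$ into $\Gamma_{p,d}(\lambda/2)$. The remaining three Gamma factors of (\ref{eq-eta3}), namely those with arguments $\frac{1}{2}(-\lambda+\rho+\mu)$, $\frac{1}{2}(-\lambda+\rho)$, and $\frac{1}{2}(\lambda+\rho+\mu)$, are already in the form demanded by the asserted expression.

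Next I would multiply by $r(\mu) = (-1)^{|\mu|/2}$, supplied by Lemma \ref{le:rmu}. Since (\ref{eq-eta3}) carries a prefactor $(-1)^{|\mu|/2}$ as well, the product of the two signs is $(-1)^{|\mu|}$. By Lemma \ref{le-Omegagrassmanian}(1) every entry $m_j$ of $\mu\in\LB$ is even, so $|\mu| = \sum_j m_j$ is even and $(-1)^{|\mu|} = 1$; both signs therefore cancel. What is left is exactly the claimed expression for $\nu_\mu(\lambda)$, with $\rho = dp$.

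I do not expect a genuine obstacle: the argument is pure bookkeeping once Theorem \ref{th:etaMu} and Lemma \ref{le:rmu} are available. The only step needing a word of justification is the cancellation of the two $(-1)^{|\mu|/2}$ factors, which relies on the parity fact that $|\mu|$ is even for every spherical $K$-type in the $p=q$ setting; I would state this parity explicitly before dropping the sign.
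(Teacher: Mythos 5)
Your proposal is correct and is essentially the paper's own proof, which simply cites Theorem \ref{th:etaMu} and Lemma \ref{le:rmu}; you have merely spelled out the bookkeeping the paper leaves implicit. Your explicit treatment of the sign cancellation, $(-1)^{|\mu|/2}\cdot(-1)^{|\mu|/2}=(-1)^{|\mu|}=1$ because every $m_j$ is even, is exactly the right point to make, and the specializations $\rho=dp$, $d(n+1)/2=\rho$, $dp/2=\rho/2$, and $\tfrac{1}{2}(\lambda-\rho+dp)=\lambda/2$ are all correct.
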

\begin{proof} This follows from Theorem \ref{th:etaMu} and Lemma \ref{le:rmu}. \end{proof}
\bigskip

\noindent 
\textbf{Acknowledgements:} The authors would like to thank B. Rubin for helpful discussions about the $\CosL$-transform, his work and the historical development of the subject. The first named author would also like to thank the harmonic analysis students at LSU to give him the opportunity to give an extended series of lectures on the subject in their student seminar. This was very helpful to clarify the ideas and the presentation.

\end{document}